\newtheorem{theorem}{Theorem}[section]
\newtheorem{lemma}[theorem]{Lemma}
\newtheorem{proposition}{Proposition}[section]
\newtheorem{corollary}{Corollary}[section]
\theoremstyle{definition}
\newtheorem{definition}{Definition}[section]
\newtheorem{remark}{Remark}[section]
\title{Various notions of topological transitivity in non-autonomous and generic dynamical systems}
\date{}
\author{P. Chiranjeevi\\Email: \href{mailto:chiranjeevi.perikala@uohyd.ac.in}{chiranjeevi.perikala@uohyd.ac.in} \\ School of Mathematics and Statistics\\ University of Hyderabad, India \and Rameshwari Gupta\\Email: \href{mailto:20mmpp04@uohyd.ac.in}{20mmpp04@uohyd.ac.in} \\ School of Mathematics and Statistics\\ University of Hyderabad, India}
\begin{document}
	\maketitle
	
	\begin{abstract}
	We consider two types of dynamical systems namely non-autonomous discrete dynamical systems(NDDS) and generic dynamical systems(GDS). In both of them, we study various notions of transitivity.  We give many equivalent conditions for each of these notions and present the implications among these in NDDS and GDS. For a given NDDS, we associate a GDS and discuss whether if the given NDDS has a particular variation of transitivity then the associated GDS also has such a variation and vice versa.
\end{abstract}

\textbf{keywords:}
	Non-autonomous discrete dynamical system,  transitivity, mixing, locally eventually onto, generic dynamical system.

\textbf{MSC Classifications}: 37B55, 37B20.

\section{Introduction}

In this paper, we discuss various notions of transitivity for non-autonomous discrete dynamical systems (NDDS) and for generic dynamical systems (GDS). Our study is along the lines of that which is made for autonomous discrete dynamical systems (ADDS) in \cite{EAN} and for topological dynamical systems (TDS) in \cite{AN}. The systems ADDS, NDDS and TDS are well studied in the literature and in the present paper we introduce GDS. Throughout this paper, we assume that the underlying space is a compact metric space.

A \emph{non-autonomous discrete dynamical system (NDDS)} is a pair $(X,f_{1,\infty})$ where $X$ is a compact metric space and $f_{1,\infty}=(f_n)_{n\in \mathbb{N}}$ is a sequence of continuous self maps on $X$. For $n\in \mathbb{N}$, the composition 
\begin{center}
	$f_1 ^n:= f_n\circ f_{n-1}\circ \cdots \circ f_2\circ f_1$ 
\end{center}is said to be $n^{th}$- iterate of $f_{1,\infty}$.

We denote $(f_1 ^n)^{-1}$ by $f_1 ^{-n}$. i.e., $f_1 ^{-n}= f_1 ^{-1}\circ f_2 ^{-1}\circ \cdots \circ f_n ^{-1}$. \\
The case when $f_{1, \infty}$ is a constant sequence $(f)$, the pair $(X, f_{1, \infty})$ reduces to the classical discrete dynamical system $(X,f )$ which is also known as autonomous discrete dynamical system (ADDS).

Transitivity for NDDS generated by uniformly convergent sequences was well studied in \cite{SR}. Dynamics of non-autonomous systems generated by finite collection of maps was studied in \cite{SI} and \cite{RS}. The authors of \cite{JSC}, \cite{C} and \cite{Si} studied $\omega$-limit sets of non-autonomous discrete dynamical systems. In \cite{SK}, minimality for non-autonomous discrete dynamical systems was addressed.

In \cite{EAN}, the authors provided some equivalent conditions of some dynamical properties for autonomous discrete dynamical systems and such results for topological dynamical systems are in \cite{AN}. In the present paper, we investigate whether such results hold for non-autonomous discrete dynamical systems. For instance, it is proved in \cite{EAN} that an ADDS is topologically transitive if and only if there exists an element whose orbit is dense. In the present paper, we prove that a similar result holds for NDDS also. In other words, we prove that an NDDS is topologically transitive if and only if there exists an element with dense orbit. Since the orbit of an element in an NDDS need not be invariant, a generalization of the orbit, called extended orbit is given in \cite{N}. In this paper, we have introduced a new notion called extended transitivity which is proved to be equivalent to the existance of an element with dense extended orbit. For similar reason, notions like strong extended transitivity and extended minimality are introduced in this paper.

Since the extended transitivity of the system $(X,f_{1,\infty})$ does not depend on sequence structure of $f_{1,\infty}$, instead of saying that $(X,f_{1,\infty})$ is extended transitive, we may say that $(X,\mathcal F)$ is transitive where $\mathcal{F}=\{f_n: n\in\mathbb{N}\}$. This motivates us to consider dynamical systems where $\mathcal{F}$ need not be countable. We call this type of dynamical systems as \emph{generic dynamical systems}. 

Two dynamical systems are identified to be the same through a notion called conjugacy. In the case of autonomous systems, a \emph{topological conjugacy} from $(X,f)$ to $(Y,g)$ is a homeomorphism $\pi : X\rightarrow Y$ such that $\pi \circ f=g\circ\pi$. It follows that $\pi \circ f^n=g^n\circ\pi$ for every $n\in \mathbb{N}$. However, in the case of non-autonomous system $(X,f_{1,\infty})$ and $(Y,g_{1,\infty})$, we need to ensure this as a part of the definition. This can be ensured by assuming that $\pi$ satisfies the condition either $\pi \circ f_1^n =g_1^n \circ\pi $ for every $n\in \mathbb{N}$ or $\pi \circ f_n =g_n \circ \pi $ for every $n\in \mathbb{N}$. If $\pi \circ f_1^n =g_1^n \circ\pi $ for every $n\in \mathbb{N}$, we call $\pi $ a \emph{conjugacy} and if  $\pi \circ f_n =g_n \circ \pi $ for every $n\in \mathbb{N}$, we call $\pi $ a \emph{strong conjugacy}. Notions like toplogical transitivity are preseved under conjugacy and the notions like extended transitivity are preseved under strong conjugacy. Notion of conjugacy  from a GDS $(X,\mathcal{F})$ to $(Y,\mathcal{G})$ is also introduced in this paper. Moreover, we introduce and study the notion of strong conjugacy $(X,\mathcal{F})$ to $(Y,\mathcal{G})$ when $\mathcal{F}, \mathcal{G}$ are assumed to be topolgocal spaces. See Section \ref{conju} for results about conjugacy of non-autonomous systems and Theorems \ref{conjugacy}, \ref{strongconjugacy} for results about conjugacy of generic systems.

Given an NDDS $(X,f_{1,\infty})$ such that $f_m\circ f_n=f_n\circ f_m$ for all $m,n\in\mathbb{N}$. In Theorem \ref{rearrangement1} and \ref{rearrangement2}, we prove that if $(X,f_{1,\infty})$ either topologically transitive or mixing, then $(X,g_{1,\infty})$ also exhibits identical notion for every finite rearrangement $g_{1,\infty}$ of $f_{1,\infty}$. See Theorem \ref{rearrangement3} for a similar result about locally eventually onto systems.

In the last part of the paper, for a given NDDS $(X,f_{1,\infty})$, we associate a suitable GDS $(X,\mathcal{F})$ and discuss whether $(X,f_{1,\infty})$ has a particular variation of transitivity whenever $(X,\mathcal{F})$ also has such a variation. In Theorem \ref{4.19}, we see that $\mathcal{F}=\{f_n: n\in \mathbb{N}\}$ is the appropreate choice for some notions like exetended transitivity. For some notions like very strong transitivity, we see in Theorems \ref{4.20} and \ref{4.21} that $\mathcal{F}=\{f_1^n:n\in\mathbb{N}\}$ is the appropreate choice.

The following is the list of various notions of transitivity that we study in this paper:
\begin{enumerate}[label=\upshape(\roman*), leftmargin=*, widest=iii]
	\item topological transitivity (TT)
	\item extended transitivity
	\item strong transitivity (ST)
	\item strong extended transitivity
	\item very strong transitivity (VST)
	\item extended minimality 
	\item exact transitivity 
	\item strong exact transitivity 
	\item topological mixing (TM)
	\item locally eventually onto (LEO)
\end{enumerate}

For literature on topological transitivity and mixing for non-autonomous discrete dynamical systems see \cite{PK}, \cite{LY}, \cite{MA}, \cite{SD}, \cite{Si}.

\section{Preliminaries}
We begin with some elementary results.\\ 
For a map $f:X\rightarrow Y$ with $A\subset X, B\subset Y$
\begin{align}
	\label{2.1} f(A)\cap B\neq \emptyset\Leftrightarrow A\cap f^{-1}(B)\neq \emptyset \\ 
	\label{2.2}	f(A)\subset B\Leftrightarrow A\subset f^{-1}(B)\Leftrightarrow f^{-1}(B^c)\subset A^c \\ 
	\label{2.3}	\textit{ If $f$ is surjective, then $f^{-1}(B)\subset A\Rightarrow B\subset f(A)$.} 
\end{align}
Now we give some basic definitions and then provide some results related to these definitions.
\begin{definition} \normalfont
	Let $(X,f_{1,\infty})$ be an NDDS.
	\begin{enumerate}[label=\upshape(\roman*), leftmargin=*, widest=iii]
		\item The \emph{orbit} of $x\in X$ is $O(x,f_{1,\infty}) =\{f_1 ^n (x):n\in {\mathbb{N}}\}$.
		For $N\in \mathbb{N}$,
		$O_N(x,f_{1,\infty}) =\{f_1 ^n (x):1\leq n\leq N\}$ is said to be a \emph{partial orbit} of $x$.
		\item The \emph{negative orbit} of $x\in X$ is\\
		$O^- (x,f_{1,\infty})=\{y\in X : f_1 ^{n} (y)=x$ for some $ n\in {\mathbb{N}}\} = \bigcup\limits _{n\in \mathbb{N}} f_1 ^{-n} (x)$.\\
		For $N\in \mathbb{N}$, $O_N^-(x,f_{1,\infty}) = \bigcup\limits _{1\leq n\leq
			N} f_1 ^{-n} (x)$ is said to be a \emph{partial negative orbit} of $x$.
		\item The set of limit points of an orbit $O(x,f_{1,\infty})$ is the \emph{$\omega $-limit set} of $x$, which is denoted as $\omega(x,f_{1,\infty})$, i.e., $\omega(x,f_{1,\infty})=\bigcap\limits_{N\in \mathbb{N}}\overline{\{f_1 ^n (x):n\geq N\}}$, and $O(x,f_{1,\infty})\cup \omega(x,f_{1,\infty})= \overline{O(x,f_{1,\infty})}$ is the orbit closure of $x$.
		\item A point $x\in X$ is called \emph{recurrent} when $x\in \omega(x,f_{1,\infty})$.
	\end{enumerate}
\end{definition}	

\begin{definition} \normalfont
	Let $(X,f_{1,\infty})$ be an NDDS and $A$ be a subset of $X$. Then $A$ is said to be
	\begin{enumerate}[label=\upshape(\roman*), leftmargin=*, widest=iii]
		\item a \emph{+ invariant set} if $f_1 ^n (A)\subset A$ for all $n\in \mathbb{N}$.
		\item a \emph{strong + invariant set} if $f_n (A)\subset A$ for all $n\in \mathbb{N}$.
		\item a \emph{$-$ invariant set} if $f_1 ^{-n} (A)\subset A$ for all $n\in \mathbb{N}$.
		\item a \emph{strong $-$ invariant set} if $f_n ^{-1}(A)\subset A$ for all $n\in \mathbb{N}$.
		\item a \emph{weakly $-$ invariant set} if $A\subset f_1 ^n(A)$ for all $n\in \mathbb{N}$.
		\item an \emph{extended $-$ invariant set} if $A\subset f_n(A)$ for all $n\in \mathbb{N}$.
		\item an \emph{invariant set} if $f_1 ^n (A)=A$ for all $n\in \mathbb{N}$.
	\end{enumerate}
\end{definition}
It is easy to see that $A$ is invariant if and only if $f_n(A)=A$ for all $n\in \mathbb{N}$.

We now prove that the $-$ invariant sets are precisely the complements of + invariant sets.
\begin{lemma}\label{complement}
	For an NDDS $(X,f_{1,\infty})$, a subset $A\subset X$ is + invariant if and only if $A^c$ is $-$ invariant.
\end{lemma}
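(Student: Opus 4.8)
The plan is to reduce the biconditional to the elementary set-theoretic equivalence \eqref{2.2} applied, for each fixed $n \in \mathbb{N}$, to the iterate map $f_1^n : X \to X$. Recall that \eqref{2.2} asserts $f(A) \subset B \Leftrightarrow f^{-1}(B^c) \subset A^c$ for any map $f$ and any sets $A, B$. The idea is to take \emph{both} the source set and the target set in \eqref{2.2} to be the single set $A$ from the lemma, and to let $f = f_1^n$. This is legitimate because each iterate $f_1^n$ is a self-map of $X$, so $A$ is simultaneously a subset of the domain and of the codomain.

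With this substitution, \eqref{2.2} reads $f_1^n(A) \subset A \Leftrightarrow (f_1^n)^{-1}(A^c) \subset A^c$, and since the notation $f_1^{-n}$ was defined to mean $(f_1^n)^{-1}$, the right-hand side is exactly $f_1^{-n}(A^c) \subset A^c$. Thus, for every individual $n$, the inclusion defining $+$ invariance at level $n$ for $A$ is equivalent to the inclusion defining $-$ invariance at level $n$ for $A^c$.

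The final step is merely to quantify over $n$: by definition $A$ is $+$ invariant precisely when $f_1^n(A) \subset A$ holds for all $n \in \mathbb{N}$, which by the per-$n$ equivalence above holds precisely when $f_1^{-n}(A^c) \subset A^c$ holds for all $n$, i.e.\ exactly when $A^c$ is $-$ invariant. Since every step is a genuine equivalence, the two conditions coincide, proving the lemma.

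I do not anticipate any real obstacle; the entire content is bookkeeping in matching the roles of the sets in \eqref{2.2}. The only points requiring a moment of care are confirming that the same set $A$ may legitimately play the role of both $A$ and $B$ in \eqref{2.2} (which is fine because all maps involved are self-maps of $X$), and recalling that $f_1^{-n}$ denotes the full preimage $(f_1^n)^{-1}$ rather than a composition requiring invertibility of the individual $f_i$.
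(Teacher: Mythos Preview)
Your proposal is correct and takes essentially the same approach as the paper: both arguments reduce the statement to the equivalence \eqref{2.2} applied with $f = f_1^n$ and $A = B$, then quantify over $n$. The paper writes out the two directions separately while you invoke the biconditional form of \eqref{2.2} directly, but the content is identical.
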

\begin{proof}
	Assume $A$ is + invariant, i.e., $f_1^n(A)\subset A$ and so $f_1^{-n}(A^c) \subset A^c$ for all $n\in \mathbb{N}$ by equation (\ref{2.2}). Hence $A^c$ is $-$ invariant. Now suppose that $A^c$ is $-$ invariant. Thus, $f_1^{-n}(A^c)\subset A^c$ and so $f_1^n(A)\subset A$ for all $n\in \mathbb{N}$ by equation (\ref{2.2}). Therefore $A$ is + invariant.
\end{proof}
The next lemma says that, the strong $-$ invariant sets are precisely the complements of strong + invariant sets.
\begin{lemma}
	For an NDDS $(X,f_{1,\infty})$, a subset $A\subset X$ is strong + invariant if and only if $A^c$ is strong $-$ invariant.
\end{lemma}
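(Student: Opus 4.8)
The plan is to mimic the proof of Lemma \ref{complement} verbatim, replacing the iterate $f_1^n$ by the individual map $f_n$, since the only tool used there is the elementary equivalence (\ref{2.2}), which holds for an arbitrary map. The definitions reduce the statement to a purely set-theoretic claim about each $f_n$: $A$ is strong $+$ invariant means $f_n(A)\subset A$ for all $n\in\mathbb{N}$, and $A^c$ is strong $-$ invariant means $f_n^{-1}(A^c)\subset A^c$ for all $n\in\mathbb{N}$. So it suffices to show that, for each fixed $n$, the condition $f_n(A)\subset A$ is equivalent to $f_n^{-1}(A^c)\subset A^c$, and then quantify over $n$.

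First I would fix $n\in\mathbb{N}$ and apply the chain of equivalences in (\ref{2.2}) with the map taken to be $f_n$, the set $A$ as is, and $B=A$ (so that $B^c=A^c$). The relevant portion of (\ref{2.2}) is $f(A)\subset B \Leftrightarrow f^{-1}(B^c)\subset A^c$, which upon this substitution reads exactly $f_n(A)\subset A \Leftrightarrow f_n^{-1}(A^c)\subset A^c$. This single application already gives the per-$n$ equivalence with no further manipulation.

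To finish, I would run the quantifier over all $n$ in both directions. For the forward direction: assuming $A$ is strong $+$ invariant, we have $f_n(A)\subset A$ for every $n$, hence $f_n^{-1}(A^c)\subset A^c$ for every $n$ by the equivalence above, so $A^c$ is strong $-$ invariant. For the converse: assuming $A^c$ is strong $-$ invariant, we have $f_n^{-1}(A^c)\subset A^c$ for every $n$, hence $f_n(A)\subset A$ for every $n$ by the same equivalence, so $A$ is strong $+$ invariant.

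I do not anticipate any genuine obstacle here: unlike Lemma \ref{complement}, where one must be mindful that $f_1^{-n}$ denotes the composition $f_1^{-1}\circ\cdots\circ f_n^{-1}$, the present statement involves only the single maps $f_n$ and their preimages, so (\ref{2.2}) applies cleanly term by term and no commutation or composition bookkeeping is needed. The only point worth stating explicitly for clarity is that the equivalence in (\ref{2.2}) holds for each individual $f_n$ regardless of surjectivity, so the argument is symmetric and both implications come from the same line.
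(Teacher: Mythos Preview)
Your proposal is correct and is exactly the approach the paper itself indicates: it states that the proof is similar to that of Lemma~\ref{complement} with details omitted, and your substitution of $f_n$ for $f_1^n$ together with a direct appeal to (\ref{2.2}) is precisely that similarity spelled out.
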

\begin{proof}
	The proof is similar to that of Lemma \ref{complement}. We omit the details.
\end{proof}
The next lemma gives an equivalent condition for a set to be weakly $-$ invariant.
\begin{lemma}\label{3}
	For an NDDS $(X,f_{1,\infty})$, a subset $A\subset X$ is weakly $-$ invariant if and only if $f_1 ^n (A\cap f_1 ^{-n}(A))=A$, for all $n\in \mathbb{N}$.
\end{lemma}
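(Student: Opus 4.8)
The plan is to reduce the ``for all $n$'' equivalence to a single set-theoretic identity applied to each iterate $g := f_1^n$, and then read off both directions at once. Concretely, I would first isolate the purely combinatorial fact that for any continuous self-map $g$ of $X$ and any subset $A\subset X$,
\[
g\bigl(A\cap g^{-1}(A)\bigr)=A\cap g(A).
\]
This identity needs no invariance hypothesis and no surjectivity of $g$; it is what does all the work, after which the lemma is immediate.

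For the identity itself, both inclusions are short. For $\subseteq$, any $y\in g(A\cap g^{-1}(A))$ has the form $y=g(x)$ with $x\in A$ and $x\in g^{-1}(A)$, so $y\in g(A)$ and $y=g(x)\in A$, giving $y\in A\cap g(A)$. For $\supseteq$, if $y\in A\cap g(A)$ we may choose $x\in A$ with $g(x)=y$; since $y\in A$ we then have $g(x)\in A$, i.e.\ $x\in g^{-1}(A)$, hence $x\in A\cap g^{-1}(A)$ and $y=g(x)\in g(A\cap g^{-1}(A))$. The only substantive point is that the constraint $x\in g^{-1}(A)$ is automatically met once $g(x)$ is forced to lie in $A$, so intersecting with $g^{-1}(A)$ costs nothing beyond cutting the image $g(A)$ down to $A\cap g(A)$.

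Applying the identity with $g=f_1^n$ yields $f_1^n\bigl(A\cap f_1^{-n}(A)\bigr)=A\cap f_1^n(A)$ for every $n\in\mathbb{N}$. Now $A\cap f_1^n(A)=A$ holds precisely when $A\subset f_1^n(A)$. Hence $f_1^n(A\cap f_1^{-n}(A))=A$ for all $n$ if and only if $A\subset f_1^n(A)$ for all $n$, which is exactly the definition of $A$ being weakly $-$ invariant, completing the proof. I do not anticipate a genuine obstacle here; the one thing to keep in view is that $g=f_1^n$ need not be onto, so $g(g^{-1}(A))$ may be a proper subset of $A$. This is the structural reason the statement is phrased with the intersection $A\cap f_1^{-n}(A)$ rather than a bare preimage or image, and it is precisely what the identity above makes transparent.
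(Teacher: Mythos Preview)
Your proof is correct and is essentially the paper's argument repackaged: the paper also establishes both inclusions by the same element-chasing you carry out, declaring sufficiency trivial and spelling out necessity for each fixed $n$. Your isolation of the hypothesis-free identity $g(A\cap g^{-1}(A))=A\cap g(A)$ is a tidy way to get both directions at once, but the underlying steps coincide with those in the paper.
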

\begin{proof}
	We prove the necessity part and the sufficiency part is trivial. To prove the necessity part, assume $A$ is weakly $-$ invariant and fix $n\in \mathbb{N}$. Then $A\subset f_1^n(A)$ and so $f_1^n(A\cap f_1^{-n}(A))\subset f_1^n(A)\cap A=A$. Fix $a\in A$. Since $A\subset f_1^n(A)$, there exists $b\in A$ such that $f_1^n(b)=a$. Then $b\in f_1^{-n}(A)$ which implies that $b\in A\cap f_1^{-n}(A)$. As $a=f_1^n(b)\in f_1^n(A\cap f_1^{-n}(A))$, we get $A\subset  f_1^n(A\cap f_1^{-n}(A))$. Hence $A=  f_1^n(A\cap f_1^{-n}(A))$, for all $n\in \mathbb{N}$. 
\end{proof}
The next result gives an equivalent condition for a set to be extended $-$ invariant.
\begin{lemma}
	For an NDDS $(X,f_{1,\infty})$, a subset $A\subset X$ is extended $-$ invariant if and only if $f_n (A\cap f_n ^{-1}(A))=A$ for all $n\in \mathbb{N}$.
\end{lemma}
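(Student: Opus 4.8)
The plan is to mirror the argument used for Lemma~\ref{3}, since the statement is the exact analogue obtained by replacing the iterate $f_1^n$ with the single map $f_n$ and the weak $-$ invariance hypothesis $A\subset f_1^n(A)$ with the extended $-$ invariance hypothesis $A\subset f_n(A)$. The sufficiency direction is immediate: if $f_n(A\cap f_n^{-1}(A))=A$ for all $n$, then in particular $A=f_n(A\cap f_n^{-1}(A))\subset f_n(A)$, so $A$ is extended $-$ invariant. Thus the work lies entirely in the necessity direction, which I would establish by proving the two inclusions $f_n(A\cap f_n^{-1}(A))\subset A$ and $A\subset f_n(A\cap f_n^{-1}(A))$ separately, for a fixed but arbitrary $n\in\mathbb{N}$.

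For the first inclusion I would use the elementary set-theoretic facts $f_n(B\cap C)\subset f_n(B)\cap f_n(C)$ and $f_n(f_n^{-1}(A))\subset A$ together with the hypothesis. Concretely, $f_n(A\cap f_n^{-1}(A))\subset f_n(A)\cap f_n(f_n^{-1}(A))\subset f_n(A)\cap A$, and since $A\subset f_n(A)$ by extended $-$ invariance we have $f_n(A)\cap A=A$, which gives $f_n(A\cap f_n^{-1}(A))\subset A$.

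For the reverse inclusion I would argue pointwise. Fix $a\in A$. Because $A\subset f_n(A)$, there exists $b\in A$ with $f_n(b)=a$. Since $f_n(b)=a\in A$ we have $b\in f_n^{-1}(A)$, so $b\in A\cap f_n^{-1}(A)$ and therefore $a=f_n(b)\in f_n(A\cap f_n^{-1}(A))$. Combining the two inclusions yields $f_n(A\cap f_n^{-1}(A))=A$ for every $n\in\mathbb{N}$.

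I do not anticipate a genuine obstacle here; the only point requiring care is the reverse inclusion, where one must explicitly invoke the extended $-$ invariance to produce a preimage $b$ lying inside $A$ (rather than merely in $X$), which is precisely what places $b$ in $A\cap f_n^{-1}(A)$. Every other step is a routine manipulation of images and preimages, so once this preimage-selection is set up correctly the proof reduces to bookkeeping, exactly as in Lemma~\ref{3}.
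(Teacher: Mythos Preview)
Your proposal is correct and is exactly the approach the paper takes: the paper's proof simply says to replace $f_1^n$ by $f_n$ in the argument of Lemma~\ref{3}, and your write-up carries out precisely that substitution, including the same two-inclusion argument for the necessity direction and the same observation that sufficiency is immediate.
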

\begin{proof}
	By replacing $f_1^n$ with $f_n$ in the proof of Lemma \ref{3}, we can easily obtain the proof the above lemma.
\end{proof}

\begin{remark}
	In an NDDS $(X,f_{1,\infty})$,
	\begin{enumerate}[label=\upshape(\roman*), leftmargin=*, widest=iii]
		\item the orbit of an element need not be + invariant.
		\item the negative orbit of an element need not be $-$ invariant.
		\item a subset $A\subset X$ is + invariant if and only if  $O(x,{f_{1,\infty}})\subset A$ for every $x\in A$.
		\item a subset $A\subset X$ is $-$ invariant if and only if  $O^-(x,{f_{1,\infty}})\subset A$ for every $x\in A$.
		\item the $\omega$-limit set of an element need not be + invariant. (For examples see \cite{Si}).
	\end{enumerate}
\end{remark}
Let us now consider the set of words of finite lenght with alphabet $\mathbb{N}$: $\sum:=\{\alpha=(n_1, n_2,\dots, n_p):n_i \in \mathbb{N}, p\in \mathbb{N}\}$, and for a given sequence $(f_n)$ of continuous self maps on $X$ and $\alpha=(n_1, n_2,\dots, n_p)\in\sum $, let us denote $f_{\alpha}:= f_{n_p}\circ f_{n_{p-1}}\circ\cdots \circ f_{n_1}$.
\begin{definition} \normalfont
	The \emph{extended orbit} of $y\in X$ is $J(y)=\{f_{\alpha}(y):\alpha\in\sum\}$. For any nonempty subset $\sum'\subset \sum$, we write $J_{\sum'}(y)=\{f_{\alpha}(y):\alpha\in\sum'\}$.\\
	The set of limit points of the extended orbit $J(x)$ of an element $x\in X$ is called the \emph{extended $\omega$-limit set} of $x$, and it is denoted by  $\omega_e(x,f_{1,\infty})$. The set $J(x)\cup \omega_e(x,f_{1,\infty})= \overline{J(x)}$ is the extended orbit closure of $x$.
\end{definition}
\begin{definition} \normalfont
	The \emph{extended negative orbit} of $y\in X$ is\\
	$J^- (y)=\{x\in X:f_{\alpha}(x)=y$, for some $ {\alpha}\in \sum\}$. For any nonempty subset $\sum'\subset \sum$, we write $J_{\sum'}^-(y)=\{x\in X:f_{\alpha}(x)=y,$ for some $\alpha\in\sum'\}$.
\end{definition}
In the case of ADDS, the orbit of an element is + invariant but in the case of NDDS, the orbit $O(x,f_{1,\infty})$ need not be + invariant. We will see in the following proposition that the extended orbit of every element is strong + invariant. 
\begin{proposition}\label{P1}
	The extended orbit $J(y)$ of any $y\in X$ has the following properties:
	\begin{enumerate}[label=\upshape(\roman*), leftmargin=*, widest=iii]
		\item $O(y,f_{1,\infty})\subset J(y)$.
		\item The set $J(y)$ is strong + invariant.
		\item A subset $A\subset X$ is strong + invariant if and only if $J(y)\subset A$ for every $y\in A$.
	\end{enumerate}
\end{proposition}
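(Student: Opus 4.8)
The plan is to exploit the defining feature of the extended orbit: composing on the left by any map $f_m$ from the family corresponds to appending the letter $m$ to the end of the word $\alpha$, so that $J(y)$ is manifestly closed under such compositions. All three parts then follow by unwinding the definition $f_\alpha = f_{n_p}\circ f_{n_{p-1}}\circ\cdots\circ f_{n_1}$, with no analytic input and only a little care about the composition convention.

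For part (i), I would observe that the $n^{th}$ iterate $f_1^n = f_n\circ f_{n-1}\circ\cdots\circ f_1$ is precisely $f_\alpha$ for the special word $\alpha = (1,2,\dots,n)\in\sum$. Hence every orbit point $f_1^n(y)$ is of the form $f_\alpha(y)$ and lies in $J(y)$, which gives $O(y,f_{1,\infty})\subset J(y)$. For part (ii), I would fix $m\in\mathbb{N}$ and an arbitrary $z = f_\alpha(y)\in J(y)$ with $\alpha = (n_1,\dots,n_p)$. Then $f_m(z) = f_m\circ f_{n_p}\circ\cdots\circ f_{n_1}(y) = f_\beta(y)$, where $\beta = (n_1,\dots,n_p,m)\in\sum$ is again a finite word over $\mathbb{N}$. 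Thus $f_m(z)\in J(y)$, so $f_m(J(y))\subset J(y)$ for every $m$; that is, $J(y)$ is strong + invariant.

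For part (iii), the reverse implication follows from the single-letter words: if $J(y)\subset A$ whenever $y\in A$, then for any $y\in A$ and any $n\in\mathbb{N}$ the choice $\alpha = (n)$ gives $f_n(y) = f_\alpha(y)\in J(y)\subset A$, whence $f_n(A)\subset A$ and $A$ is strong + invariant. For the forward implication I would argue by induction on the length $p$ of $\alpha$: assuming $A$ is strong + invariant and $y\in A$, the point $f_{n_1}(y)$ lies in $A$ because $f_{n_1}(A)\subset A$, and applying $f_{n_2},\dots,f_{n_p}$ one at a time keeps the image in $A$ at each stage, so $f_\alpha(y)\in A$ for every $\alpha\in\sum$, giving $J(y)\subset A$.

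None of the three parts poses a genuine obstacle; the only point requiring attention is the bookkeeping with the composition order in $f_\alpha$—specifically that applying a new map \emph{last} corresponds to appending its index at the \emph{right} end of $\alpha$. This is exactly what makes the closure argument in (ii) and the one-step induction in (iii) go through, and it is also why the relevant words in (i) are the increasing blocks $(1,2,\dots,n)$ rather than arbitrary ones.
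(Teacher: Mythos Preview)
Your argument is correct in all three parts and the bookkeeping with the composition order in $f_\alpha$ is handled carefully; nothing is missing. The paper itself does not give a proof of this proposition but simply refers the reader to \cite{N}, so your self-contained verification from the definitions---identifying $f_1^n$ with $f_{(1,2,\dots,n)}$, closing $J(y)$ under each $f_m$ by appending the letter $m$, and the one-step induction on word length---is exactly the kind of direct argument one would expect and is entirely adequate here.
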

\begin{proof}
	For the proof we refer the reader to \cite{N}.
\end{proof}
Again in an NDDS, the negative orbit $O^-(x,f_{1,\infty})$ need not be $-$ invariant. We will see in the following result that the extended negative orbit is strong $-$ invariant. 
\begin{proposition}\label{P2}
	The extended negative orbit $J^-(y)$ of any $y\in X$ has the following properties:
	\begin{enumerate}[label=\upshape(\roman*), leftmargin=*, widest=iii]
		\item $O^-(y,f_{1,\infty})\subset J^-(y)$.
		\item The set $J^-(y)$ is strong $-$ invariant.
		\item A subset $A\subset X$ is strong $-$ invariant if and only if  $J^-(y)\subset A$ for every $y\in A$.
	\end{enumerate}
\end{proposition}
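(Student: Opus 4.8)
The plan is to handle the three parts in turn by pure bookkeeping with words $\alpha\in\sum$ under the composition convention $f_\alpha=f_{n_p}\circ\cdots\circ f_{n_1}$. The single observation that organizes everything is the reflexive relation $x\in J^-(y)\iff y\in J(x)$, which is immediate since both statements say $f_\alpha(x)=y$ for some $\alpha\in\sum$. This already exhibits Proposition \ref{P2} as the mirror image of Proposition \ref{P1}, so I expect each part to follow either by a short word manipulation or by dualizing the corresponding part of Proposition \ref{P1}. For (i), I would observe that $f_1^n=f_\alpha$ for the word $\alpha=(1,2,\dots,n)$, so any $x$ with $f_1^n(x)=y$ has $f_\alpha(x)=y$ and hence lies in $J^-(y)$; thus $O^-(y,f_{1,\infty})\subset J^-(y)$.

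For (ii) I must show $f_n^{-1}(J^-(y))\subset J^-(y)$ for every $n$. Given $x$ with $f_n(x)\in J^-(y)$, I would pick $\alpha=(n_1,\dots,n_p)$ with $f_\alpha(f_n(x))=y$ and prepend the letter $n$: for $\beta=(n,n_1,\dots,n_p)$ the composition convention yields $f_\beta=f_\alpha\circ f_n$, whence $f_\beta(x)=y$ and $x\in J^-(y)$. The only point to watch here is that it is prepending (not appending) $n$ that produces $f_\alpha\circ f_n$ under the stated ordering.

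For (iii), the backward implication is the easy single-letter case: if $J^-(y)\subset A$ for every $y\in A$ and $f_n(x)\in A$, then taking $\alpha=(n)$ shows $x\in J^-(f_n(x))\subset A$, so $f_n^{-1}(A)\subset A$ and $A$ is strong $-$ invariant. The forward implication carries the real (though modest) work: assuming $A$ strong $-$ invariant and $f_\alpha(x)=y\in A$ with $\alpha=(n_1,\dots,n_p)$, I would set $x_0=x$, $x_k=f_{n_k}(x_{k-1})$, so $x_p=y\in A$, and then peel the word off from the right by a finite downward induction --- $x_p\in A$ and $f_{n_p}^{-1}(A)\subset A$ force $x_{p-1}\in A$, and repeating forces $x_0=x\in A$. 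This induction along the length of $\alpha$, keeping the composition order straight, is the main obstacle.

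Finally, I would note a duality shortcut for (iii) that avoids the induction: the condition ``$J^-(y)\subset A$ for every $y\in A$'' says precisely that no point of $A^c$ is carried into $A$ by any $f_\alpha$, i.e.\ $J(x)\subset A^c$ for every $x\in A^c$; by Proposition \ref{P1}(iii) this is exactly the assertion that $A^c$ is strong $+$ invariant, which is equivalent to $A$ being strong $-$ invariant since strong $-$ invariant sets are the complements of strong $+$ invariant sets. I would likely present the direct argument for self-containedness and record this duality as the conceptual reason the proposition holds.
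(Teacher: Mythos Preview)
Your proof is correct and follows the same approach as the paper, which merely states that the result is ``straightforward from the definition of extended negative orbit'' without giving details. Your write-up simply makes explicit the word manipulations and the short induction that the paper leaves to the reader, and your duality observation via Proposition~\ref{P1} is a valid alternative justification of part~(iii).
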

\begin{proof}
	The proof is straightforward from the definition of extended negative orbit.
\end{proof}

In an autonomous discrete dynamical system $(X,f)$, if $f$ is surjective and $A$ is either invariant or weakly $-$ invariant subset of $X$,  then $\overline{A}$ satisfies the corresponding property. If $f$ is open surjective map then interior of any + invariant subset of $X$ is also + invariant and closure of any $-$ invariant subset of $X$ is $-$ invariant. For the proof of these results see \cite{EAN}. It is natural to ask whether similar results hold in the case of NDDS.
The following two lemmas give an affirmative answer. 

\begin{lemma}\label{5}
	If $f_n$ is surjective for every $n\in \mathbb{N}$ and $A$ is a subset of $X$, then the following statements hold:
	\begin{enumerate}[label=\upshape(\roman*), leftmargin=*, widest=iii]
		\item If $A$ is $-$ invariant, then $A$ is weakly $-$ invariant.
		\item  The set $A$ is invariant if and only if $A$ is both + invariant and weakly $-$ invariant. Also, $A=f_1^{-n}(A)$ for all $n\in \mathbb{N}$ if and only if $A$ is both + invariant and $-$ invariant, in that case, it is invariant.
		\item If $A$ is either + invariant, weakly $-$ invariant or invariant, then $\overline A$ satisfies the corresponding property.
		\item If $f_n$ is open map for every $n\in \mathbb{N}$ and if $A$ is + invariant then $A^{\circ} $ is + invariant. Also, if $A$ is $-$ invariant then $\overline A$ is $-$ invariant.
	\end{enumerate}
\end{lemma}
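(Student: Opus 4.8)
The plan is to treat the four parts in order, exploiting that under the hypotheses each iterate $f_1^n = f_n \circ \cdots \circ f_1$ inherits the relevant regularity: it is surjective (a composition of surjections), continuous (a composition of continuous maps), and, when the $f_n$ are open, open as well. The main tools will be the set-theoretic equivalences (\ref{2.2}) and (\ref{2.3}), Lemma \ref{complement}, and three standard facts, namely that $f(\overline{A}) \subset \overline{f(A)}$ for continuous $f$, that the continuous image of a closed (hence compact) subset of the compact space $X$ is closed, and that open maps send open sets to open sets.

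For part (i), I would apply (\ref{2.3}) directly with $f = f_1^n$ and $B = A$: since $f_1^n$ is surjective, the hypothesis $f_1^{-n}(A) \subset A$ ($-$ invariance) immediately yields $A \subset f_1^n(A)$ (weak $-$ invariance) for each $n$. For part (ii), the first equivalence is a pure double-inclusion observation: being invariant, i.e. $f_1^n(A) = A$, is exactly the conjunction of $f_1^n(A) \subset A$ (+ invariance) and $A \subset f_1^n(A)$ (weak $-$ invariance). For the second equivalence I would use (\ref{2.2}) to rewrite $A \subset f_1^{-n}(A)$ as $f_1^n(A) \subset A$; thus $A = f_1^{-n}(A)$ for all $n$ says precisely that $A$ is both $-$ invariant ($f_1^{-n}(A) \subset A$) and + invariant ($A \subset f_1^{-n}(A)$). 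To conclude that such an $A$ is invariant, I would invoke part (i): $-$ invariance upgrades to weak $-$ invariance, which together with + invariance gives $f_1^n(A) = A$.

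Part (iii) splits by case. If $A$ is + invariant, continuity gives $f_1^n(\overline{A}) \subset \overline{f_1^n(A)} \subset \overline{A}$. If $A$ is weakly $-$ invariant, I would argue that $\overline{A}$ is compact, so its continuous image $f_1^n(\overline{A})$ is closed; since $A \subset f_1^n(A) \subset f_1^n(\overline{A})$, this closed set containing $A$ must contain $\overline{A}$, giving $\overline{A} \subset f_1^n(\overline{A})$. The invariant case then follows by feeding these two conclusions back through part (ii). This weakly $-$ invariant step is the one place where compactness of $X$ is genuinely needed, and it is the only nontrivial point of the whole lemma.

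Finally, for part (iv), if $A$ is + invariant and the $f_n$ are open, then $f_1^n$ is open, so $f_1^n(A^{\circ})$ is an open set contained in $f_1^n(A) \subset A$ and is therefore contained in the largest open subset $A^{\circ}$ of $A$; hence $A^{\circ}$ is + invariant. For the closure statement I would route through complements rather than argue directly: if $A$ is $-$ invariant then $A^c$ is + invariant by Lemma \ref{complement}, so the interior $(A^c)^{\circ} = (\overline{A})^c$ is + invariant by the part just proved, and Lemma \ref{complement} again turns this into $-$ invariance of $\overline{A}$. The passage to complements here is the only mild bookkeeping subtlety in part (iv).
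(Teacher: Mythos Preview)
Your proof is correct and follows essentially the same route as the paper's: parts (i) and (ii) via the set identities (\ref{2.2}) and (\ref{2.3}); part (iii) via $f_1^n(\overline{A}) \subset \overline{f_1^n(A)}$ for the $+$ invariant case and the closedness of $f_1^n(\overline{A})$ for the weakly $-$ invariant case; part (iv) via openness of $f_1^n$ and then Lemma~\ref{complement} for the closure statement. The only cosmetic difference is that in (iii) you invoke compactness explicitly to see that $f_1^n(\overline{A})$ is closed, whereas the paper phrases this step as ``continuity of $f_1^n$ gives $\overline{f_1^n(A)} \subset f_1^n(\overline{A})$''---your formulation is arguably clearer, since continuity alone does not give that inclusion without compactness.
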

\begin{proof}
	(i) follows from equation (\ref{2.3}).\\
	(ii) follows from equations (\ref{2.2}) and (\ref{2.3}).\\
	Proof of (iii): If $A$ is + invariant, then $f_1^n(A)\subset A$ and so $f_1^n(\overline A) \subset \overline {f_1^n(A)} \subset \overline{A}$, for every $n\in \mathbb{N}$. Therefore $\overline{A}$ is + invariant.\\
	Suppose $A$ is weakly $-$ invariant, and let $n\in \mathbb{N}$. Then $A\subset f_1^n(A)$ or $\overline{A}\subset \overline{f_1^n(A)}$. Continuity of $f_1^n$ gives us $\overline{f_1^n(A)}\subset  f_1^n(\overline{A})$. The reverse inclusion is always true. Hence we have $\overline{f_1^n(A)}=  f_1^n(\overline{A})$. Thus, we have $\overline A\subset  f_1^n(\overline{A})$ or $\overline{A}$ is weakly $-$ invariant. Hence $\overline{A}$ is invariant if $A$ is invariant.\\
	Proof of (iv): Assume $f_n$ is an open map for every $n\in \mathbb{N}$. Since $A^{\circ}$ is open, $f_n(A^{\circ})$ is open for all $n\in \mathbb{N}$. Since $A$ is + invariant and $f_1^n$ is continuous, we have $f_1^n(A^{\circ})\subset (f_1^n(A))^{\circ}\subset A^{\circ}$ for all $n\in \mathbb{N}$. Thus, $A^{\circ}$ is + invariant. If $A$ is $-$ invariant, then $A^c$ is + invariant. By above, $(A^c)^{\circ}$ is + invariant, hence $\overline A$= $((A^c)^{\circ})^c$ is $-$ invariant.
\end{proof}

\begin{lemma}\label{6}
	If $f_n$ is surjective for every $n\in \mathbb{N}$ and $A$ is a subset of $X$, then the following statements hold:
	\begin{enumerate}[label=\upshape(\roman*), leftmargin=*, widest=iii]
		\item If $A$ is strong $-$ invariant, then $A$ is extended $-$ invariant.
		\item The set $A$ is invariant if and only if $A$ is both strong + invariant and extended $-$ invariant. Also, $A=f_n^{-1}(A)$ for all $n\in \mathbb{N}$ if and only if $A$ is both strong + invariant and strong $-$ invariant, in that case, it is invariant.
		\item If $A$ is either strong + invariant, extended $-$ invariant or strong invariant, then $\overline A$ satisfies the corresponding property.
		\item If $f_n$ is open map for every $n\in \mathbb{N}$ and if $A$ is strong + invariant then $A^{\circ} $ is strong + invariant. Also, if $A$ is strong $-$ invariant then $\overline A$ is strong $-$ invariant.
	\end{enumerate}
\end{lemma}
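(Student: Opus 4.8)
The plan is to observe that Lemma \ref{6} is the verbatim analogue of Lemma \ref{5} under the dictionary that replaces the iterate $f_1^n$ by the individual map $f_n$, and correspondingly replaces ``+ invariant'', ``$-$ invariant'' and ``weakly $-$ invariant'' by ``strong + invariant'', ``strong $-$ invariant'' and ``extended $-$ invariant''; the word ``invariant'' is preserved, using the remark that $A$ is invariant precisely when $f_n(A)=A$ for all $n\in\mathbb{N}$. The essential point to check is that every ingredient used in the proof of Lemma \ref{5} --- surjectivity, continuity, openness, and equations (\ref{2.2}) and (\ref{2.3}) --- is available for each map $f_n$ separately, and indeed the hypothesis now supplies these for the $f_n$ directly, so that no property specific to compositions is required.

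For (i), assuming $A$ is strong $-$ invariant means $f_n^{-1}(A)\subset A$ for each $n$; since $f_n$ is surjective, equation (\ref{2.3}) gives $A\subset f_n(A)$, i.e.\ $A$ is extended $-$ invariant. For the first assertion of (ii), the remark reduces invariance to $f_n(A)=A$ for all $n$, and $f_n(A)=A$ splits into $f_n(A)\subset A$ (strong + invariant) together with $A\subset f_n(A)$ (extended $-$ invariant). For the second assertion, equation (\ref{2.2}) turns $f_n(A)\subset A$ into $A\subset f_n^{-1}(A)$, so $A=f_n^{-1}(A)$ for all $n$ is equivalent to $A$ being simultaneously strong + invariant and strong $-$ invariant; combining this with part (i), namely that strong $-$ invariance implies extended $-$ invariance under surjectivity, and then with the first assertion of (ii), forces $A$ to be invariant.

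Parts (iii) and (iv) repeat the closure and interior arguments of Lemma \ref{5} with $f_n$ in place of $f_1^n$. For (iii) I would use continuity of each $f_n$ together with compactness of $X$ to obtain $\overline{f_n(A)}=f_n(\overline A)$, which promotes strong + invariance and extended $-$ invariance (hence strong invariance) from $A$ to $\overline A$. For (iv), openness of $f_n$ gives $f_n(A^{\circ})\subset (f_n(A))^{\circ}\subset A^{\circ}$ when $A$ is strong + invariant; the strong $-$ invariant case follows by passing to complements, using the lemma that $A$ is strong + invariant if and only if $A^c$ is strong $-$ invariant. The only point requiring a moment's care --- rather than a genuine obstacle --- is the bookkeeping of which notion corresponds to which under the dictionary: in particular that the ``strong invariant'' set in (iii) coincides with an invariant set by the remark, and that the complement step in (iv) must invoke the strong-invariance complement lemma rather than Lemma \ref{complement}.
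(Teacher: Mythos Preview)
Your proposal is correct and is precisely the approach the paper takes: the paper's proof of Lemma \ref{6} consists solely of the sentence ``The proof is similar to the proof of Lemma \ref{5},'' and your dictionary $f_1^n\leftrightarrow f_n$ together with the corresponding invariance notions makes that similarity explicit. Your additional remark identifying ``strong invariant'' in (iii) with ``invariant'' via the observation that $A$ is invariant iff $f_n(A)=A$ for all $n$ is the right way to parse that clause.
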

\begin{proof}
	The proof is similar to the proof of Lemma \ref{5}.
\end{proof}

\begin{definition} \normalfont
	Let $\epsilon>0$. A subset $U\subset X$ is called \emph{$\epsilon$ dense} if $U$ intersects every open ball of radius $\epsilon$ in $X$.
\end{definition}
Some notions of topological transitivity can be written in terms of $\epsilon$ dense sets. To establish such results we recall the following result from \cite{EAN}.
\begin{lemma}\label{7}
	Let $(U_n)$ be a sequence of subsets of $X$. Then
	\begin{enumerate}[label=\upshape(\roman*), leftmargin=*, widest=iii]
		\item $\bigcup\limits_{n=1}^{\infty} U_n$ is dense if and only if for every $\epsilon>0$, there exists $N\in \mathbb{N}$ such that $\bigcup\limits_{n=1}^{N} U_n$ is $\epsilon$ dense.
		\item If each $U_n$ is open and $\bigcup\limits_{n=1}^{\infty} U_n = X$, then there exists $N\in \mathbb{N}$ such that $\bigcup\limits_{n=1}^{N} U_n =X$.
	\end{enumerate}
\end{lemma}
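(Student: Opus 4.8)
The plan is to derive both parts from the compactness of $X$, with the only real content being a radius-halving trick in the forward direction of part (i).

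Part (ii) I would dispose of first, since it is an immediate appeal to compactness. As each $U_n$ is open and $\bigcup_{n=1}^{\infty} U_n = X$, the family $(U_n)$ is an open cover of the compact space $X$. Extracting a finite subcover $U_{n_1}, \dots, U_{n_k}$ and setting $N = \max\{n_1, \dots, n_k\}$ gives $\bigcup_{n=1}^{N} U_n \supset \bigcup_{i=1}^{k} U_{n_i} = X$, while the reverse inclusion is trivial. Hence $\bigcup_{n=1}^{N} U_n = X$.

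For part (i) I would argue the two implications separately. The reverse implication is the easy one: given a nonempty open set $V$, choose $y \in V$ and $\epsilon > 0$ with $B(y,\epsilon) \subset V$; by hypothesis some finite union $\bigcup_{n=1}^{N} U_n$ is $\epsilon$ dense, so it meets $B(y,\epsilon)$ and therefore $V$. As $V$ was arbitrary, $\bigcup_{n=1}^{\infty} U_n$ is dense. The forward implication is where compactness re-enters and where I expect the one subtle point to lie. Given $\epsilon > 0$, I would use compactness to cover $X$ by finitely many balls of radius $\epsilon/2$ (deliberately \emph{not} $\epsilon$), say $X = \bigcup_{i=1}^{k} B(x_i, \epsilon/2)$. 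Density of $\bigcup_n U_n$ then supplies, for each $i$, an index $n_i$ with $U_{n_i} \cap B(x_i, \epsilon/2) \neq \emptyset$; taking $N = \max_i n_i$, the finite union $\bigcup_{n=1}^{N} U_n$ already meets every $B(x_i, \epsilon/2)$.

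The crux is then upgrading this to genuine $\epsilon$ density for \emph{every} ball of radius $\epsilon$, not merely for the balls appearing in the cover. For an arbitrary ball $B(y,\epsilon)$, I would pick $j$ with $y \in B(x_j, \epsilon/2)$, choose $z \in \bigcup_{n=1}^{N} U_n$ with $z \in B(x_j, \epsilon/2)$, and conclude via the triangle inequality that $d(z,y) \le d(z,x_j) + d(x_j,y) < \epsilon/2 + \epsilon/2 = \epsilon$, so $z \in B(y,\epsilon)$. This halving of the radius is the only nonobvious maneuver: working directly with radius-$\epsilon$ balls in the cover would guarantee only that the finite union meets each cover ball, which is strictly weaker than $\epsilon$ density. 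Aside from this point, both parts are routine consequences of compactness.
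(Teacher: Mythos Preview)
Your argument is correct in every detail: the compactness extraction in (ii), the two directions of (i), and in particular the radius-halving trick you flag as the one genuine subtlety. There is nothing to compare against, however, because the paper does not prove this lemma at all; it simply recalls the statement from \cite{EAN} and uses it as a black box. Your write-up therefore supplies strictly more than the paper does.
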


\section{Main Results}
\subsection{Topological transitivity}
For any two nonempty subsets $U,V\subset X$, we define the \emph{set of hitting times} $N(U,V)$ as:\\
$N(U,V)=\{n\in \mathbb{N}:f_1^n(U)\cap V \neq \emptyset\}$
$=\{n\in \mathbb{N}:U\cap f_1^{-n}( V) \neq \emptyset\}$ and the \emph{extended set of hitting times} $N_e(U,V)$ as:\\
$N_e(U,V)=\{\alpha\in \sum:f_{\alpha}(U)\cap V \neq \emptyset\}$
$=\{\alpha\in \sum:U\cap f_{\alpha}^{-1}( V) \neq \emptyset\}$.\\
Clearly, $N(U,V) \subset N_e(U,V)$.\\
For an element $x\in X$, we write $N(U,\{x\})$ as $N(U,x)$ and $N_e(U,\{x\})$ as $N_e(U,x)$.

\begin{definition}[Topological transitivity] \normalfont
	A system $(X,f_{1,\infty})$ is called \emph{topologically transitive} if for every opene(=nonempty open) $V\subset X$, $\bigcup\limits_{n=1}^{\infty}f_1^n(V)$ is dense in $X$. Equivalently, if for every opene pair $U,V\subset X$, the set of hitting times $N(U,V)$ is nonempty. i.e., for every opene pair $U,V\subset X$, there exists $n\in \mathbb{N}$ such that $f_1^n(U)\cap V \neq \emptyset$.\\
	
	A point $x\in X$ is called a \emph{transitive point} if for every opene $V\subset X$, the set of hitting times $N(x,V)$ is nonempty. Equivalently, if the orbit $O(x,f_{1,\infty})=\{f_1^n(x):n\in \mathbb{N}\}$ is dense in $X$. The set of all transitive points is denoted by $T(f_{1,\infty})$.
\end{definition}
In Theorem \ref{8} we will prove that an NDDS $(X,f_{1,\infty})$ is topologically transitive if and only if the set of transitive points is a $G_{\delta}$ subset of $X$. To prove these implications we need the following proposition. 

\begin{proposition}\label{P3}
	Let $(X,f_{1,\infty})$ be an NDDS. Then the set $T(f_{1,\infty})$ of transitive points equals $\{x:\omega(x,f_{1,\infty})=X\}$.
\end{proposition}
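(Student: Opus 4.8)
The plan is to prove the two inclusions $\{x:\omega(x,f_{1,\infty})=X\}\subseteq T(f_{1,\infty})$ and $T(f_{1,\infty})\subseteq\{x:\omega(x,f_{1,\infty})=X\}$ separately, after first restating each condition in terms of how often the orbit meets an opene set. By the definition of a transitive point, $x\in T(f_{1,\infty})$ exactly when every opene $V$ satisfies $N(x,V)\neq\emptyset$, i.e. $V$ is visited by $O(x,f_{1,\infty})$ at least once. On the other hand, reading off $\omega(x,f_{1,\infty})=\bigcap_{N}\overline{\{f_1^n(x):n\geq N\}}$, a point $y$ lies in $\omega(x,f_{1,\infty})$ precisely when every neighbourhood of $y$ contains $f_1^n(x)$ for arbitrarily large $n$; hence $\omega(x,f_{1,\infty})=X$ holds if and only if every opene $V$ is visited infinitely often, i.e. $N(x,V)$ is infinite. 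With this reformulation the proposition becomes the assertion that, for a transitive point, ``at least one visit'' upgrades to ``infinitely many visits''.

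The inclusion $\{x:\omega(x,f_{1,\infty})=X\}\subseteq T(f_{1,\infty})$ is immediate: since $\omega(x,f_{1,\infty})\subseteq\overline{O(x,f_{1,\infty})}$, the hypothesis $\omega(x,f_{1,\infty})=X$ forces $\overline{O(x,f_{1,\infty})}=X$, so $O(x,f_{1,\infty})$ is dense and $x\in T(f_{1,\infty})$.

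For the reverse inclusion I would argue by contradiction. Fix $x\in T(f_{1,\infty})$ and an opene $V$, and suppose $N(x,V)$ is finite, so that all visits of the orbit to $V$ occur before some index $N$. Then $F=\{f_1^n(x):1\leq n\leq N-1\}$ is finite, hence closed, and $O(x,f_{1,\infty})\cap V\subseteq F$. The idea is to pass to a smaller opene $W\subseteq V$ with $W\cap F=\emptyset$: once $W$ is available, density of $O(x,f_{1,\infty})$ produces a hit $f_1^m(x)\in W\subseteq V$, and this hit cannot lie in $F$, contradicting $O(x,f_{1,\infty})\cap V\subseteq F$. Thus $N(x,V)$ must be infinite; since $V$ was an arbitrary opene set, $\omega(x,f_{1,\infty})$ is dense, and being closed it equals $X$.

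The main obstacle is exactly the construction of the shrunken opene $W\subseteq V$ disjoint from the finite set $F$, and this is where the geometry of $X$ enters. If $V$ contains a point that is not isolated, then arbitrarily small balls about it are infinite, so one can select such a ball avoiding the finitely many points of $F$, and the argument closes. The delicate case is when $V$ meets the orbit only at isolated points that already lie in $F$; I expect this to be the genuine crux, and it is the step where one must invoke the standing hypotheses on $X$ (compactness, and the behaviour of the orbit near isolated points) to rule out that a transitive point visits some opene set only finitely often.
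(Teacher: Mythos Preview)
Your overall strategy matches the paper's: prove both inclusions, with the substantive one being $T(f_{1,\infty})\subseteq\{x:\omega(x,f_{1,\infty})=X\}$. The paper's argument for this direction is terser than yours---it simply says that density of the orbit yields a sequence from the orbit converging to an arbitrary $y\in X$, and concludes $y\in\omega(x,f_{1,\infty})$---but that is exactly the step you have (rightly) flagged as delicate. A sequence of orbit points converging to $y$ need not have indices tending to infinity when $y$ is an isolated point already lying in the orbit, and without that the conclusion $y\in\omega(x,f_{1,\infty})$ does not follow.

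Your hesitation is in fact warranted: as literally stated, with only compactness of $X$, the proposition is false. Take $X=\{a,b\}$ and choose the $f_n$ so that the orbit of some $x$ is $a,b,b,b,\dots$; then $x$ is a transitive point but $\omega(x,f_{1,\infty})=\{b\}\neq X$. The missing hypothesis is that $X$ be \emph{perfect}, which is precisely what is assumed in the theorem where this proposition is actually used. Under perfectness your shrinking manoeuvre goes through cleanly: every opene $V$ is infinite, so $W=V\setminus F$ is still opene for any finite $F$, density of the orbit produces a new visit, and iterating gives infinitely many distinct hitting times (hence unbounded ones), so $N(x,V)$ is infinite. Thus your diagnosis of the crux is correct, and the required extra input is perfectness of $X$ rather than any subtler property of the orbit near isolated points; the paper's own proof tacitly relies on the same hypothesis.
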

\begin{proof}
	Let $x\in X$ be such that $\omega(x,f_{1,\infty})=X$. Then $ \overline{O(x,f_{1,\infty})}=O(x,f_{1,\infty})\cup \omega(x,f_{1,\infty})=X$, which implies $x\in T(f_{1,\infty})$.
	Conversely, assume $x\in T(f_{1,\infty})$. Let $y\in X$ and $U_y$ be any neighbourhood of $y$. Since $O(x,f_{1,\infty})$ is dense in $X$, there exists a sequence in the orbit of $x$ converging to $y$. This means that $y$ is an $\omega$-limit point of $x$, and $\omega(x,f_{1,\infty})=X$.
\end{proof}
Now we give the definition of extended transitivity which is a generalization of topological transitivity.
\begin{definition}[Extended transitivity] \normalfont
	A system $(X,f_{1,\infty})$ is called \emph{extended transitive} if for every opene $V\subset X$, $\bigcup\limits_{\alpha\in\sum}f_{\alpha}(V)$ is dense in $X$. Equivalently, if for every opene pair $U,V\subset X$, the extended set of hitting times $N_e(U,V)$ is nonempty.\\
	A point $x\in X$ is called an \emph{extended transitive point} if for every opene $V\subset X$, the extended set of hitting times $N_e(x,V)$ is nonempty. Equivalently, if the extended orbit $J(x)=\{f_{\alpha}(x):\alpha\in \sum\}$ is dense in $X$. The set of all extended transitive points is denoted by $T_e(f_{1,\infty})$.
\end{definition}
We will use the following proposition in Theorem \ref{9}.
\begin{proposition}\label{P4}
	Let $(X,f_{1,\infty})$ be an NDDS. Then the set $T_e(f_{1,\infty})$ of extended transitive points equals $\{x:\omega_e(x,f_{1,\infty})=X\}$.
\end{proposition}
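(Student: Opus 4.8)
The plan is to mirror the proof of Proposition \ref{P3} verbatim, replacing the orbit $O(x,f_{1,\infty})$ by the extended orbit $J(x)$ and the $\omega$-limit set by the extended $\omega$-limit set $\omega_e(x,f_{1,\infty})$. The single tool that drives both inclusions is the closure identity
\[
\overline{J(x)}=J(x)\cup\omega_e(x,f_{1,\infty}),
\]
recorded in the definition of the extended orbit closure, together with the characterization that $x\in T_e(f_{1,\infty})$ precisely when $J(x)$ is dense in $X$, i.e. $\overline{J(x)}=X$. Once these are in hand the argument is almost purely set-theoretic.

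For the inclusion $\{x:\omega_e(x,f_{1,\infty})=X\}\subseteq T_e(f_{1,\infty})$ I would argue directly: if $\omega_e(x,f_{1,\infty})=X$, then $\overline{J(x)}=J(x)\cup\omega_e(x,f_{1,\infty})\supseteq X$, whence $\overline{J(x)}=X$, so $J(x)$ is dense and $x$ is an extended transitive point. This direction is immediate and requires no further work.

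For the reverse inclusion $T_e(f_{1,\infty})\subseteq\{x:\omega_e(x,f_{1,\infty})=X\}$ I would fix $x\in T_e(f_{1,\infty})$, so that $J(x)$ is dense, and aim to show $\omega_e(x,f_{1,\infty})=X$. Given an arbitrary $y\in X$, the target is to produce, inside every neighbourhood $U$ of $y$, a point of $J(x)$ distinct from $y$; this exhibits $y$ as an accumulation point of $J(x)$ and hence places it in $\omega_e(x,f_{1,\infty})$. By density of $J(x)$ each such $U$ already meets $J(x)$, so the only thing left to verify is that the point of $J(x)$ we extract can be taken different from $y$.

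The main (and essentially the only) obstacle is this last point: ruling out that $y$ is isolated from the remainder of $J(x)$. When $y$ is not an isolated point of $X$, the set $U\setminus\{y\}$ is nonempty and open, so by density of $J(x)$ it contains some $f_{\alpha}(x)\neq y$; shrinking $U$ through a neighbourhood base at $y$ then yields a sequence $\big(f_{\alpha_k}(x)\big)_k$ of extended-orbit points converging to $y$, so $y\in\omega_e(x,f_{1,\infty})$. As $y$ was arbitrary this gives $\omega_e(x,f_{1,\infty})=X$ and closes the argument. This is exactly the step that is glossed over in Proposition \ref{P3} by the phrase ``there exists a sequence in the orbit converging to $y$'', and it is the place where the standing assumption that the ambient space carries no isolated points (or a separate treatment of isolated $y$) would be used, should it be needed.
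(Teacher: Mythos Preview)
Your proposal is correct and follows precisely the approach of the paper, which simply says ``the proof is similar to the proof of Proposition~\ref{P3}'' and whose proof of Proposition~\ref{P3} is exactly what you have spelled out with $J(x)$ in place of $O(x,f_{1,\infty})$. In fact you have gone a step further than the paper by isolating the subtlety about isolated points that the paper's own argument glosses over with the phrase ``there exists a sequence in the orbit converging to $y$''; this is an honest observation, and the paper does not address it either.
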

\begin{proof}
	The proof is similar to the proof of Proposition \ref{P3}.
\end{proof}
%
%
\begin{remark}
	Every transitive point is extended transitive and recurrent.
	On the other hand one can easily observe that every topologically transitive system is extended transitive.
\end{remark}

The following theorem gives some equivalent conditions for topological transitivity.
\begin{theorem}\label{8}
	Let $(X,f_{1,\infty})$ be an NDDS, where $X$ is a perfect space(i.e., has no isolated points). Then the following are equivalent:
	\begin{enumerate}[label=\upshape(\roman*), leftmargin=*, widest=iii]
		\item  The system is topologically transitive.
		\item  For every pair of opene sets $U$ and $V$ in $X$, there exists $n\in \mathbb{N}$ such that $f_1^{-n}(U)\cap V\neq\emptyset$.
		\item  For every pair of opene sets $U, V$ in $X$ the set $N(U,V)$ is nonempty.
		\item  For every pair of opene sets $U, V$ in $X$ the set $N(U,V)$ is infinite.
		\item  There exists $x\in X$ such that the orbit $O(x,f_{1,\infty})$ is dense in $X$.
		\item  The set $T(f_{1,\infty})$ of transitive points is a dense, $G_{\delta}$ subset of $X$.
		\item  For every opene set $U\subset X$, $\bigcup\limits_{n=1}^{\infty}f_1^n(U)$ is dense in $X$.
		\item  For every opene set $U\subset X$, and $\epsilon>0$, there exists $N\in \mathbb{N}$ such that  $\bigcup\limits_{n=1}^{N}f_1^n(U)$ is $\epsilon$ dense in $X$.
		\item  For every opene set $U\subset X$, $\bigcup\limits_{n=1}^{\infty}f_1^{-n}(U)$ is dense in $X$.
		\item For every opene set $U\subset X$, and $\epsilon>0$, there exists $N\in \mathbb{N}$ such that  $\bigcup\limits_{n=1}^{N}f_1^{-n}(U)$ is $\epsilon$ dense in $X$.
	\end{enumerate}
\end{theorem}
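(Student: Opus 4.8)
The plan is to establish the cycle of equivalences by proving a convenient chain of implications and then closing the loop, exploiting the duality (\ref{2.1}) between forward and backward hitting conditions and the density characterizations from Lemma \ref{7}.

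I would begin by recording that (i) is literally the definition of topological transitivity, so (i), (iii), and (vii) are all immediate restatements of one another: the density of $\bigcup_{n=1}^\infty f_1^n(U)$ in (vii) meeting every opene $V$ is exactly the statement that $N(U,V)\neq\emptyset$ in (iii), which is the definition in (i). Next I would dispense with the $\epsilon$-dense reformulations (viii) and (x) by invoking Lemma \ref{7}(i): applying it to the sequence $U_n=f_1^n(U)$ shows (vii)$\Leftrightarrow$(viii), and applying it to $U_n=f_1^{-n}(U)$ shows (ix)$\Leftrightarrow$(x). The equivalence of the forward statements with the backward statements (ii), (ix) is handled by (\ref{2.1}): for opene $U,V$ we have $f_1^n(U)\cap V\neq\emptyset \Leftrightarrow U\cap f_1^{-n}(V)\neq\emptyset$, which directly interchanges (iii) with (ii) and (vii) with (ix) after relabeling $U$ and $V$.

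The substantive part is the equivalence with the pointwise statements (iv), (v), (vi). I would first show (iii)$\Rightarrow$(iv), i.e. upgrade "nonempty" to "infinite"; this is where the perfect-space hypothesis enters. Given opene $U,V$ and any $n_0\in N(U,V)$, pick a point $y\in f_1^{n_0}(U)\cap V$; since $X$ has no isolated points and $f_1^{n_0}(U)$ is nonempty, I would use continuity to find a smaller opene $V'\subset V$ missing the finitely many already-found hitting images, forcing a new hitting time, and iterate. The cleanest route is the standard Baire-category argument: transitivity makes each set $\bigcup_{n\ge N} f_1^{-n}(V)$ dense open, so by (ix) and density it intersects $U$ for arbitrarily large $n$, giving $N(U,V)$ infinite. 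Then I would prove (iv)$\Rightarrow$(vi) by the Baire category theorem: enumerate a countable base $\{V_k\}$ of $X$, set $G=\bigcap_k \bigcup_{n=1}^\infty f_1^{-n}(V_k)$, observe that each $\bigcup_n f_1^{-n}(V_k)$ is open and dense (density from (ix), which we already have), so $G$ is a dense $G_\delta$ by Baire, and verify $G=T(f_{1,\infty})$ since $x\in G$ exactly when the orbit of $x$ meets every $V_k$, i.e. is dense. The implications (vi)$\Rightarrow$(v) is trivial (a dense set is nonempty, so a transitive point exists), and (v)$\Rightarrow$(i) follows because a dense orbit forces $O(x,f_{1,\infty})$, hence $\bigcup_n f_1^n(\{x\})\subset \bigcup_n f_1^n(U)$ for any opene $U$ containing $x$, to meet every opene set; alternatively invoke Proposition \ref{P3} to pass between transitive points and points with full $\omega$-limit set.

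The main obstacle I anticipate is the (iii)$\Rightarrow$(iv) step, since promoting a single hitting time to infinitely many is precisely where one must use that $X$ is perfect: without this hypothesis a transitive system on a space with isolated points could have $N(U,V)$ finite. I would make sure the Baire-category formulation carries the weight here, as it simultaneously yields the infinitude in (iv) and the dense $G_\delta$ structure in (vi), letting the remaining implications reduce to the formal manipulations of (\ref{2.1}) and Lemma \ref{7} already set up above.
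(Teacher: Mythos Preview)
Your treatment of the formal equivalences among (i), (ii), (iii), (vii)--(x) via (\ref{2.1}) and Lemma~\ref{7} matches the paper, as does your Baire argument identifying $T(f_{1,\infty})$ with $\bigcap_k\bigcup_n f_1^{-n}(V_k)$; this is exactly the paper's (ix)$\Rightarrow$(vi). Where you diverge is in the ordering of the remaining implications: the paper runs the cycle as (ix)$\Rightarrow$(vi)$\Rightarrow$(v)$\Rightarrow$(iv)$\Rightarrow$(iii), whereas you attempt (iii)$\Rightarrow$(iv) directly and then close with (v)$\Rightarrow$(i).

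Both of those steps in your chain fail in the non-autonomous setting. For (iii)$\Rightarrow$(iv), the ``cleanest route'' you sketch asserts that each tail $\bigcup_{n\ge N} f_1^{-n}(V)$ is dense, but (ix) only gives density of the full union $\bigcup_{n\ge 1} f_1^{-n}(V)$; in the autonomous case one passes between these using the factorisation $f^{-n}=f^{-(N-1)}\circ f^{-(n-N+1)}$, and there is no analogue of this identity for the iterates $f_1^{-n}$. Your shrinking-$V'$ sketch hits the same wall: to force a hitting time larger than those already found you implicitly shift the dynamics, which an NDDS does not allow. Likewise, your (v)$\Rightarrow$(i) only covers opene $U$ that contain the given transitive point $x$; you cannot transport $x$ into an arbitrary $U$ along its orbit, because $f_1^m(x)$ need not be a transitive point for the same sequence $f_{1,\infty}$. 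The paper sidesteps the first issue by never attempting (iii)$\Rightarrow$(iv) directly, and handles the second by appealing to the \emph{density} of transitive points from (vi): then every opene $U$ contains some transitive $x$, perfectness forces the orbit of $x$ to meet $V$ in an infinite set, and $N(\{x\},V)\subset N(U,V)$ yields (iv). (The paper labels this step (v)$\Rightarrow$(iv), but it is really the density supplied by (vi), already established in the chain, that makes the argument work for arbitrary $U$.)
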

\begin{proof}
	Notice that condition (vii) is the definition of topological transitivity and so, of course, (i)$\Leftrightarrow$(vii). \\
	(ii)$\Leftrightarrow$(iii), (iii)$\Leftrightarrow$(vii) and (iii)$\Leftrightarrow$(ix) follow from the definition of $N(U,V)$.
	(vii)$\Leftrightarrow$(viii), and (ix)$\Leftrightarrow$(x) follow from Lemma \ref{7}.\\
	So for, we have established equivalence of (i), (ii), (iii), (vii), (viii), (ix), (x). To complete the proof we prove that (ix)$\Rightarrow$(vi)$\Rightarrow$(v)$\Rightarrow$(iv)$\Rightarrow$(iii).\\
	To prove (ix)$\Rightarrow$(vi): 
	Let $x\in T(f_{1,\infty})$, then for all opene $U\subset X$, there exists $n\in \mathbb{N}$ such that $f_1^n(x)\in U$. So $x\in \bigcup\limits_{n=1}^{\infty} f_1^{-n}(U)$ for all opene $U$. Since every compact metric space is second countable, we have a countable base $\mathcal{B}$ and so $T(f_{1,\infty})\subset\bigcap\limits_{U\in \mathcal{B}} \bigcup\limits_{n=1}^{\infty} f_1^{-n}(U)$.\\
	Now let $U\in \mathcal{B}$ and $x\in\bigcup\limits_{n=1}^{\infty} f_1^{-n}(U)$. This implies that $x\in \bigcup\limits_{n=1}^{\infty} f_1^{-n}(U)$, and hence $x\in f_1^{-n}(U)$ for some $n\in \mathbb{N}$. So $f_1^n(x)\in U$ for some $n\in \mathbb{N}$, which implies that $N(U,x)\neq \emptyset$, and hence $x\in T(f_{1,\infty})$. Therefore $\bigcap\limits_ {U\in \mathcal{B}}(\bigcup\limits_{n=1}^{\infty} f_1^{-n}(U))\subset T(f_{1,\infty}) $. Hence $\bigcap\limits_ {U\in \mathcal{B}}(\bigcup\limits_{n=1}^{\infty} f_1^{-n}(U))= T(f_{1,\infty}) $. Now by (ix), $\bigcup\limits_{n=1}^{\infty} f_1^{-n}(U)$ is dense in $X$ for all opene $U\subset X$. Thus, $T(f_{1,\infty}) $ is countable intersection of opene dense subsets in $X$. Since $X$ is a compact metric space, $T(f_{1,\infty}) $ is a dense, $G_{\delta}$ subset of $X$.\\
	(vi)$\Rightarrow$(v) follows by Proposition \ref{P3}.\\
	To prove (v)$\Rightarrow$(iv): If $X$ is perfect and $x\in T(f_{1,\infty}) $ then the orbit $O(x,f_{1,\infty})$ of $x$ intersects every opene set in an infinite set. Thus, $N(U,V)$ is infinite for every opene $U,V\subset X$.\\
	To conclude the proof, observe that (iv)$\Rightarrow$(iii) is trivial.
\end{proof}

\begin{corollary}
	Let $(X,f_{1,\infty})$ be a topologically transitive system. Then: 
	\begin{enumerate}[label=\upshape(\roman*), leftmargin=*, widest=iii]
		\item any opene, $-$ invariant subset $U\subset X$ is dense in $X$. 
		\item if $A\subset X$ is closed, + invariant, then either $A=X$ or $A$ is nowhere dense in $X$. 
	\end{enumerate}
\end{corollary}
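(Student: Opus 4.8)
The plan is to prove the two assertions of the corollary directly from the characterizations of topological transitivity established in Theorem~\ref{8}, together with the complement relationship between $+$ invariant and $-$ invariant sets recorded in Lemma~\ref{complement}.

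For part (i), I would argue that any opene, $-$ invariant subset $U$ must be dense. The key observation is that for a $-$ invariant set, $f_1^{-n}(U)\subset U$ for every $n\in\mathbb{N}$, so the backward orbit of $U$ is trapped inside $U$: we have $\bigcup_{n=1}^{\infty} f_1^{-n}(U)\subset U$. On the other hand, by the equivalence (i)$\Leftrightarrow$(ix) of Theorem~\ref{8}, since the system is topologically transitive and $U$ is opene, the set $\bigcup_{n=1}^{\infty} f_1^{-n}(U)$ is dense in $X$. Combining these two facts, $U$ contains a dense subset of $X$, hence $U$ itself is dense. This step is short and the main content is simply recognizing which of the ten equivalent conditions to invoke.

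For part (ii), I would reduce to part (i) by passing to complements. Suppose $A\subset X$ is closed and $+$ invariant. By Lemma~\ref{complement}, the complement $A^c$ is $-$ invariant, and since $A$ is closed, $A^c$ is open. If $A^c$ is nonempty, then $A^c$ is opene and $-$ invariant, so by part (i) it is dense in $X$; a dense complement means $A$ has empty interior, i.e., $A$ is nowhere dense (being already closed). If instead $A^c$ is empty, then $A=X$. This dichotomy is exactly the claimed conclusion.

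The main obstacle, such as it is, will be making the right choice of equivalent formulation in part (i): the naive instinct is to use the forward-orbit condition (vii), but the $-$ invariance hypothesis meshes cleanly only with the backward-orbit condition (ix), so the argument hinges on selecting that formulation. Once that matching is in place, both parts follow by elementary set inclusions, and I expect no further technical difficulty. I would double-check that the perfectness hypothesis on $X$ assumed in Theorem~\ref{8} is available here; if the corollary is to be stated without it, one should either add it or verify that the equivalence (i)$\Leftrightarrow$(ix) does not actually require perfectness (indeed, the proof of Theorem~\ref{8} derives (ix) from the definition of topological transitivity without using the perfect hypothesis, which is only needed for the conditions involving infinitude of $N(U,V)$).
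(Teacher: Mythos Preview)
Your proposal is correct and follows essentially the same approach as the paper: for (i) you invoke condition (ix) of Theorem~\ref{8} and combine it with the containment $\bigcup_{n=1}^{\infty} f_1^{-n}(U)\subset U$ coming from $-$ invariance, and for (ii) you reduce to (i) via Lemma~\ref{complement} by passing to complements. The paper's proof is identical in structure, though it states the equivalence of (i) and (ii) more tersely; your remark about perfectness not being needed for the implication (i)$\Leftrightarrow$(ix) is a useful observation that the paper does not make explicit.
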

\begin{proof}
	We first prove (i). Let the system $(X,f_{1,\infty})$ be topologically transitive, then by condition (ix) of Theorem \ref{8}, we have for every opene $U\subset X$, $\bigcup\limits_{n=1}^{\infty}f_1^{-n}(U)$ is dense in $X$. Suppose $U\subset X$ is opene and $-$ invariant, then $f_1^{-n}(U)\subset U$ for all $n\in \mathbb{N}$. Therefore $\bigcup \limits_{n=1}^{\infty}f_1^{-n}(U)\subset U$ and hence $U$ is dense in $X$. From Lemma \ref{complement}, both (i) and (ii) are equivalent.
\end{proof}
In the following theorem we provide some equivalent conditions for extended transitivity.

\begin{theorem}\label{9}
	Let $(X,f_{1,\infty})$ be an NDDS, where $X$ is a perfect space and each $f_n$ is surjective. Then the following are equivalent:
	\begin{enumerate}[label=\upshape(\roman*), leftmargin=*, widest=iii]
		\item  The system is extended transitive.
		\item  For every pair of opene sets $U$ and $V$ in $X$, there exists $\alpha\in \sum$ such that $f_{\alpha}^{-1}(U)\cap V\neq
		\emptyset$.
		\item  For every pair of opene sets $U, V$ in $X$ the set $N_e(U,V)$ is nonempty.
		\item  For every pair of opene sets $U, V$ in $X$ the set $N_e(U,V)$ is infinite.
		\item  There exists $x\in X$ such that the extended orbit $J(x)$ is dense in $X$.
		\item  The set $T_e(f_{1,\infty})$ of extended transitive points is a dense, $G_{\delta}$ subset of $X$.
		\item  For every opene set $U\subset X$, $\bigcup\limits_{\alpha\in\sum}f_{\alpha}(U)$ is dense in $X$.
		\item  For every opene set $U\subset X$, and $\epsilon>0$, there exists a finite subset $\sum'\subset \sum$ such that  $\bigcup\limits_{\alpha\in\sum'}f_{\alpha}(U)$ is $\epsilon$ dense in $X$.
		\item  For every opene set $U\subset X$, $\bigcup\limits_{\alpha\in\sum}f_{\alpha}^{-1}(U)$ is dense in $X$.
		\item  For every opene set $U\subset X$, and $\epsilon>0$, there exists a finite subset $\sum'\subset \sum$ such that  $\bigcup\limits_{\alpha\in\sum'}f_{\alpha}^{-1}(U)$ is $\epsilon$ dense in $X$.
		\item If $U\subset X$ is opene and strong $-$ invariant, then $U$ is dense in $X$.
		\item If $A\subset X$ is closed and strong + invariant, then either $A=X$ or $A$ is nowhere dense in $X$.
	\end{enumerate}
\end{theorem}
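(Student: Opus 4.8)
The plan is to transcribe the proof of Theorem \ref{8} for the ten core conditions (i)--(x) and then to treat the two new invariance conditions (xi), (xii) separately, using them to close a short loop back to the core. For the core block, (i)$\Leftrightarrow$(vii) is the definition; (ii)$\Leftrightarrow$(iii), (iii)$\Leftrightarrow$(vii), (iii)$\Leftrightarrow$(ix) follow from the definition of $N_e(U,V)$ together with (\ref{2.1}), now applied to the composition $f_\alpha$ in place of $f_1^n$; and (vii)$\Leftrightarrow$(viii), (ix)$\Leftrightarrow$(x) follow from Lemma \ref{7}, which applies because $\sum$ is countable, so any enumeration $\sum=\{\alpha_1,\alpha_2,\dots\}$ turns a finite subset $\sum'$ into an initial segment and conversely. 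To finish the core I would run the cycle (ix)$\Rightarrow$(vi)$\Rightarrow$(v)$\Rightarrow$(iv)$\Rightarrow$(iii) exactly as before: for (ix)$\Rightarrow$(vi), each $f_\alpha$ is continuous so $f_\alpha^{-1}(U)$ is open, and a countable base argument identifies $T_e(f_{1,\infty})$ with $\bigcap_{U\in\mathcal B}\bigcup_{\alpha\in\sum}f_\alpha^{-1}(U)$, a countable intersection of dense open sets, hence a dense $G_\delta$; (vi)$\Rightarrow$(v) is immediate, (v)$\Rightarrow$(iv) uses perfectness of $X$ as in Theorem \ref{8}, and (iv)$\Rightarrow$(iii) is trivial. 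Proposition \ref{P4} replaces Proposition \ref{P3} throughout.

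It remains to tie in (xi) and (xii), for which I would prove (ix)$\Leftrightarrow$(xi) and (xi)$\Leftrightarrow$(xii). For (ix)$\Rightarrow$(xi): if $U$ is opene and strong $-$ invariant, then iterating $f_n^{-1}(U)\subset U$ along a word $\alpha=(n_1,\dots,n_p)$ and using monotonicity of preimages gives $f_\alpha^{-1}(U)=f_{n_1}^{-1}\circ\cdots\circ f_{n_p}^{-1}(U)\subset U$, so $\bigcup_{\alpha\in\sum}f_\alpha^{-1}(U)\subset U$; since this union is dense by (ix), $U$ is dense (one may instead quote Proposition \ref{P2}(iii)). For (xi)$\Rightarrow$(ix): set $W=\bigcup_{\alpha\in\sum}f_\alpha^{-1}(U)$. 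Then $W$ is open, and it is strong $-$ invariant because $f_n^{-1}(f_\alpha^{-1}(U))=f_\beta^{-1}(U)\subset W$, where $\beta$ is obtained by prepending the letter $n$ to $\alpha$; moreover $W$ is nonempty since surjectivity of $f_1$ forces $f_1^{-1}(U)\neq\emptyset$. Thus $W$ is opene and strong $-$ invariant, hence dense by (xi), which is exactly (ix). Finally, (xi)$\Leftrightarrow$(xii) follows by complementation from the lemma characterizing strong $-$ invariant sets as complements of strong $+$ invariant sets: $A$ closed and strong $+$ invariant corresponds to $A^c$ open and strong $-$ invariant, and ``$A^c$ dense'' is the same as ``$A$ has empty interior'', i.e.\ $A$ nowhere dense, the alternative $A^c=\emptyset$ being $A=X$.

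The step I expect to require the most care is the linkage of (xi) to the core, and in particular the role of the surjectivity hypothesis, which is absent from Theorem \ref{8}. Surjectivity is used precisely to guarantee that the open strong $-$ invariant set $W$ above is nonempty, so that the hypothesis (xi) may legitimately be applied to it; without it, $W$ could be empty and the implication (xi)$\Rightarrow$(ix) would break. The remaining delicate bookkeeping is purely combinatorial: verifying that $f_\alpha^{-1}$ is the reverse-order composition of the $f_{n_i}^{-1}$, and that prepending or appending a letter to $\alpha$ corresponds to composing with $f_n^{-1}$ or $f_n$ on the correct side, which is what makes $W$ strong $-$ invariant and the forward union $\bigcup_{\alpha\in\sum}f_\alpha(U)$ strong $+$ invariant. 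I would also note explicitly that invoking Lemma \ref{7} on the index set $\sum$ is justified only because $\sum$ is countable.
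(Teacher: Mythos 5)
Your proposal matches the paper's proof essentially step for step: the paper likewise disposes of conditions (i)--(x) by declaring the argument ``similar to the proof of Theorem \ref{8}'' and then proves exactly the two equivalences (ix)$\Leftrightarrow$(xi) and (xi)$\Leftrightarrow$(xii), the former via the inclusion $\bigcup_{\alpha\in\sum}f_{\alpha}^{-1}(U)\subset U$ for strong $-$ invariant opene $U$ together with the observation that $\bigcup_{\alpha\in\sum}f_{\alpha}^{-1}(U)$ is itself opene and strong $-$ invariant, and the latter by complementation. The extra details you supply --- the countability of $\sum$ needed to invoke Lemma \ref{7}, and surjectivity guaranteeing $\bigcup_{\alpha\in\sum}f_{\alpha}^{-1}(U)\neq\emptyset$ in (xi)$\Rightarrow$(ix) --- are points the paper leaves implicit, and they sharpen rather than change its argument.
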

\begin{proof}
	We prove (ix)$\Leftrightarrow$(xi) and (xi)$\Leftrightarrow$(xii). The proof of the equivalence of first 10 conditions is similar to the proof of Theorem \ref{8}.\\
	(ix)$\Leftrightarrow$(xi): First we prove (ix)$\Rightarrow$(xi). Suppose $U\subset X$ is opene and strong $-$ invariant, then $f_n^{-1}(U)\subset U$ for all $n\in \mathbb{N}$. Therefore $f_{\alpha}^{-1}(U)\subset U$ for all $\alpha\in \sum$. Hence $\bigcup \limits_{\alpha\in\sum}f_{\alpha}^{-1}(U)\subset U$. By condition (ix), $\bigcup \limits_{\alpha\in\sum}f_{\alpha}^{-1}(U)\subset U$ which implies that $U$ is dense in $X$.\\
	For (xi)$\Rightarrow$(ix): We see that for any opene subset $U\subset X$, $\bigcup\limits_{\alpha\in\sum}f_{\alpha}^{-1}(U)$ is opene and strong $-$ invariant subset of $X$. Hence, $\bigcup\limits_{\alpha\in\sum}f_{\alpha}^{-1}(U)$ is dense in $X$  by (xi).\\
	(xi)$\Leftrightarrow$(xii): First we prove (xii)$\Rightarrow$(xi). Suppose $U\subset X$ is opene and strong $-$ invariant, then $U^c$ is closed and strong + invariant. Then by condition (xii), either $U^c=X$ or $U^c$ is nowhere dense in $X$. If $U^c=X$ then $U=\emptyset$ which is not possible. Therefore, $U^c$ is nowhere dense in $X$ which implies $U$ is dense in $X$.\\
	For (xi)$\Rightarrow$(xii): Suppose $A$ is closed and strong + invariant, then $A^c$ is open and $-$ invariant. If $A^c$ is empty then $A=X$. If $A^c\neq \emptyset$ then by condition (xi), $A^c$ is dense in $X$. Hence, $A$ is nowhere dense in $X$ since the complement of a dense open subset is nowhere dense.
\end{proof}
\subsection{Strong transitivity and very strong transitivity}
In this section, we define some notions which are strengthen to topological transitivity.
\begin{definition}[Strongly transitive] \normalfont
	A system $(X,f_{1,\infty})$ is called \emph{strongly transitive} if for every opene subset $V\subset X$, $\bigcup\limits_{n=1}^{\infty}f_1^n(V)=X$.
\end{definition}
\begin{definition}[Strongly extended transitive] \normalfont
	A system $(X,f_{1,\infty})$ is called \emph{strongly extended transitive} if for every opene subset $V\subset X$, $\bigcup\limits_{\alpha\in\sum}f_{\alpha}(V)=X$.
\end{definition}

\begin{definition}[Very strongly transitive] \normalfont
	A system $(X,f_{1,\infty})$ is called \emph{very strongly transitive} if for every opene subset $V\subset X$, there exists $k\in \mathbb{N}$ such that  $\bigcup\limits_{n=1}^{k}f_1^n(V)=X$.
\end{definition}
From the definitions it is easy to observe that
\begin{enumerate}[label=\upshape(\roman*), leftmargin=*, widest=iii]
	\item Very strong transitivity $\Rightarrow$ strong transitivity $\Rightarrow$ topological transitivity.
	\item Strong transitivity $\Rightarrow$ strong extended transitivity $\Rightarrow$ extended transitivity.
\end{enumerate}
We now provide some equivalent conditions for strong transitivity in Theorem \ref{10}. We will give such equivalent conditions for strong extended transitivity in Theorem \ref{11} and for very strong transitivity in Theorem \ref{12}.

\begin{theorem}\label{10}
	For an NDDS $(X,f_{1,\infty})$ the following are equivalent:
	\begin{enumerate}[label=\upshape(\roman*), leftmargin=*, widest=iii]
		\item  The system $(X,f_{1,\infty})$ is strongly transitive.
		\item  For every opene set $U\subset X$ and every point $x\in X$, there exists $n\in \mathbb{N}$ such that $x\in f_1^n(U)$.
		\item  For every opene set $U\subset X$ and every point $x\in X$, the set $N(U,x)$ is nonempty.
		\item  For every $x\in X$, the negative orbit $ O^-(x,f_{1,\infty})$ is dense in $X$.
		\item  For every $x\in X$, and $\epsilon>0$, there exists $N\in \mathbb{N}$ such that $O_N^-(x,f_{1,\infty})$ is $\epsilon$ dense in $X$.
	\end{enumerate}
\end{theorem}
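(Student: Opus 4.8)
The plan is to prove Theorem \ref{10} by establishing a cycle of implications among the five conditions, exploiting the definitional equivalences that come for free. First I would observe that (ii) and (iii) are literally the same statement, since by definition $N(U,x) = \{n \in \mathbb{N} : x \in f_1^n(U)\}$; so (ii)$\Leftrightarrow$(iii) requires no argument. Likewise, unwinding the definition of the negative orbit $O^-(x,f_{1,\infty}) = \bigcup_{n\in\mathbb{N}} f_1^{-n}(x)$, the statement that $x \in f_1^n(U)$ for some $n$ is equivalent, via the basic equivalence (\ref{2.1}), to $U \cap f_1^{-n}(x) \neq \emptyset$ for some $n$, i.e. to $O^-(x,f_{1,\infty}) \cap U \neq \emptyset$. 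Since $U$ ranges over all opene sets, this is precisely density of $O^-(x,f_{1,\infty})$. This gives the chain (ii)$\Leftrightarrow$(iii)$\Leftrightarrow$(iv) quickly.

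The substantive equivalence to pin down is (i)$\Leftrightarrow$(ii). For (i)$\Rightarrow$(ii), strong transitivity says $\bigcup_{n=1}^\infty f_1^n(V) = X$ for every opene $V$; applying this with $V = U$ and noting that an arbitrary $x \in X$ lies in the union means $x \in f_1^n(U)$ for some $n$, which is exactly (ii). For (ii)$\Rightarrow$(i), fix an opene $U$ and an arbitrary $x$; condition (ii) gives $n$ with $x \in f_1^n(U)$, and since $x$ was arbitrary every point of $X$ is captured, so $\bigcup_{n=1}^\infty f_1^n(U) = X$. Thus (i) and (ii) say the same thing with quantifiers reordered, and the equivalence is essentially formal.

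The remaining link is (iv)$\Leftrightarrow$(v), relating density of the full negative orbit to $\epsilon$-density of the partial negative orbits $O_N^-(x,f_{1,\infty}) = \bigcup_{1\le n\le N} f_1^{-n}(x)$. Here I would invoke Lemma \ref{7}(i) with $U_n := f_1^{-n}(x)$: that lemma states that $\bigcup_{n=1}^\infty U_n$ is dense if and only if for every $\epsilon > 0$ there exists $N$ with $\bigcup_{n=1}^N U_n$ being $\epsilon$ dense. Since $\bigcup_{n=1}^\infty f_1^{-n}(x) = O^-(x,f_{1,\infty})$ and $\bigcup_{n=1}^N f_1^{-n}(x) = O_N^-(x,f_{1,\infty})$, this is exactly the equivalence (iv)$\Leftrightarrow$(v), so it is a direct application of an already-established lemma.

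I do not expect any serious obstacle here, since every step reduces either to a definition or to the cited Lemma \ref{7}; the main thing to be careful about is the quantifier bookkeeping in (i)$\Leftrightarrow$(ii) — making explicit that ``for every opene $V$, the union over $n$ equals $X$'' and ``for every opene $U$ and every point $x$, some $f_1^n(U)$ contains $x$'' are logically identical once one recognizes that asserting a union equals $X$ is the same as asserting membership of each point. One modest subtlety worth flagging is that, unlike Theorem \ref{8}, this theorem assumes neither perfectness of $X$ nor surjectivity of the $f_n$, so I would double-check that none of the steps secretly need those hypotheses; inspecting the argument, the equivalences rest only on (\ref{2.1}) and Lemma \ref{7}, neither of which requires them, so the clean statement stands.
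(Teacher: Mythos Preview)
Your proposal is correct and follows essentially the same route as the paper: both treat (i)$\Leftrightarrow$(ii) and (ii)$\Leftrightarrow$(iii) as definitional, establish (ii)$\Leftrightarrow$(iv) by unpacking $x\in f_1^n(U)\Leftrightarrow U\cap f_1^{-n}(x)\neq\emptyset$ (the paper phrases this step by contradiction, you do it directly via (\ref{2.1}), but it is the same content), and obtain (iv)$\Leftrightarrow$(v) from Lemma~\ref{7}. Your remark that neither perfectness nor surjectivity is needed is also accurate.
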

\begin{proof}
	It is straight forward to observe (i)$\Leftrightarrow$(ii) from the definition of strong transitivity.\\
	(ii)$\Leftrightarrow$(iii) follows from the definition of $N(U,x)$.\\
	(ii)$\Leftrightarrow$(iv): Assume (ii) and suppose $O^-(x,f_{1,\infty})$ is not dense in $X$. Then there exists opene subset $V\subset X$ such that $\bigcup\limits_{n\in \mathbb{N}}f_1^{-n}(x)\cap V=\emptyset$ which implies that $f_1^{-n}(x)\cap V=\emptyset$, for all $n\in \mathbb{N}$. i.e., $x\notin f_1^n(V)$ for any $n\in \mathbb{N}$. This contradicts condition (ii). Hence, $O^-(x,f_{1,\infty})$ is dense in $X$. Converse can be proved along the same.\\
	(iv)$\Leftrightarrow$(v) follows by Lemma \ref{7}.
\end{proof}
\begin{corollary}
	Let $(X,f_{1,\infty})$ be an NDDS. If the system is strongly transitive, then every nonempty, $-$ invariant subset $U$ of $X$ is dense in $X$.
\end{corollary}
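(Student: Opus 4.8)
The plan is to reduce the statement to the two negative-orbit characterizations already in hand: the equivalence between strong transitivity and density of negative orbits furnished by Theorem \ref{10}, together with the description of $-$ invariant sets as exactly those sets that absorb the negative orbit of each of their points (the earlier remark on $-$ invariant sets, equivalently reprovable in one line from the definition). Once these two facts are lined up, the corollary falls out by a simple containment argument, so I expect essentially no obstacle here.

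Concretely, I would argue as follows. Assume $(X,f_{1,\infty})$ is strongly transitive and let $U\subset X$ be nonempty and $-$ invariant. First fix any point $x\in U$, which exists since $U$ is nonempty. Next I would observe that $-$ invariance gives $f_1^{-n}(U)\subset U$ for all $n\in\mathbb{N}$, and since $\{x\}\subset U$ we get $f_1^{-n}(x)\subset f_1^{-n}(U)\subset U$ for every $n$; taking the union over $n$ yields $O^-(x,f_{1,\infty})=\bigcup_{n\in\mathbb{N}}f_1^{-n}(x)\subset U$. (This is exactly the remark characterizing $-$ invariant sets via negative orbits; I would either cite it or reproduce this one-line inclusion in place.) Then, invoking the equivalence (i)$\Leftrightarrow$(iv) of Theorem \ref{10}, strong transitivity guarantees that $O^-(x,f_{1,\infty})$ is dense in $X$. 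Since a superset of a dense set is dense, the inclusion $O^-(x,f_{1,\infty})\subset U$ forces $U$ to be dense in $X$, which is the desired conclusion.

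The only point that requires any care is making sure the chosen point $x$ lies in $U$ so that its negative orbit is captured by $U$; this is precisely why nonemptiness of $U$ is needed as a hypothesis. Beyond that the proof is a direct chaining of two equivalences, and I would present it in three short sentences rather than belabor any estimate. If one prefers to avoid citing the remark, the inclusion $O^-(x,f_{1,\infty})\subset U$ can be derived inline from the definition of $-$ invariance as indicated above, keeping the argument self-contained.
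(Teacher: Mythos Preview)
Your proof is correct and essentially identical to the paper's: pick $x\in U$, use $-$ invariance to get $O^-(x,f_{1,\infty})\subset U$ (the paper cites the earlier remark for this, while you reprove it inline), and then apply condition (iv) of Theorem \ref{10} to conclude that $U$ contains a dense set. There is no meaningful difference in approach.
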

\begin{proof}
	Let $x\in U$. Since $U$ is $-$ invariant, $ O^-(x,f_{1,\infty})\subset U$. By the condition (iv) of Theorem \ref{10}, $ O^-(x,f_{1,\infty})$ is dense in $X$. Hence, $U$ is dense in $X$.
\end{proof}

\begin{theorem}\label{11}
	For an NDDS $(X,f_{1,\infty})$ the following are equivalent:
	\begin{enumerate}[label=\upshape(\roman*), leftmargin=*, widest=iii]
		\item  The system $(X,f_{1,\infty})$ is strongly extended transitive.
		\item For every opene set $U\subset X$ and every point $x\in X$, there exists $\alpha\in \sum$ such that $x\in f_{\alpha}(U)$.
		\item  For every opene set $U\subset X$ and every point $x\in X$, the set $N_e(U,x)$ is nonempty.
		\item  For every $x\in X$, the extended negative orbit $J^-(x)$ is dense in $X$.
		\item  For every $x\in X$, and $\epsilon>0$, there exists a finite subset $\sum'\subset \sum$ such that $J_{\sum'}^-(x)$ is $\epsilon$ dense in $X$.
		\item  If $U\subset X$ is nonempty and strong $-$ invariant, then $U$ is dense in $X$.
	\end{enumerate}
\end{theorem}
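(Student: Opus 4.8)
The plan is to run the proof along the same lines as that of Theorem \ref{10}, simply replacing the orbit $O(x,f_{1,\infty})$ and the iterates $f_1^n$ by the extended orbit $J^-(x)$ and the maps $f_\alpha$, $\alpha\in\sum$, and then to fold in the new condition (vi) using Proposition \ref{P2}. The equivalences (i)$\Leftrightarrow$(ii)$\Leftrightarrow$(iii) are purely definitional and I would dispatch them in a line or two: strong extended transitivity asserts $\bigcup_{\alpha\in\sum}f_\alpha(V)=X$ for every opene $V$, which unwinds to ``for every opene $U$ and every $x\in X$ there is $\alpha\in\sum$ with $x\in f_\alpha(U)$'', that is, (ii); and since $x\in f_\alpha(U)$ means exactly $f_\alpha(U)\cap\{x\}\neq\emptyset$, i.e. $\alpha\in N_e(U,x)$, condition (iii) is just a restatement of (ii) via the definition of $N_e(U,x)$.

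The bridge to the extended negative orbit is the observation that $J^-(x)\cap U\neq\emptyset$ holds if and only if there exist $u\in U$ and $\alpha\in\sum$ with $f_\alpha(u)=x$, i.e. if and only if there is $\alpha$ with $x\in f_\alpha(U)$. Hence ``$J^-(x)$ meets every opene set'' is literally condition (ii) read for a fixed $x$, which gives (ii)$\Leftrightarrow$(iv); this is the direct analogue of the contradiction argument used for (ii)$\Leftrightarrow$(iv) in Theorem \ref{10}, and it requires no hypothesis on the maps. For (iv)$\Leftrightarrow$(v) I would exploit that $\sum$ is countable: enumerating $\sum=\{\alpha_1,\alpha_2,\dots\}$ and setting $U_k=f_{\alpha_k}^{-1}(x)$ gives $J^-(x)=\bigcup_{k}U_k$, so Lemma \ref{7}(i) converts ``$J^-(x)$ dense'' into ``for every $\epsilon>0$ some finite union $\bigcup_{k\leq N}U_k=J_{\sum'}^-(x)$ is $\epsilon$ dense''. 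Since being $\epsilon$ dense is monotone under enlarging $\sum'$, the initial segments produced by Lemma \ref{7} match the arbitrary finite $\sum'$ appearing in (v).

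It remains to weave in (vi), and this is where Proposition \ref{P2} carries the load. The implication (iv)$\Rightarrow$(vi) is clean: if $U$ is nonempty and strong $-$ invariant, pick $x\in U$; by Proposition \ref{P2}(iii) we have $J^-(x)\subset U$, and $J^-(x)$ is dense by (iv), so $U$ is dense. For the reverse (vi)$\Rightarrow$(iv) I fix $x\in X$; by Proposition \ref{P2}(ii) the set $J^-(x)$ is strong $-$ invariant, so (vi) delivers density as soon as I know $J^-(x)\neq\emptyset$. The nonemptiness is the one genuinely delicate point, because the theorem assumes neither perfectness of $X$ nor surjectivity of the $f_n$. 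I expect to settle it with the following observation: if $J^-(x)=\emptyset$ then $f_n^{-1}(x)=\emptyset\subset\{x\}$ for every $n$, so $\{x\}$ is itself a nonempty strong $-$ invariant set, whence (vi) forces $\{x\}$ to be dense and hence $X=\{x\}$, a one-point space in which all of (i)--(vi) hold trivially. Thus, away from the degenerate one-point case, (vi) already guarantees $J^-(x)\neq\emptyset$, and the argument closes.

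The main obstacle is therefore not a hard estimate but the bookkeeping needed to bring (vi) into the equivalence in the absence of a surjectivity hypothesis; the singleton argument above is the step I would rely on, and I would state it explicitly so that the one-point space is handled on its own rather than quietly assumed away.
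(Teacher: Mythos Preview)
Your proof is correct and follows essentially the same route as the paper's: the paper dispatches (i)$\Leftrightarrow$(ii)$\Leftrightarrow$(iii) as definitional, proves (ii)$\Leftrightarrow$(iv) by the same contradiction argument you sketch, invokes Lemma \ref{7} for (iv)$\Leftrightarrow$(v), and for (iv)$\Leftrightarrow$(vi) appeals to Proposition \ref{P2} exactly as you do. Your treatment of (vi)$\Rightarrow$(iv) is in fact more careful than the paper's, which simply asserts that $J^-(x)$ is strong $-$ invariant and applies (vi) without addressing the nonemptiness hypothesis; your singleton argument fills a gap the paper leaves implicit.
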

\begin{proof}
	It is straight forward to observe (i)$\Leftrightarrow$(ii) from the definition of strong extended transitivity.\\
	(ii)$\Leftrightarrow$(iii) follows from the definition of $N_e(U,x)$.\\
	(ii)$\Leftrightarrow$(iv): Assume (ii) and suppose $J^-(x)$ is not dense in $X$. Then there exists opene $V\subset X$ such that $\bigcup\limits_{\alpha\in \sum}f_{\alpha}^{-1}(x)\cap V=\emptyset$ which implies that $f_{\alpha}^{-1}(x)\cap V=\emptyset$, for all $\alpha\in \sum$. i.e., $x\notin f_{\alpha}(V)$ for any $\alpha\in \sum$. This contradicts condition (ii). Hence, $J^-(x)$ is dense in $X$. Converse can be proved along the same.\\
	(iv)$\Leftrightarrow$(v) follows by Lemma \ref{7}.\\
	(iv)$\Leftrightarrow$(vi): $J^-(x)$ is strong $-$ invariant and if $x\in U$ and $U$ is strong $-$ invariant then $J^-(x)\subset U$, by Proposition \ref{P2}.
\end{proof}

\begin{theorem}\label{12}
	For an NDDS $(X,f_{1,\infty})$ the following are equivalent:
	\begin{enumerate}[label=\upshape(\roman*), leftmargin=*, widest=iii]
		\item  The system $(X,f_{1,\infty})$ is very strongly transitive.
		\item  For all $\epsilon>0$, there exists $N\in \mathbb{N}$ such that $O_N^-(x,f_{1,\infty})$ is $\epsilon$ dense in $X$ for every $x\in X$.
	\end{enumerate}
\end{theorem}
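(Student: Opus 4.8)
The plan is to reformulate condition (ii) in terms of forward images of small balls, and then compare this reformulation directly with the definition of very strong transitivity. First I would record the basic translation: for a fixed $N\in\mathbb{N}$, an open ball $B=B(y,\epsilon)$, and a point $x\in X$, equation (\ref{2.1}) gives $f_1^{-n}(x)\cap B\neq\emptyset \Leftrightarrow x\in f_1^n(B)$. Hence $O_N^-(x,f_{1,\infty})$ is $\epsilon$ dense if and only if for every $y\in X$ there exists $n\leq N$ with $x\in f_1^n(B(y,\epsilon))$, i.e. $x\in\bigcup_{n=1}^N f_1^n(B(y,\epsilon))$. Since the quantifiers over $x$ and over $y$ are both universal, I may swap them, so that condition (ii) is equivalent to the statement (ii$'$): for every $\epsilon>0$ there exists $N\in\mathbb{N}$ such that $\bigcup_{n=1}^N f_1^n(B(y,\epsilon))=X$ for every $y\in X$. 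This reduces the theorem to proving (i)$\Leftrightarrow$(ii$'$).

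The implication (ii$'$)$\Rightarrow$(i) is the routine direction. Given any opene $V\subset X$, I pick a ball $B(y,\epsilon)\subset V$, which exists because $V$ is open. Applying (ii$'$) at this $\epsilon$ yields an $N$ with $\bigcup_{n=1}^N f_1^n(B(y,\epsilon))=X$, and since forward images respect inclusions, $\bigcup_{n=1}^N f_1^n(V)\supset\bigcup_{n=1}^N f_1^n(B(y,\epsilon))=X$. Thus the system is very strongly transitive with $k=N$.

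The main obstacle is the reverse implication (i)$\Rightarrow$(ii$'$), where the ball-by-ball bound supplied by very strong transitivity must be upgraded to a bound $N$ uniform over all balls of a fixed radius $\epsilon$; here compactness of $X$ is essential. Fix $\epsilon>0$ and cover $X$ by finitely many balls $B(y_1,\epsilon/2),\dots,B(y_m,\epsilon/2)$. For each $i$, very strong transitivity supplies $k_i\in\mathbb{N}$ with $\bigcup_{n=1}^{k_i} f_1^n(B(y_i,\epsilon/2))=X$; set $N=\max_i k_i$. For an arbitrary ball $B(y,\epsilon)$, I choose $i$ with $d(y,y_i)<\epsilon/2$, which is possible since the half-radius balls cover $X$, so that $B(y_i,\epsilon/2)\subset B(y,\epsilon)$ by the triangle inequality. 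Monotonicity of the forward images then gives $\bigcup_{n=1}^N f_1^n(B(y,\epsilon))\supset\bigcup_{n=1}^{k_i} f_1^n(B(y_i,\epsilon/2))=X$, which establishes (ii$'$) and completes the proof. The one delicate point I would be careful about is to take the cover at \emph{half} the radius, so that every ball of radius $\epsilon$ is guaranteed to contain a cover element, which is exactly what makes the single bound $N$ work simultaneously for all such balls.
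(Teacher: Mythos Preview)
Your proof is correct and follows essentially the same approach as the paper: both directions hinge on the compactness trick of covering $X$ by finitely many $\epsilon/2$ balls, applying very strong transitivity to each, and taking the maximum of the resulting indices. Your preliminary reformulation of (ii) as the forward-image statement (ii$'$) is a clean presentational device, but the underlying argument matches the paper's proof.
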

\begin{proof}
	First we will prove (i)$\Rightarrow$(ii). Let $\epsilon>0$ and consider a cover $\{V_i\}_{i\in \mathbb{I}}$ of $X$ by $\epsilon/2$ balls. Since $X$ is compact, there exists a finite subcover say $\{V_1,V_2,\dots ,V_m\}$. 
	Since $(X,f_{1,\infty})$ is very strong transitive for every $i\in \{1,2,\dots,m\}$, there exists $N_i\in \mathbb{N}$ such that $\bigcup\limits_{n=1}^{N_i}f_1^n(V_i)=X$, for $i=1,2,\dots, m$. Take $N=max\{N_1,N_2,\dots, N_m\}$. Then $\bigcup\limits_{n=1}^{N}f_1^n(V_i)=X$ for $i=1,2,\cdots, m$. Therefore, for every $x\in X$ there exists $i\in \{1, 2,\dots, m\}$ and $m\in \{1, 2, \cdots, N\}$ such that $x\in f_1^n(V_i)$. i.e., $f_1^{-n}(x)\cap V_i \neq \emptyset$. Hence, $O_N^-(x,f_{1,\infty})$ intersects $V_i$ for every $i\in \{1,2,\dots,m\}$. Therefore, $O_N^-(x,f_{1,\infty})$ is dense in $X$.\\  
	For (ii)$\Rightarrow$(i): Let $\epsilon>0$ and let $N\in\mathbb{N}$ be such that $O_N^-(x,f_{1,\infty})$ is $\epsilon$ dense in $X$ for every $x\in X$. Let $V$ be an $\epsilon$ ball in $X$. Then for every $x\in X$, $f_1^{-n}(x)\cap V\neq\emptyset$ for some $n$ with $1\leq n\leq N$. Therefore $\bigcup\limits_{n=1}^N f_1^n (V)=X$. Also, this works for all $\epsilon>0$ and hence is true for every opene $V\subset X$. This implies that the system is very strongly transitive.
\end{proof}
For surjective open maps we see in the next theorem that very strong transitivity is equivalent to strong transitivity.
\begin{theorem}
	Let $(X,f_{1,\infty})$ be an NDDS and $f_n$ is surjective open map for every $n\in \mathbb{N}$. Then the following are equivalent:
	\begin{enumerate}[label=\upshape(\roman*), leftmargin=*, widest=iii]
		\item The system $(X,f_{1,\infty})$ is very strongly transitive.
		\item The system $(X,f_{1,\infty})$ is strongly transitive.
		\item There is no proper, closed, strong $-$ invariant subset of $X$.
	\end{enumerate}
\end{theorem}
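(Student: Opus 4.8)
The plan is to prove $(i)\Leftrightarrow(ii)$, then $(ii)\Rightarrow(iii)$, and finally $(iii)\Rightarrow(ii)$; together these yield the three-fold equivalence. The implication $(i)\Rightarrow(ii)$ is immediate, since very strong transitivity is strong transitivity with the extra demand that the covering union be finite. For $(ii)\Rightarrow(i)$ I would fix an opene $U\subset X$; as each $f_n$ is open, every iterate $f_1^n=f_n\circ\cdots\circ f_1$ is open, so each $f_1^n(U)$ is open in $X$. Strong transitivity gives $\bigcup_{n=1}^{\infty}f_1^n(U)=X$, and since $X$ is compact, Lemma \ref{7}(ii) yields an $N\in\mathbb{N}$ with $\bigcup_{n=1}^{N}f_1^n(U)=X$, which is precisely very strong transitivity. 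Thus $(i)\Leftrightarrow(ii)$, and it is openness (rather than surjectivity) that supplies the finiteness.

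For $(ii)\Rightarrow(iii)$ I would argue by contradiction using complementation. Suppose $A\subset X$ is a proper, closed, strong $-$ invariant set, and put $U:=A^c$, which is opene. By the lemma identifying strong $+$ invariant sets with the complements of strong $-$ invariant sets, $U$ is strong $+$ invariant, i.e. $f_n(U)\subset U$ for every $n$. A short induction then gives $f_1^n(U)\subset U$ for all $n$, so $\bigcup_{n=1}^{\infty}f_1^n(U)\subset U\subsetneq X$, contradicting strong transitivity. Hence no proper, closed, strong $-$ invariant set can exist.

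The substantive direction is $(iii)\Rightarrow(ii)$. Given an opene $U\subset X$, the natural object to consider is the extended image $W:=\bigcup_{\alpha\in\sum}f_\alpha(U)=\bigcup_{u\in U}J(u)$. Each $f_\alpha$ is open, so $W$ is open, and $W$ is strong $+$ invariant by Proposition \ref{P1}, being a union of strong $+$ invariant extended orbits. Therefore $W^c$ is closed and strong $-$ invariant with $W^c\neq X$, and $(iii)$ forces $W^c=\emptyset$, that is $W=X$. Compactness of $X$ together with openness of the sets $f_\alpha(U)$ then lets me extract a finite $\sum'\subset\sum$ with $\bigcup_{\alpha\in\sum'}f_\alpha(U)=X$.

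Here lies the main obstacle. The construction just described controls the extended, all-words dynamics; in effect it shows that $(iii)$ is equivalent to strong extended transitivity (Theorem \ref{11}), whereas strong transitivity concerns only the diagonal compositions $f_1^n=f_n\circ\cdots\circ f_1$. Passing from arbitrary word images $f_\alpha(U)$ to these specific iterates is the delicate point: strong $-$ invariance constrains every individual $f_n^{-1}$, and hence the entire extended orbit, while the diagonal sequence $(f_1^n)$ is far more rigid than the family of all words. To close the gap I would try to use surjectivity and openness of each $f_n$ either to rewrite the relevant $f_\alpha(U)$ through diagonal images, or to verify directly that $X\setminus\bigcup_{n=1}^{\infty}f_1^n(U)$ is itself strong $-$ invariant so that $(iii)$ applies to it. Carrying out this index bookkeeping, reconciling the fixed left-to-right order forced by $f_1^n$ with the unrestricted words that govern strong invariance, is the step I expect to be hardest and on which the whole equivalence rests.
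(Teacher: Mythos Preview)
Your $(i)\Leftrightarrow(ii)$ coincides with the paper's: openness of each $f_n$ makes $\{f_1^n(U)\}_n$ an open cover of $X$, and compactness extracts a finite subcover. For $(ii)\Rightarrow(iii)$ you argue by complementation, passing to the strong $+$ invariant open set $U=A^c$ and observing $\bigcup_n f_1^n(U)\subset U\subsetneq X$; the paper instead picks $x\in A$, notes $J^-(x)\subset A$ via Proposition~\ref{P2}, whence $O^-(x)\subset J^-(x)$ is not dense, contradicting Theorem~\ref{10}(iv). Both routes are correct and comparably short.

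Your hesitation about $(iii)\Rightarrow(ii)$ is exactly on target, and the paper does not resolve it. The paper's argument for $(iii)\Rightarrow(ii)$ runs as follows: if $B$ is nonempty and strong $-$ invariant then $\overline B$ is closed and strong $-$ invariant by Lemma~\ref{6}(iv) (this is where openness of the $f_n$ enters), so $(iii)$ forces $\overline B=X$; hence every nonempty strong $-$ invariant set is dense, and the paper then invokes ``condition (vi) of Theorem~\ref{11}'' to conclude. But Theorem~\ref{11} is the list of equivalences for \emph{strong extended transitivity}, and its condition (vi) is precisely the statement ``every nonempty strong $-$ invariant set is dense.'' Read literally, therefore, the paper's argument establishes only $(iii)\Rightarrow$(strong extended transitivity), which is the very gap you isolated. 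No bridge from the all-words images $f_\alpha(U)$ to the diagonal compositions $f_1^n$ is supplied; your proposed test set $X\setminus\bigcup_n f_1^n(U)$ is generally not strong $-$ invariant (since $f_m^{-1}\circ f_1^{-n}$ need not be of the form $f_1^{-k}$), and the paper offers no alternative device to close this step.
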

\begin{proof}
	(i)$\Leftrightarrow$(ii): We can see that (i)$\Rightarrow$(ii) is true by the definitions.\\
	To prove (ii)$\Rightarrow$(i): Assume that for every opene subset $U\subset X$, $\bigcup\limits_{n=1}^{\infty}f_1^n(U)=X$. Since $f_n$ is an open map for all $n\in \mathbb{N}$, then $f_1^n(U)$ is open for all $n\in \mathbb{N} $. Thus, $\bigcup\limits_{n=1}^{\infty}f_1^n(U)$ is an open cover of $X$ and compactness of $X$ implies that there exists $N\in \mathbb{N}$ such that $\bigcup\limits_{n=1}^{N}f_1^n(U)=X$. Hence, the system is very strongly transitive.\\
	(ii)$\Leftrightarrow$(iii): First we prove (ii)$\Rightarrow$(iii). Suppose $X$ contains a proper, closed and strong $-$ invariant subset $A\subset X$. Then $f_n^{-1}(A)\subset A$ for all $n\in \mathbb{N}$. If $x\in A$, then $J^-(x)\subset A$ by Proposition \ref{P2}. Since $A$ is not dense in $X$, $J^-(x)$ is not dense in $X$ and so, $O^-(x,{f_{1,\infty}})$ is not dense in $X$, which contradicts the condition (ii). Hence, $X$ does not contain a proper, closed, strong $-$ invariant subset.\\
	For (iii)$\Rightarrow$(ii): Suppose $B\subset X$ is nonempty and strong $-$ invariant. $\overline B$ is closed, nonempty and strong $-$ invariant set by condition (iv) of Lemma \ref{6}. From condition (iii), $\overline B=X$ and hence $B$ is dense in $X$. So, the system is strong transitive system by condition (vi) of Theorem \ref{11}.
\end{proof}
\subsection{Extended minimality}

\begin{definition}[Extended minimality] \normalfont
	A system $(X,f_{1,\infty})$ is called \emph{extended minimal} if there is no proper, nonempty, closed strong + invariant subset of $X$.
\end{definition}
In the following theorem we provide equivalent conditions for extended minimality of a system $(X,f_{1,\infty})$. 
\begin{theorem}\label{minimality}
	Let $(X,f_{1,\infty})$ be an NDDS. Then the following are equivalent:
	\begin{enumerate}[label=\upshape(\roman*), leftmargin=*, widest=iii]
		\item  The system $(X,f_{1,\infty})$ is extended minimal.
		\item  For every opene set $U\subset X$ and every point $x\in X$, there exists $\alpha\in \sum$ such that $f_{\alpha}(x)\in U$.
		\item For every opene set $U\subset X$ and every point $x\in X$, the set $N_e(x,U)$ is nonempty.
		\item  For every $x\in X$, the extended orbit $J(x)$ is dense in $X$.
		\item The set $T_e(f_{1,\infty})$ of all extended transitive points is equal to the entire space $X$.
		\item  For every opene set $U\subset X$, $\bigcup\limits_{\alpha\in\sum}f_{\alpha}^{-1}(U)=X$.
		\item  For every opene set $U\subset X$, there exists a finite subset $\sum'\subset \sum$ such that $\bigcup\limits_{\alpha\in\sum'}f_\alpha^{-1}(U)=X$.
		\item  If $A\subset X$ is nonempty, closed and strong + invariant, then $A=X$.
	\end{enumerate}
\end{theorem}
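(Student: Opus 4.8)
The plan is to run the proof as a single bundle of equivalences, splitting the eight conditions into two groups. Conditions (ii)--(vii) are all reformulations of the statement ``the extended orbit $J(x)$ is dense for every $x\in X$,'' while (i) and (viii) are literally the definition of extended minimality phrased two ways; the genuine content of the theorem is the bridge between these two groups, which I would establish as (iv)$\Leftrightarrow$(viii) using Proposition \ref{P1}.

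First I would dispose of the routine reformulations. The equivalence (i)$\Leftrightarrow$(viii) is immediate, since ``no proper nonempty closed strong + invariant set'' and ``every nonempty closed strong + invariant set equals $X$'' are the same assertion. Next, (ii)$\Leftrightarrow$(iii) is just the definition of $N_e(x,U)=\{\alpha\in\sum:f_\alpha(x)\in U\}$. For (ii)$\Leftrightarrow$(iv)$\Leftrightarrow$(v): the condition that for every opene $U$ there is $\alpha$ with $f_\alpha(x)\in U$ says precisely that $J(x)$ meets every opene set, i.e.\ $J(x)$ is dense; requiring this for every $x$ is (iv), and by the definition of $T_e(f_{1,\infty})$ this is exactly (v). For (ii)$\Leftrightarrow$(vi) I would use the elementary identity $f_\alpha(x)\in U\iff x\in f_\alpha^{-1}(U)$, so that ``for every $x$ there is $\alpha$ with $f_\alpha(x)\in U$'' becomes ``$x\in\bigcup_{\alpha\in\sum}f_\alpha^{-1}(U)$ for every $x$,'' that is, $\bigcup_{\alpha\in\sum}f_\alpha^{-1}(U)=X$. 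Finally (vi)$\Leftrightarrow$(vii): since each $f_\alpha$ is continuous, each $f_\alpha^{-1}(U)$ is open, so $\bigcup_{\alpha\in\sum}f_\alpha^{-1}(U)=X$ is an open cover of the compact space $X$, and Lemma \ref{7}(ii) upgrades it to a finite subcover, giving (vii); the reverse implication is trivial.

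The key step is the bridge (iv)$\Leftrightarrow$(viii). For (viii)$\Rightarrow$(iv), I would fix $x\in X$ and consider $\overline{J(x)}$: by Proposition \ref{P1}(ii) the set $J(x)$ is strong + invariant, and since $f_n(\overline{J(x)})\subset\overline{f_n(J(x))}\subset\overline{J(x)}$ by continuity, $\overline{J(x)}$ is a nonempty closed strong + invariant set; applying (viii) forces $\overline{J(x)}=X$, which is the density in (iv). Conversely, for (iv)$\Rightarrow$(viii), let $A$ be nonempty, closed, and strong + invariant, pick $x\in A$, and invoke Proposition \ref{P1}(iii) to get $J(x)\subset A$; then $X=\overline{J(x)}\subset\overline{A}=A$, so $A=X$.

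The main obstacle is precisely this bridge, and the crux is having the right invariance notion: the extended orbit $J(x)$ is strong + invariant (not merely + invariant), which is exactly what lets Proposition \ref{P1}(iii) identify the strong + invariant sets as the sets that swallow the extended orbits of their points. I would note that, unlike the closure results in Lemmas \ref{5} and \ref{6}, the strong + invariance of $\overline{J(x)}$ needs only continuity of the maps $f_n$ and not surjectivity, so no extra hypotheses on $(X,f_{1,\infty})$ are required. Once (iv)$\Leftrightarrow$(viii) is in place, combining it with the chain of reformulations above and with (i)$\Leftrightarrow$(viii) closes the loop through all eight conditions.
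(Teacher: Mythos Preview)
Your proof is correct and follows essentially the same architecture as the paper's: the same pairing of reformulations (ii)--(vii) around the density of $J(x)$, and the same bridge (iv)$\Leftrightarrow$(viii) via Proposition \ref{P1}. One point worth noting: for (viii)$\Rightarrow$(iv) the paper invokes Lemma \ref{6}(iii) to conclude that $\overline{J(x)}$ is strong + invariant, but that lemma carries a surjectivity hypothesis on the $f_n$ which is not assumed in the present theorem; your direct argument from continuity, $f_n(\overline{J(x)})\subset\overline{f_n(J(x))}\subset\overline{J(x)}$, sidesteps this and is the cleaner justification here.
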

\begin{proof}
	(i)$\Leftrightarrow$(viii) follows from the definition of extended minimality.\\
	(ii)$\Leftrightarrow$(iii) is trivial from the definition of $N_e(x,U)$.\\
	(ii)$\Leftrightarrow$(iv): We prove (ii)$\Rightarrow$(iv) and the proof of converse is similar. Suppose there exists $x\in X$ such that $J(x)$ is not dense in $X$.  i.e., there exists $x\in X$ such that $J(x)\cap U=\emptyset$, for some opene $U\subset X$. This implies that $f_{\alpha}(x)\notin U$, for any $\alpha\in \sum$, which contradicts condition (ii). \\
	(iv)$\Leftrightarrow$(v) follows from the definition of extended orbit and extended transitive point. \\
	(iv)$\Leftrightarrow$(vi): For (vi)$\Rightarrow$(iv) suppose $J(x)$ is not dense in $X$, then there exists an opene subset $B\subset X$ such that $J(x)\cap B=\emptyset$. This implies that $f_{\alpha}(x)\notin B$ for all $\alpha\in \sum $ and so $x\notin f_{\alpha}^{-1}(B)$ for all $\alpha\in\sum$. This is a contradiction to condition (vi). The converse is similar.\\
	(vi)$\Leftrightarrow$(vii): (vii)$\Rightarrow$(vi) is trivial and (vi)$\Rightarrow$(vii) follows from the compectness of $X$.\\
	(iv)$\Leftrightarrow$(viii): First we will prove (iv)$\Rightarrow$(viii). Let $A$ be a nonempty, closed and strong + invariant. If $x\in A$, then $f_{\alpha}(x)\in A$ for all ${\alpha}\in \sum $, which implies $J(x)\subset A$ by Proposition \ref{P1}. Since $J(x)$ is dense in $X$, $A$ is dense in $X$. Also, $A$ is closed hence, $A=\overline A=X$.\\
	To prove (viii)$\Rightarrow$(iv): For any $x\in X$, $J(x)$ is strong + invariant and nonempty by Proposition \ref{P1}. From condition (iii) of Lemma \ref{6}, $\overline {J(x)}$ is strong + invariant. So $\overline {J(x)}$ is nonempty, closed and strong + invariant, and so $\overline {J(x)}=X$ by condition (viii). Hence, $J(x)$ is dense in $X$ for every $x\in X$.
\end{proof}

\subsection{Exact transitivity and strong exact transitivity}
\begin{definition}[Exact and fully exact] \normalfont
	Let $(X,f_{1,\infty})$ be an NDDS. If for every pair of opene subsets $U,V\subset X$, there exists $N\in \mathbb{N}$ such that $f_1^N (U)\cap f_1^N(V)\neq \emptyset$, then the system is called \emph{exact}.The system is called \emph{fully exact} if for all opene $U,V\subset X$, there exists $N\in \mathbb{N}$ such that $(f_1^N (U)\cap f_1^N(V))^{\circ}\neq \emptyset$.
\end{definition}
For open maps, given any opene set $U\subset X,\  f_1^n(U)$ and $(f_1^n(U))^{\circ}$ are the same. Thus, in case of open maps, a system is exact if and only if it is fully exact.\\
Next, we give a result for exact systems and an equivalent condition for a system to be fully exact.

\begin{theorem}
	\begin{enumerate}[label=\upshape(\roman*), leftmargin=*, widest=iii]
		\item If a system $(X,f_{1,\infty})$ is exact and $f_n$ is injective for every $n\in \mathbb{N}$, then the system is trivial, i.e., $X$ is singleton.
		\item The system $(X,f_{1,\infty})$ is fully exact if and only if 
		for every opene subsets $U,V\subset X$ we have $(\bigcup \limits_{n=1}^{\infty}f_1^n (U)\cap f_1^n(V))^{\circ}\neq \emptyset$.
	\end{enumerate}
\end{theorem}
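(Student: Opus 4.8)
To prove (i), the plan is to exploit that injectivity of every $f_n$ makes each iterate $f_1^N=f_N\circ\cdots\circ f_1$ injective, and that injective maps preserve intersections, in the sense that $g(A)\cap g(B)=g(A\cap B)$ (the nontrivial inclusion $g(A)\cap g(B)\subset g(A\cap B)$ is exactly where injectivity is used). Assuming $X$ is not a singleton, I would pick distinct points $x,y$ and take the disjoint opene balls $U=B(x,r)$ and $V=B(y,r)$ with $r=d(x,y)/2$. Then for every $N$ we get $f_1^N(U)\cap f_1^N(V)=f_1^N(U\cap V)=f_1^N(\emptyset)=\emptyset$, which contradicts exactness. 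Hence $X$ must be a singleton. The only point needing care is the intersection-preserving identity for injective maps.

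For (ii), the forward implication is immediate: if the system is fully exact, then for a fixed opene pair $U,V$ there is some $N$ with $(f_1^N(U)\cap f_1^N(V))^{\circ}\neq\emptyset$; since $f_1^N(U)\cap f_1^N(V)\subset \bigcup_{n=1}^{\infty}(f_1^n(U)\cap f_1^n(V))$ and the interior operator is monotone, the interior of the union is nonempty.

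The substance is the converse, which I would prove by contraposition. Write $A_n=f_1^n(U)\cap f_1^n(V)$. The key preliminary observation is that each $f_1^n(U)$ is an $F_{\sigma}$ set: since $X$ is compact metric, the open set $U$ is $\sigma$-compact, as $U=\bigcup_{k}\{x:d(x,U^c)\geq 1/k\}$ is a countable union of compact sets, so its continuous image $f_1^n(U)$ is a countable union of compact, hence closed, sets. Consequently each $A_n$, and therefore $\bigcup_{n}A_n$, is a countable union of closed sets $C_{n,k}$. Now suppose the system is not fully exact, so there is an opene pair $U,V$ with $A_n^{\circ}=\emptyset$ for every $n$. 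Because $C_{n,k}\subset A_n$, each closed set $C_{n,k}$ has empty interior and is therefore nowhere dense, so $\bigcup_{n}A_n=\bigcup_{n,k}C_{n,k}$ is meager. As a compact metric space is a Baire space, a meager set has empty interior, giving $(\bigcup_{n}A_n)^{\circ}=\emptyset$ and completing the contrapositive.

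I expect the main obstacle to be precisely this $F_{\sigma}$ reduction. A naive application of the Baire category theorem to the closures $\overline{A_n}$ only produces a set with $(\overline{A_n})^{\circ}\neq\emptyset$, which is strictly weaker than the $A_N^{\circ}\neq\emptyset$ demanded by the definition of fully exactness (consider how a countable dense set has dense closure yet empty interior). Recognizing that continuous images of open sets are $F_{\sigma}$ is what upgrades ``empty interior of $A_n$'' to ``nowhere-denseness of the closed pieces $C_{n,k}$,'' and this is exactly what makes Baire's theorem applicable in the correct form.
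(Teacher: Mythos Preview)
Your proof is correct. Part (i) is identical to the paper's argument.

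For part (ii), both you and the paper invoke the Baire Category Theorem, but the technical device used to obtain closed sets differs. The paper does not decompose $f_1^n(U)$ as an $F_\sigma$; instead it first shrinks $U,V$ to closed subsets $P\subset U$, $Q\subset V$ with nonempty interior. Then each $f_1^n(P)\cap f_1^n(Q)$ is already compact, hence closed, so the hypothesis (applied to $P^\circ,Q^\circ$) yields a nonempty open set $W=\bigl(\bigcup_n f_1^n(P)\cap f_1^n(Q)\bigr)^\circ$, and Baire on the singly-indexed relatively closed cover $\{W\cap f_1^n(P)\cap f_1^n(Q)\}_{n}$ of $W$ produces an $n$ with $(f_1^n(P)\cap f_1^n(Q))^\circ\neq\emptyset$, hence $(f_1^n(U)\cap f_1^n(V))^\circ\neq\emptyset$. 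Your route---writing each open set as a countable union of compacta and applying Baire to the doubly-indexed family $\{C_{n,k}\}$---reaches the same conclusion; it is slightly heavier in bookkeeping but has the merit of making the role of $\sigma$-compactness of open sets in a compact metric space fully explicit, which is exactly the point you flagged as the main obstacle. The paper's shrink-to-closed maneuver is a touch more economical since it avoids the double index altogether.
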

\begin{proof}
	(i) If $X$ is not singleton, then there exists disjoint opene sets $U,V\subset X$. Since $f_n$ is injective for every $n\in \mathbb{N}$, we have $f_1^n (U)\cap f_1^n(V)= \emptyset$ for all $n\in \mathbb{N}$. This contradicts our assumption that $(X,f_{1,\infty})$ is exact. Hence, $X$ must be singleton.\\
	(ii) The necessity part is trivial. To prove the sufficiency part,
	assume that for any two opene subsets $U,V\subset X$ we have  $(\bigcup \limits_{n=1}^{\infty}f_1^n (U)\cap f_1^n(V))^{\circ}\neq \emptyset$. Fix two opene subsets $U,V\subset X$. Let $P\subset U, \ Q\subset V$ be closed sets with nonempty interior. By our assumption the open set $W= (\bigcup\limits _{n=1}^{\infty}f_1^n (P)\cap f_1^n(Q))^{\circ}$ is nonempty, and so it is a Baire space with a countable, relatively closed cover $\{W\cap f_1^n (P)\cap f_1^n(Q):n\in \mathbb{N}\}$. So by the Baire Category Theorem, some $f_1^n (P)\cap f_1^n(Q)$ has nonempty interior in $W$ and so in $X$. Hence $\bigcup \limits_{n=1}^{\infty}(f_1^n (P)\cap f_1^n(Q))^{\circ}\neq \emptyset$ which gives $(\bigcup\limits _{n=1}^{\infty}f_1^n (U)\cap f_1^n(V))^{\circ}\neq \emptyset$. Therefore $(X,f_{1,\infty})$ is fully exact.
\end{proof}
\begin{definition}[Exact transitivity and strong exact transitivity] \normalfont
	A system $(X,f_{1,\infty})$ is called \emph{exact transitive} if for every opene pair $U,V\subset X$, $\bigcup\limits _{n=1}^{\infty}(f_1^n (U)\cap f_1^n(V))$ is dense in $X$, and if $\bigcup \limits_{n=1}^{\infty}(f_1^n (U)\cap f_1^n(V))=X$, then the system is called \emph{strongly exact transitive}.
\end{definition}
Notice that strong exact transitivity implies exact transitivity.
\begin{remark}
	It is easy to see that
	\begin{enumerate}[label=\upshape(\roman*), leftmargin=*, widest=iii]
		\item  if $(X,f_{1,\infty})$ is exact transitive, then it is both exact and topologically transitive,
		\item if $(X,f_{1,\infty})$ is strongly exact transitive, then it is fully exact.
	\end{enumerate}
\end{remark}
Following result gives some characterizations of strongly exact transitive systems.
\begin{theorem}
	Let $(X,f_{1,\infty})$ be an NDDS. Then the following are equivalent:
	\begin{enumerate}[label=\upshape(\roman*), leftmargin=*, widest=iii]
		\item  The system $(X,f_{1,\infty})$ is strongly exact transitive.
		\item  For every pair of opene sets $U,V\subset X$, $\bigcup \limits_{n\in \mathbb{N}}(f_1^n\times f_1^n)(U\times V)$ contains the diagonal $Id_X$.
		\item  For every $x\in X$, the negative orbit $O_{f_{1,\infty}\times f_{1,\infty} }^-(x,x)$ is dense in $X\times X$.  
		\item For every $x\in X$ and opene subsets $U,V \subset X$, there exists $N\in \mathbb{N}$ such that $x\in f_1^N(U)\cap f_1^N(V)$.
	\end{enumerate}
\end{theorem}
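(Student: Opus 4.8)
The plan is to observe that all four conditions are reformulations of a single elementary statement, namely that for every opene pair $U,V\subset X$ and every point $x\in X$ there exists $n\in\mathbb{N}$ with $x\in f_1^n(U)\cap f_1^n(V)$. This is verbatim condition (iv), so I would organize the proof by establishing (i)$\Leftrightarrow$(iv), (ii)$\Leftrightarrow$(iv), and (iii)$\Leftrightarrow$(iv), each by unwinding the relevant definition. The argument is therefore a chain of direct translations rather than a substantive construction.

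For (i)$\Leftrightarrow$(iv): by definition the system is strongly exact transitive exactly when $\bigcup_{n=1}^{\infty}(f_1^n(U)\cap f_1^n(V))=X$ for every opene pair $U,V$. Since a union of subsets of $X$ equals $X$ precisely when every point lies in one of the members, this is the same as saying that for each $x$ there is an $n$ with $x\in f_1^n(U)\cap f_1^n(V)$, which is (iv). For (ii)$\Leftrightarrow$(iv): I would first record the product identity $(f_1^n\times f_1^n)(U\times V)=f_1^n(U)\times f_1^n(V)$. Then $\bigcup_{n}(f_1^n\times f_1^n)(U\times V)$ contains the diagonal $Id_X=\{(x,x):x\in X\}$ if and only if for every $x$ there is an $n$ with $(x,x)\in f_1^n(U)\times f_1^n(V)$, that is $x\in f_1^n(U)$ and $x\in f_1^n(V)$; this is again (iv).

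For (iii)$\Leftrightarrow$(iv), the only step that touches the structure of the product system, I would note that the $n$th iterate of $f_{1,\infty}\times f_{1,\infty}$ is $f_1^n\times f_1^n$, so the negative orbit of $(x,x)$ is $O^-_{f_{1,\infty}\times f_{1,\infty}}(x,x)=\bigcup_{n}\bigl(f_1^{-n}(x)\times f_1^{-n}(x)\bigr)$. Because the boxes $U\times V$ with $U,V$ opene form a base for $X\times X$, density of this negative orbit is equivalent to the requirement that each such box meet it, i.e. that for every opene $U,V$ there exist $a\in U$, $b\in V$ and $n$ with $f_1^n(a)=x=f_1^n(b)$; this says exactly $x\in f_1^n(U)\cap f_1^n(V)$, which is (iv) (with $x$ ranging over all of $X$). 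The main, and only mild, obstacle is to identify the iterates and the negative orbit of the product system correctly and to check density against basic open boxes rather than arbitrary open sets; once this is in place, every implication is an immediate rewriting between the union, diagonal, and negative-orbit formulations.
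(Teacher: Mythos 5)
Your proposal is correct and follows essentially the same route as the paper: both reduce conditions (i), (ii), and (iii) to (iv) by direct unwinding of the definitions, using the identity $(f_1^n\times f_1^n)(U\times V)=f_1^n(U)\times f_1^n(V)$ and checking density of the negative orbit of $(x,x)$ against basic open boxes. Your write-up is in fact slightly more careful than the paper's on the (iii)$\Leftrightarrow$(iv) step, where you explicitly identify the iterates of the product system and justify testing density only on boxes $U\times V$.
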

\begin{proof}
	To complete the proof we establish that (i), (ii) and (iii) are equivalent to (iv).
	(i)$\Leftrightarrow$(iv): The system $(X,f_{1,\infty})$ is strongly exact transitive.\\
	$\Leftrightarrow$ for every opene pair $U,V\subset X$, $\bigcup \limits_{n=1}^{\infty}(f_1^n(U)\cap f_1^n(V))=X$.\\
	$\Leftrightarrow$ for every $x\in X$ and opene $U,V \subset X$, there exists $N\in \mathbb{N}$ such that $x\in f_1^N(U)\cap f_1^N(V)$.\\
	(ii)$\Leftrightarrow$ (iv): For every pair of opene sets $U,V\subset X$, $\bigcup \limits_{n\in \mathbb{N}}(f_1^n\times f_1^n)(U\times V)$ contains the diagonal $Id_X$.\\ $\Leftrightarrow$ for every $x\in X$ and opene $U,V\subset X$, there exists $N\in \mathbb{N}$ such that $(x,x) \in f_1^N(U)\times f_1^N(V)$.\\
	$\Leftrightarrow$ for every $x\in X$ and opene $U,V\subset X$, there exists $N\in \mathbb{N}$ such that $x \in f_1^N(U)\cap f_1^N(V)$.\\
	(iii)$\Leftrightarrow$ (iv): For every $x\in X$, the negative orbit $O_{f_{1,\infty}\times f_{1,\infty} }^-(x,x)$ is dense in $X\times X$.  \\
	$\Leftrightarrow$ for every $x\in X$ and opene $U,V\subset X$ there exists $N\in \mathbb{N}$ such that $(x,x)\in (f_1^N\times f_1^N) (U,V) = (x,x)\in f_1^N(U)\times f_1^N(V)$. \\
	$\Leftrightarrow$ for every $x\in X$ and opene $U,V \subset X$, there exists $N\in \mathbb{N}$ such that $x\in f_1^N(U)\cap f_1^N(V)$.
\end{proof}
\subsection{Topologically mixing and locally eventually onto}
In this section, we introduce the notion of topologically mixing and locally eventually onto system and then we give some equivalent conditions for these dynamical properties.
\begin{definition}[Topologically mixing] \normalfont
	An NDDS $(X,f_{1,\infty})$ is called \emph{topologically mixing} if for every pair of opene subsets $U,V\subset X$, there exists $k\in \mathbb{N}$ such that $f_1^n(U)\cap V \neq \emptyset$ for all $n\geq k$. 
\end{definition}
\begin{definition}[Locally eventually onto] \normalfont
	If for every opene subset $V\subset X$, there exists $k\in \mathbb{N}$ such that $f_1^k(V)=X$, then the system is called \emph{locally eventually onto}.
\end{definition}
In autonomous discrete dynamical system, locally eventually onto implies topologically mixing but in non-autonomous discrete dynamical system, if $(X,f_{1,\infty})$ is locally eventually onto and each $f_n$ is surjective then the system is topologically mixing, strongly exact transitive and exact transitive.\\

\begin{theorem}
	For an NDDS $(X,f_{1,\infty})$ the following are equivalent:
	\begin{enumerate}[label=\upshape(\roman*), leftmargin=*, widest=iii]
		\item  The system is topologically mixing.
		\item  For every pair of opene subsets $U,V \subset X$, $N(U,V)$ is co-finite.
		\item  For every opene subset $U\subset X$, and $\epsilon>0$, there exists $N\in \mathbb{N}$ such that $f_1^{-n}(U)$ is $\epsilon$ dense in $X$ for all $n\geq N$.
		\item  For every opene subset $U\subset X$, and $\epsilon>0$, there exists $N\in \mathbb{N}$ such that $f_1^{n}(U)$ is $\epsilon$ dense in $X$ for all $n\geq N$.
	\end{enumerate}
\end{theorem}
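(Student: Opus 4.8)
The plan is to establish all equivalences by routing everything through condition (i) and exploiting the two descriptions of the hitting set $N(U,V)=\{n:f_1^n(U)\cap V\neq\emptyset\}=\{n:U\cap f_1^{-n}(V)\neq\emptyset\}$, together with a compactness argument that converts $\epsilon$ density into finitely many hitting conditions. I would prove (i)$\Leftrightarrow$(ii), then (i)$\Leftrightarrow$(iv), and finally (i)$\Leftrightarrow$(iii), which together cover all four statements.

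First I would dispose of (i)$\Leftrightarrow$(ii) directly from the definitions. Mixing asserts that for each opene pair $U,V$ there is $k$ with $f_1^n(U)\cap V\neq\emptyset$ for all $n\geq k$, which says precisely that $\{n:n\geq k\}\subset N(U,V)$, i.e.\ the complement of $N(U,V)$ is contained in $\{1,\dots,k-1\}$, so $N(U,V)$ is co-finite; the converse reading is identical.

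For (i)$\Rightarrow$(iv), fix an opene set $U$ and $\epsilon>0$. Since $X$ is compact, I would cover it by finitely many balls $B(x_1,\epsilon/2),\dots,B(x_m,\epsilon/2)$. Applying mixing to each pair $(U,B(x_i,\epsilon/2))$ yields $k_i$ with $f_1^n(U)\cap B(x_i,\epsilon/2)\neq\emptyset$ for all $n\geq k_i$; setting $N=\max_i k_i$, the set $f_1^n(U)$ meets every $B(x_i,\epsilon/2)$ once $n\geq N$. The key observation is that any ball $B(y,\epsilon)$ contains some $B(x_i,\epsilon/2)$: choosing $i$ with $y\in B(x_i,\epsilon/2)$, so $d(y,x_i)<\epsilon/2$, the triangle inequality gives $B(x_i,\epsilon/2)\subset B(y,\epsilon)$, whence $f_1^n(U)$ meets $B(y,\epsilon)$. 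Thus $f_1^n(U)$ is $\epsilon$ dense for all $n\geq N$. The converse (iv)$\Rightarrow$(i) is easier: given opene $U,V$, the set $V$ contains a ball of radius $\epsilon$ for some $\epsilon>0$, and once $f_1^n(U)$ is $\epsilon$ dense it meets that ball, forcing $f_1^n(U)\cap V\neq\emptyset$ for all large $n$.

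Finally, for (i)$\Leftrightarrow$(iii) I would run the same argument on the preimage side. Here $f_1^{-n}(U)\cap B(x_i,\epsilon/2)\neq\emptyset$ holds if and only if $n\in N(B(x_i,\epsilon/2),U)$, i.e.\ $f_1^n(B(x_i,\epsilon/2))\cap U\neq\emptyset$. Covering $X$ by $\epsilon/2$ balls and applying mixing to each pair $(B(x_i,\epsilon/2),U)$ produces, for all large $n$, a set $f_1^{-n}(U)$ meeting every $B(x_i,\epsilon/2)$, hence $\epsilon$ dense by the same radius-halving inclusion; conversely, given opene $U,V$, the set $U$ contains a ball $B$ of radius $\epsilon$, and applying (iii) to $V$ makes $f_1^{-n}(V)$ eventually $\epsilon$ dense, so $U\cap f_1^{-n}(V)\supset B\cap f_1^{-n}(V)\neq\emptyset$ for all large $n$, giving co-finiteness of $N(U,V)$ and thus mixing. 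The only genuine obstacle is the reduction from ``$\epsilon$ dense,'' which a priori requires meeting the uncountably many balls $B(y,\epsilon)$, to the finitely many conditions coming from a fixed finite subcover; this is resolved cleanly by the radius-halving trick, since the $\epsilon/2$ balls of the subcover sit inside the $\epsilon$ balls we must hit, so no real difficulty remains.
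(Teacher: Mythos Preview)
Your proposal is correct and follows essentially the same approach as the paper: both arguments pass through a finite subcover of $X$ by $\epsilon/2$ balls, combine the resulting finitely many hitting conditions (you take a maximum of the $k_i$'s, the paper intersects co-finite sets), use the radius-halving inclusion $B(x_i,\epsilon/2)\subset B(y,\epsilon)$ to deduce $\epsilon$ density, and for the converse use that every opene set contains an $\epsilon$ ball. The only cosmetic difference is that you route everything through (i) while the paper routes through (ii), but since (i)$\Leftrightarrow$(ii) is immediate this is not a substantive distinction.
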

\begin{proof}
	(i)$\Leftrightarrow$(ii) follows from the definition of topologically mixing.\\
	For (iii)$\Rightarrow$(ii): Let $U,V\subset X$ be opene sets. We choose $\epsilon>0$ such that $U$ contains an $\epsilon$ ball. By condition (iii), there exists $n\in \mathbb{N}$ such that $f_1^{-n}(V)$ is $\epsilon$ dense in $X$ for all $n\geq N$. Then it follows that $U\cap f_1^{-n}(V)\neq\emptyset$ for all $n\geq N$. Hence, $N(U,V)$ is co-finite. (iv)$\Rightarrow$(ii) is similar as (iii)$\Rightarrow$(ii).\\
	Now to prove (ii)$\Rightarrow$(iv): Let $\{U_1,U_2,\dots, U_m\}$ be a finite subcover of $X$ by $\epsilon/2$ balls. Let $V$ be any opene set in $X$, then by condition (ii), $N(V,U_1),N(V,U_2),\dots N(V,U_m)$ are co-finite sets. Clearly, finite intersection of these co-finite sets is again co-finite. So, there exists $N\in \mathbb{N}$ such that for every $i\in \{1,2,\dots, m\}$, $f_1^n(V)\cap U_i \neq \emptyset$ for all $n\geq N$. Since $U_i's$ are $\epsilon/2$ balls covering $X$, $f_1^n(V)$ intersects any $\epsilon$ ball in $X$ for all $n\geq N$. Hence, $f_1^n(V)$ is $\epsilon$ dense in $X$ for all $n\geq N$.\\
	(ii)$\Rightarrow$(iii) follows similarly from the fact that $N(U_1,V),N(U_2,V),\dots, N(U_m,V)$ are co-finite sets.
\end{proof}

\begin{theorem}
	For an NDDS $(X,f_{1,\infty})$ the following are equivalent:
	\begin{enumerate}[label=\upshape(\roman*), leftmargin=*, widest=iii]
		\item  The system is locally eventually onto.
		\item  For all $\epsilon>0$, there exists $N\in\mathbb{N}$ such that $f_1^{-N}(x)$ is $\epsilon$ dense in $X$ for every $x\in X$.
		\item For all $\epsilon>0$, there exists $N\in\mathbb{N}$ such that $f_1^{-n}(x)$ is $\epsilon$ dense in $X$ for every $x\in X$ and every $n\geq N$.
	\end{enumerate}
\end{theorem}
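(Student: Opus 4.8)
The plan is to prove the cycle (ii)$\Rightarrow$(i), (iii)$\Rightarrow$(ii), and (i)$\Rightarrow$(iii), organising everything around the elementary duality in equation (\ref{2.1}): for a fixed $n$ and a fixed set $B$, one has $x\in f_1^n(B)$ if and only if $f_1^{-n}(x)\cap B\neq\emptyset$. Reading this across all $x\in X$ yields the dictionary I will use throughout: for a fixed $n$, the set $f_1^{-n}(x)$ is $\epsilon$ dense for \emph{every} $x\in X$ if and only if $f_1^n(B)=X$ for \emph{every} open ball $B$ of radius $\epsilon$. Thus (ii) and (iii) are precisely the ``single time'' and ``all large times'' versions of the assertion that the $f_1^n$-image of every $\epsilon$ ball is the whole space, while (i) is the same assertion demanded only of a single opene set rather than uniformly over $\epsilon$ balls.

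First I would dispose of the two soft implications. For (iii)$\Rightarrow$(ii) there is nothing to do beyond specialising the conclusion of (iii) to the value $n=N$. For (ii)$\Rightarrow$(i), given an opene $V$ I would pick $y\in V$ and $\epsilon>0$ with the ball $B(y,\epsilon)\subset V$, apply (ii) to this $\epsilon$ to obtain $N$, and invoke the dictionary: since $f_1^{-N}(x)$ is $\epsilon$ dense it meets the radius-$\epsilon$ ball $B(y,\epsilon)$, hence meets $V$, for every $x$; therefore $x\in f_1^N(V)$ for every $x$, i.e. $f_1^N(V)=X$, which is exactly LEO with $k=N$. Neither step requires any hypothesis beyond what is stated.

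The substance is in (i)$\Rightarrow$(iii). Here I would fix $\epsilon>0$, take a finite $(\epsilon/2)$-net $z_1,\dots,z_p$ of the compact space $X$, and record by the triangle inequality that \emph{every} ball $B(y,\epsilon)$ contains at least one of the balls $B(z_i,\epsilon/2)$; hence it suffices to make all $p$ of the balls $B(z_i,\epsilon/2)$ map \emph{onto} $X$ simultaneously. LEO supplies, for each $i$, a time $k_i$ with $f_1^{k_i}(B(z_i,\epsilon/2))=X$, and I would set $N=\max_i k_i$. The main obstacle is precisely the passage from these individual times to a single threshold valid for all $n\geq N$: for $n>k_i$ one has $f_1^n(B(z_i,\epsilon/2))=(f_n\circ\cdots\circ f_{k_i+1})(X)$, which equals $X$ only when the intervening maps are surjective. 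Unlike the autonomous case, LEO for a non-autonomous system does not by itself force $f_1^n(X)=X$ to \emph{persist} once it holds, since the full image can shrink again at a later time; so the quantifier ``for all $n\geq N$'' in (iii) cannot be extracted without controlling the tail maps. I would therefore close this implication by using surjectivity of the $f_n$ (the standing hypothesis under which LEO is well behaved, as in the remark preceding the theorem): with each $f_n$ surjective, $f_1^n(B(z_i,\epsilon/2))=X$ persists for all $n\geq k_i$, whence $f_1^n(B(y,\epsilon))=X$ for every $y$ and every $n\geq N$, and the dictionary converts this back into (iii). This persistence-under-further-iteration is the one place where genuine care is needed, and it is the step I expect to be the crux.
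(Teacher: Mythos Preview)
Your proof follows essentially the same route as the paper: the same cycle (i)$\Rightarrow$(iii)$\Rightarrow$(ii)$\Rightarrow$(i), the same finite cover by $\epsilon/2$ balls for (i)$\Rightarrow$(iii), and the same ``fit an $\epsilon$ ball inside $V$'' argument for (ii)$\Rightarrow$(i).

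The one noteworthy difference is that you are more careful than the paper about the persistence step in (i)$\Rightarrow$(iii). The paper simply writes ``By condition (i), for every $i$ there exists $N_i$ such that $f_1^n(V_i)=X$ for all $n\geq N_i$,'' which is stronger than what the stated definition of LEO (existence of a single time $k$ with $f_1^k(V)=X$) actually delivers. You correctly isolate this as the crux and observe that surjectivity of each $f_n$ is what makes $f_1^{k_i}(V_i)=X$ propagate to all $n\geq k_i$; this is the same extra hypothesis the paper invokes in the surrounding discussion (and in Theorem~\ref{4.21}) but does not list in the present theorem. So your analysis is sound, and in fact flags a point the paper's own proof passes over silently.
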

\begin{proof}
	To complete the proof we prove that (i)$\Rightarrow$(iii)$\Rightarrow$(ii)$\Rightarrow$(i).\\
	(i)$\Rightarrow$(iii): Consider a finite subcover $\{V_1,V_2,\dots,V_m\}$ of $X$ by $\epsilon/2$ balls using compactness of $X$. By condition (i), for every $i\in \{1,2,\dots,m\}$, there exists $N_i\in \mathbb{N}$ such that $f_1^n(V_i)=X$ for all $n\geq N_i$. Now take $N=max\{N_1,N_2,\dots,N_m\}$, then for all $i\in \{1,2,\dots,m\}$ and $n\geq N$, $f_1^n(V_i)=X$. This implies that for every $V_i$, $f_1^{-n}(x)\cap V_i\neq\emptyset$ for every $x\in X$ and every $n\geq N$. Since $V_i's$ are $\epsilon/2$ balls covering $X$, condition (iii) follows. (iii)$\Rightarrow$(ii) is trivial.\\
	To prove (ii)$\Rightarrow$(i): Let $V$ be any opene set in $X$. We choose $\epsilon>0$ such that $V$ contains an $\epsilon$ ball. Then by condition (ii), there exists $N\in \mathbb{N}$ such that $f_1^{-N}(x)$ is $\epsilon$ dense in $X$. Then $f_1^{-N}(x)\cap V\neq\emptyset$ for every $x\in X$, i.e., $\{x\}\cap f_1^N(V)\neq \emptyset$ for every $x\in X$. Thus $f_1^N(V)=X$.
\end{proof}
\subsection{Semiconjugacy}\label{conju}
Let $(X,f_{1,\infty})$ and $(Y,g_{1,\infty})$ be two non-autonomous discrete dynamical systems. If $\pi : X \rightarrow Y$ is a continuous surjection such that $\pi \circ f_1^n= g_1^n\circ \pi$ for each $n\in \mathbb{N}$, 
then $\pi$ is called a \emph{semiconjugacy}. Further if $\pi \circ f_n= g_n\circ \pi$ for each $n\in \mathbb{N}$, then $\pi$ is called a \emph{strong semiconjugacy}.

\begin{theorem}\label{20}
	Let $\pi : (X,f_{1,\infty})\rightarrow  (Y,g_{1,\infty})$ be a semiconjugacy. If $(X,f_{1,\infty})$ is topologically transitive, strongly transitive, very strongly transitive, exact transitive, strongly exact transitive, exact or locally eventually onto, then $(Y,g_{1,\infty})$ satisfies the corresponding property.
\end{theorem}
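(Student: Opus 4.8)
The plan is to prove all seven implications by a single uniform device: pull an opene set of $Y$ back to $X$ through $\pi^{-1}$, invoke the hypothesised property of $(X,f_{1,\infty})$ there, and then push the resulting set forward through $\pi$, using the intertwining relation to convert $f_1^n$ into $g_1^n$. Two structural facts about a semiconjugacy $\pi$ drive everything. First, since $\pi$ is a continuous surjection, for every opene $V\subset Y$ the set $\pi^{-1}(V)$ is opene in $X$ and satisfies $\pi(\pi^{-1}(V))=V$; moreover $\pi$ carries dense subsets of $X$ to dense subsets of $Y$, because $\overline{\pi(D)}\supset \pi(\overline D)=\pi(X)=Y$ whenever $D$ is dense. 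Second, from $\pi\circ f_1^n=g_1^n\circ\pi$ one gets, for every $A\subset X$ and every $n\in\mathbb N$, the identity $\pi(f_1^n(A))=g_1^n(\pi(A))$.

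For the ``onto/dense union'' properties I would argue as follows. Fix an opene $V\subset Y$ and set $U=\pi^{-1}(V)$, so $U$ is opene in $X$ and $\pi(U)=V$. Applying $\pi$ to $\bigcup_n f_1^n(U)$ and using the intertwining identity gives $\pi\big(\bigcup_n f_1^n(U)\big)=\bigcup_n g_1^n(V)$. Now specialize: if $X$ is topologically transitive then $\bigcup_n f_1^n(U)$ is dense, so its image $\bigcup_n g_1^n(V)$ is dense in $Y$; if $X$ is strongly transitive then $\bigcup_n f_1^n(U)=X$, so $\bigcup_n g_1^n(V)=\pi(X)=Y$; if $X$ is very strongly transitive, the same computation with the finite union $\bigcup_{n=1}^k f_1^n(U)=X$ yields $\bigcup_{n=1}^k g_1^n(V)=Y$; and if $X$ is locally eventually onto, then $f_1^k(U)=X$ for some $k$ forces $g_1^k(V)=\pi(f_1^k(U))=\pi(X)=Y$. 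Each of these is precisely the definition of the corresponding property for $(Y,g_{1,\infty})$.

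The exact-type properties are handled by the same pull-back/push-forward scheme applied to a \emph{pair} of sets, the only extra care being that images respect intersections merely up to inclusion. Given opene $U',V'\subset Y$, put $U=\pi^{-1}(U')$ and $V=\pi^{-1}(V')$. Since $\pi(A\cap B)\subset \pi(A)\cap\pi(B)$ together with $\pi(f_1^n(U))=g_1^n(U')$ and $\pi(f_1^n(V))=g_1^n(V')$, applying $\pi$ yields $\pi\big(f_1^n(U)\cap f_1^n(V)\big)\subset g_1^n(U')\cap g_1^n(V')$, hence $\pi\big(\bigcup_n (f_1^n(U)\cap f_1^n(V))\big)\subset \bigcup_n\big(g_1^n(U')\cap g_1^n(V')\big)$. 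If $X$ is exact, some $f_1^N(U)\cap f_1^N(V)$ is nonempty, so its nonempty image lands inside $g_1^N(U')\cap g_1^N(V')$; if $X$ is exact transitive (resp.\ strongly exact transitive) the left-hand union is dense (resp.\ equal to $X$), so its image is dense (resp.\ equals $Y$), and since this image is contained in $\bigcup_n(g_1^n(U')\cap g_1^n(V'))$, the larger set is likewise dense (resp.\ all of $Y$).

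The one genuine point requiring attention — the ``main obstacle,'' though it is mild — is exactly this inclusion $\pi(A\cap B)\subset\pi(A)\cap\pi(B)$: equality may fail, so the argument works only because in every exact-type conclusion we need an \emph{upward} implication (nonempty $\Rightarrow$ nonempty, dense $\Rightarrow$ dense, full space $\Rightarrow$ full space), never a downward one. I would therefore state the inclusion explicitly and stress that density and the full-space property are inherited by any superset, which is what makes the containment sufficient. Note that no surjectivity or openness of the individual maps $f_n$ is needed; only their continuity, the semiconjugacy relation, and the surjectivity of $\pi$ are used.
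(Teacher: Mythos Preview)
Your proposal is correct and follows essentially the same approach as the paper's proof: pull back opene sets via $\pi^{-1}$, apply the hypothesis on $X$, and push forward using $\pi\circ f_1^n=g_1^n\circ\pi$ together with the inclusion $\pi\big(f_1^n(\pi^{-1}(U))\cap f_1^n(\pi^{-1}(V))\big)\subset g_1^n(U)\cap g_1^n(V)$ for the exact-type cases. If anything, you are more explicit than the paper about why density is preserved under a continuous surjection and why the one-sided inclusion suffices for the exact, exact transitive, and strongly exact transitive conclusions.
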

\begin{proof}
	Suppose $(X,f_{1,\infty})$ is strongly transitive. Then for any opene subset $U$ in $Y$, $\pi^{-1}(U)$ is opene in $X$ and so $X=\bigcup\limits_{n=1}^{\infty}f_1^n(\pi^{-1}(U))$. It follows that $Y=\pi(X)=\bigcup\limits_{n=1}^{\infty}\pi f_1^n(\pi^{-1}(U))=\bigcup\limits_{n=1}^{\infty}g_1^n\pi (\pi^{-1}(U))=\bigcup\limits_{n=1}^{\infty}g_1^n(U)$. Thus $(Y,g_{1,\infty})$ is strongly transitive.\\
	If $(X,f_{1,\infty})$ is very strongly transitive then we use the same proof with $\bigcup\limits_{n=1}^{\infty}f_1^n(\pi^{-1}(U))$ is replaced by $\bigcup\limits_{n=1}^{N}f_1^n(\pi ^{-1}(U))$ for sufficiently large $N$ depending on $\pi^{-1}(U)$ and obtain the result with $\bigcup\limits_{n=1}^{\infty}g_1^n(U)$ replaced by $\bigcup\limits_{n=1}^{N}g_1^n(U)$. If $(X,f_{1,\infty})$ is locally eventually onto, we replace $\bigcup\limits_{n=1}^{\infty}f_1^n(\pi ^{-1}(U))$ by $f_1^N(\pi ^{-1}(U))$ and obtain the result with $\bigcup\limits_{n=1}^{\infty}g_1^n(U)$ replaced by $g_1^N(U)$. The remaining properties can be proved similarly by using the inclusion $\pi[f_1^n(\pi ^{-1}(U)) \cap f_1^n(\pi ^{-1}(V)) ]\subset g_1^n(U)\cap g_1^n(V)$.
\end{proof}
Proof of the following result is similar to the proof of Theorem \ref{20}.
\begin{theorem}\label{21}
	Let $\pi : (X,f_{1,\infty})\rightarrow  (Y,g_{1,\infty})$ be a strong semiconjugacy. If $(X,f_{1,\infty})$ is extended transitive or extended strongly transitive, then $(Y,g_{1,\infty})$ satisfies the corresponding property.
\end{theorem}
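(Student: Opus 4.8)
The plan is to mirror the argument of Theorem \ref{20}, replacing the iterates $f_1^n$ by the single maps $f_n$ and exploiting the fact that a strong semiconjugacy intertwines the individual maps: $\pi\circ f_n=g_n\circ\pi$ for every $n\in\mathbb{N}$. The first step is to upgrade this intertwining relation from single maps to the compositions $f_\alpha$ indexed by words $\alpha\in\sum$. Given $\alpha=(n_1,\dots,n_p)$, a straightforward induction on the word length $p$, using $\pi\circ f_{n_i}=g_{n_i}\circ\pi$ at each stage, yields the key identity $\pi\circ f_\alpha=g_\alpha\circ\pi$ for every $\alpha\in\sum$. This is the non-autonomous analogue of the observation (noted in the introduction for the autonomous case) that $\pi\circ f^n=g^n\circ\pi$, and it is exactly the reason the definition of strong semiconjugacy is phrased through the $f_n$ rather than through $f_1^n$.

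With that identity in hand, I would handle extended transitivity directly from its definition. Let $U\subset Y$ be opene; since $\pi$ is a continuous surjection, $\pi^{-1}(U)$ is opene in $X$. Extended transitivity of $(X,f_{1,\infty})$ gives that $\bigcup_{\alpha\in\sum}f_\alpha(\pi^{-1}(U))$ is dense in $X$. Applying the continuous surjection $\pi$ and using that $\pi$ of a dense set is dense in $\pi(X)=Y$, together with the intertwining identity $\pi\circ f_\alpha=g_\alpha\circ\pi$ and the surjectivity relation $\pi(\pi^{-1}(U))=U$, I obtain
\[
Y=\pi\!\left(\overline{\bigcup_{\alpha\in\sum}f_\alpha(\pi^{-1}(U))}\right)=\overline{\bigcup_{\alpha\in\sum}g_\alpha(U)},
\]
so $\bigcup_{\alpha\in\sum}g_\alpha(U)$ is dense in $Y$ and $(Y,g_{1,\infty})$ is extended transitive.

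For strong extended transitivity (called ``extended strongly transitive'' in the statement) the same template applies, with the density conclusion replaced by equality: from $\bigcup_{\alpha\in\sum}f_\alpha(\pi^{-1}(U))=X$ I push forward by $\pi$ to get $\bigcup_{\alpha\in\sum}g_\alpha(U)=\pi(X)=Y$, again via $\pi\circ f_\alpha=g_\alpha\circ\pi$ and $\pi(\pi^{-1}(U))=U$. The one subtlety worth flagging is that pushing a density statement forward needs that $\pi$ maps dense sets to dense sets, which holds because $\pi$ is a continuous surjection (indeed $\pi(\overline{A})\subset\overline{\pi(A)}$ and $\pi(\overline{A})\supset\pi(X)=Y$ when $\overline A=X$); the equality version is even cleaner and needs only surjectivity. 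I do not expect any genuine obstacle here: the entire content is the inductive identity $\pi\circ f_\alpha=g_\alpha\circ\pi$, and once that is established the two properties follow by the verbatim pushforward argument of Theorem \ref{20} with $f_1^n,g_1^n$ replaced by $f_\alpha,g_\alpha$.
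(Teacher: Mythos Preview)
Your proposal is correct and is precisely the argument the paper has in mind: the paper's own proof simply reads ``similar to the proof of Theorem \ref{20},'' and your write-up spells out that similarity by first establishing $\pi\circ f_\alpha=g_\alpha\circ\pi$ for all $\alpha\in\sum$ from the strong semiconjugacy relation and then pushing forward the (dense, respectively full) union exactly as in Theorem \ref{20}. The only cosmetic point is that the displayed equality $\pi\big(\overline{\,\cdot\,}\big)=\overline{\pi(\,\cdot\,)}$ is justified here because $X$ is compact and $\pi$ is continuous, which you may want to note explicitly.
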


Given two sequences $f_{1,\infty}=(f_n)_{n\in \mathbb{N}}$ and $g_{1,\infty}=(g_n)_{n\in \mathbb{N}}$ of self maps on $X$ and $Y$ respectively, we denote the sequence $(f_n \times g_n)_{n\in \mathbb{N}}$ of self maps on $X\times Y$ by $f_{1, \infty }\times g_{1,\infty}$.\\
Using the fact that every projection is a semiconjugacy, we can easily prove the following theorem.
\begin{theorem} 
	If $(X\times Y, f_{1,\infty}\times g_{1,\infty})$ is strongly transitive, very strongly transitive, exact transitive, strongly exact transitive, exact or locally eventually onto, then both $(X,f_{1,\infty})$ and $(Y,g_{1,\infty})$ satisfy the corresponding property.
\end{theorem}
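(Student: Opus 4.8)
The plan is to exploit the fact, asserted just before the statement, that the two projections $p_X\colon X\times Y\to X$ and $p_Y\colon X\times Y\to Y$ are semiconjugacies from $(X\times Y, f_{1,\infty}\times g_{1,\infty})$ onto $(X,f_{1,\infty})$ and $(Y,g_{1,\infty})$ respectively, and then simply invoke Theorem \ref{20}. So the first step is to verify the semiconjugacy claim: $p_X$ is a continuous surjection, and since $(f_1^n\times g_1^n)=(f_n\circ\cdots\circ f_1)\times(g_n\circ\cdots\circ g_1)$ acts coordinatewise, we have $p_X\circ(f_1^n\times g_1^n)=f_1^n\circ p_X$ for every $n\in\mathbb{N}$; the same holds for $p_Y$ with $g_1^n$. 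This identification is the conceptual heart of the argument, and it reduces everything to a property that has already been proved to pass through semiconjugacies.

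Once this is in place, the proof is immediate: if $(X\times Y, f_{1,\infty}\times g_{1,\infty})$ has any of the listed properties (strong transitivity, very strong transitivity, exact transitivity, strong exact transitivity, exactness, or locally eventually onto), then by Theorem \ref{20} applied to the semiconjugacy $p_X$, the factor $(X,f_{1,\infty})$ inherits the same property, and applying Theorem \ref{20} to $p_Y$ gives the conclusion for $(Y,g_{1,\infty})$. I would state this compactly rather than treating each of the six properties separately, since Theorem \ref{20} already handles them uniformly.

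The only genuine subtlety, and what I expect to be the main obstacle, is making sure the class of properties in the hypothesis really does match the class covered by Theorem \ref{20}. The six properties listed in the present statement are exactly those named in Theorem \ref{20} (note in particular that topological transitivity is excluded here, consistent with the present statement omitting it), so no property falls outside the scope of the earlier theorem; it is worth a sentence confirming this alignment so that the appeal to Theorem \ref{20} is airtight. I would also note that no surjectivity or openness hypotheses on the $f_n$ and $g_n$ are needed, since Theorem \ref{20} itself requires none beyond the definitions, and the projection maps are automatically continuous and surjective on a product of the underlying compact metric spaces.

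In summary, the proof reads: observe that $p_X$ and $p_Y$ are semiconjugacies, apply Theorem \ref{20} to each, and conclude that both factors satisfy whichever of the six properties $(X\times Y, f_{1,\infty}\times g_{1,\infty})$ satisfies. No calculation beyond the coordinatewise identity $p_X\circ(f_1^n\times g_1^n)=f_1^n\circ p_X$ is required.
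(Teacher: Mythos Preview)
Your proposal is correct and matches the paper's own argument exactly: the paper states just before the theorem that every projection is a semiconjugacy and then declares the result an easy consequence, which is precisely your appeal to Theorem \ref{20} via $p_X$ and $p_Y$. Your added verification of the identity $p_X\circ(f_1^n\times g_1^n)=f_1^n\circ p_X$ and your check that the six listed properties are all covered by Theorem \ref{20} simply make explicit what the paper leaves implicit.
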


\begin{theorem}
	Assume $(Y,g_{1,\infty})$ is topologically mixing. 
	If $(X,f_{1,\infty})$ is topologically transitive or topologically mixing, then $(X\times Y, f_{1,\infty}\times g_{1,\infty})$ satisfies the corresponding property.
\end{theorem}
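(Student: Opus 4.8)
The plan is to verify each of the two assertions—transitive times mixing is transitive, and mixing times mixing is mixing—directly from the hitting-time characterizations, after reducing to a base of product opene sets. First I would record that the $n$th iterate of $f_{1,\infty}\times g_{1,\infty}$ is $f_1^n\times g_1^n$, since $(f_n\times g_n)\circ\cdots\circ(f_1\times g_1)=(f_n\circ\cdots\circ f_1)\times(g_n\circ\cdots\circ g_1)$. Because the boxes $U_1\times U_2$ with $U_1\subset X$, $U_2\subset Y$ opene form a base for $X\times Y$, and since a nonempty (resp.\ co-finite) hitting-time set for a box pair forces the same for every larger opene pair via $f_1^n(U_1\times U_2)\cap(V_1\times V_2)\subset (f_1^n\times g_1^n)(A)\cap B$, it suffices to test the defining conditions on such product boxes.

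The key computation is the identity, for opene $U_1,V_1\subset X$ and $U_2,V_2\subset Y$,
\[(f_1^n\times g_1^n)(U_1\times U_2)\cap(V_1\times V_2)\neq\emptyset \iff f_1^n(U_1)\cap V_1\neq\emptyset \ \text{and}\ g_1^n(U_2)\cap V_2\neq\emptyset,\]
so that the hitting-time set of the product box equals $N(U_1,V_1)\cap N(U_2,V_2)$, where the two factors are the hitting-time sets in $X$ and in $Y$ respectively.

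For the mixing case this finishes the argument cleanly: if both $(X,f_{1,\infty})$ and $(Y,g_{1,\infty})$ are topologically mixing, then each of $N(U_1,V_1)$ and $N(U_2,V_2)$ is co-finite, and the intersection of two co-finite subsets of $\mathbb N$ is again co-finite. Hence the product box has co-finite hitting-time set, and the product system is topologically mixing.

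For the transitive case the same identity reduces the goal to $N(U_1,V_1)\cap N(U_2,V_2)\neq\emptyset$, where now $N(U_1,V_1)$ is only known to be nonempty (transitivity of $X$) while $N(U_2,V_2)$ is co-finite (mixing of $Y$). Writing the co-finite set so that it contains a tail $\{n:n\geq k_0\}$, the task is exactly to produce a hitting time of the $X$-box that is at least $k_0$. This is the main obstacle: a bare nonempty hitting set need not meet an arbitrary tail, and there is no semigroup structure on the $f_1^n$ to shift a small hitting time forward. I would therefore invoke the infiniteness of $N(U_1,V_1)$ guaranteed for a transitive system on a perfect space by Theorem \ref{8}(iv); an infinite subset of $\mathbb N$ is unbounded and so meets $\{n\geq k_0\}$. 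This produces a common hitting time, whence the product box, and thus every opene pair in $X\times Y$, has nonempty hitting-time set, proving transitivity of the product.
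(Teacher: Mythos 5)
Your proposal is correct and follows essentially the same route as the paper's proof: both reduce to pairs of product boxes, identify the product hitting-time set with $N(U_1,V_1)\cap N(U_2,V_2)$, and conclude by intersecting a co-finite set with an infinite (resp.\ co-finite) set. The only difference is that you make explicit the step the paper leaves implicit --- that transitivity of $(X,f_{1,\infty})$ yields \emph{infinite} hitting sets via Theorem \ref{8}(iv), which requires $X$ to be perfect, a hypothesis not stated in the theorem but tacitly needed by both arguments.
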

\begin{proof}
	Let $U_1, V_1\subset X$ and $U,V\subset Y$ be opene subsets. Since $(Y,g_{1,\infty})$ is topologically mixing, $N(U,V)$ is co-finite. Hence $N(U_1,V_1)\cap N(U,V)$ is infinite whenever $N(U_1,V_1)$ is infinite, therefore $(X\times Y, f_{1,\infty}\times g_{1,\infty})$ is topologically transitive whenever $(X,f_{1,\infty})$ is topologically transitive. Similarly, $N(U_1,V_1)\cap N(U,V)$ is co-finite whenever $N(U_1,V_1)$ is co-finite, therefore $(X\times Y, f_{1,\infty}\times g_{1,\infty})$ is topologically mixing whenever $(X,f_{1,\infty})$ is topologically mixing.	
\end{proof}
\begin{theorem}
	Assume $(Y,g_{1,\infty})$ is locally eventually onto. If $(X,f_{1,\infty})$ is strongly transitive, very strongly transitive, exact, fully exact, exact transitive, strongly exact transitive or locally eventually onto, then $(X\times Y, f_{1,\infty}\times g_{1,\infty})$ satisfies the corresponding property.	
\end{theorem}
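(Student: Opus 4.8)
The plan is to reduce all seven properties to the single structural fact that the $n$th iterate of the product system is the product of iterates, $f_1^n\times g_1^n$, so that on a box one has $(f_1^n\times g_1^n)(U\times V)=f_1^n(U)\times g_1^n(V)$. Each listed property is quantified over opene sets (or pairs of opene sets) of the phase space and is monotone in those sets, and the boxes $U\times V$ with $U,V$ opene form a basis of $X\times Y$. Hence it suffices to verify each property only for boxes: any opene $W\subseteq X\times Y$ contains some box $U\times V$, and enlarging the set (and therefore its iterated image) only makes each of these conditions easier. I would record this box reduction once and reuse it throughout.

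The engine is the following consequence of $(Y,g_{1,\infty})$ being locally eventually onto, used exactly as in the locally-eventually-onto equivalence theorem: for every opene $V\subseteq Y$ there is $k=k(V)$ with $g_1^n(V)=Y$ for all $n\geq k$. Thus once $n$ is large the second coordinate obstructs nothing: $g_1^n(V)=Y$, and $g_1^n(V_1)\cap g_1^n(V_2)=Y$ for $n\geq\max\{k(V_1),k(V_2)\}$, a set with nonempty interior. The point is that LEO hands us a \emph{co-finite} set of admissible times on the $Y$-factor, not merely a nonempty one.

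With these two ingredients each case assembles routinely. If $X$ is locally eventually onto, then $f_1^n(U)=X$ for $n\geq k'$, so for $n\geq\max\{k,k'\}$ we get $(f_1^n\times g_1^n)(U\times V)=X\times Y$ and the product is LEO. For strong and very strong transitivity I must show $\bigcup_{n}\bigl(f_1^n(U)\times g_1^n(V)\bigr)$, respectively a finite subunion, equals $X\times Y$; since $g_1^n(V)=Y$ for $n\geq k$, this collapses to covering $X$ by $\{f_1^n(U):n\geq k\}$, fed from the corresponding property of $X$. For exactness, full exactness, exact transitivity and strong exact transitivity the intersection $(f_1^n\times g_1^n)(U_1\times V_1)\cap(f_1^n\times g_1^n)(U_2\times V_2)$ has second factor $g_1^n(V_1)\cap g_1^n(V_2)=Y$ for $n\geq k$, so the product condition reduces to the corresponding condition on the first factor $f_1^n(U_1)\cap f_1^n(U_2)$ realized at a time $n\geq k$, which the hypothesis on $X$ supplies.

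The main obstacle, and the only substantive point, is the \emph{time-matching} hidden in every non-LEO case: the raw definition of the $X$-property guarantees a witnessing iterate at \emph{some} time, whereas the reduction needs a witness at a time $n\geq k(V)$. This is precisely why LEO of $Y$ (rather than a weaker mixing-type hypothesis) is the right assumption: it yields the co-finite admissible set $\{n\geq k\}$ on the second factor, which I would intersect with the set of times realizing the $X$-condition. That intersection is nonempty because the latter set is unbounded --- trivially when $X$ is locally eventually onto, and for the transitivity and exactness variants because the relevant hitting sets are forced to be infinite (in the spirit of condition (iv) of Theorem \ref{8}, equivalently via the density of the negative orbits of Theorems \ref{10} and \ref{11} upgraded to their tails). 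This mirrors how the preceding theorem exploited co-finiteness of $N(U,V)$ for a mixing factor, and once this intersection step is settled the box reduction closes every case.
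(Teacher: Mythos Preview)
Your overall strategy---reduce to basic boxes $U\times V$ and use that LEO on $Y$ forces $g_1^n(V)=Y$ for all $n\ge k(V)$, so that the product condition collapses to an $X$-condition at times $n\ge k$---is exactly the paper's argument. You are in fact more explicit than the paper: its key sentence (``If $L\subset\mathbb N$ so that $\bigcup\{f_1^n(U_1):n\in L\}=X$ and there exists $N\in L$ with $n\ge N$ then \ldots'') is garbled, and its application with $L=\mathbb N$ for strong transitivity silently presupposes the tail identity $\bigcup_{n\ge N}f_1^n(U_1)=X$, which is precisely the time-matching issue you flag.

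The gap is that your resolution of that issue does not go through in the non-autonomous setting. You claim the $X$-hitting sets are unbounded ``via the density of the negative orbits of Theorems~\ref{10} and~\ref{11} upgraded to their tails'', but this upgrade is exactly where the autonomous argument breaks. In an ADDS, $n\in N(U,x)$ together with $m\in N(U,u)$ for some preimage $u\in U$ of $x$ yields $n+m\in N(U,x)$ because $f^{n}\circ f^{m}=f^{n+m}$; in an NDDS one has $f_1^{n}\circ f_1^{m}\ne f_1^{n+m}$ and this bootstrapping fails. Theorem~\ref{8}(iv) does not help either: it gives $N(U,V)$ infinite for \emph{opene} $V$ under a perfectness hypothesis not assumed here, not $N(U,x)$ infinite for a point $x$, which is what strong transitivity of the product needs. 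The same obstruction reappears for very strong transitivity (you need a finite cover $\bigcup_{n=N}^{K}f_1^n(U_1)=X$, not merely $\bigcup_{n=1}^{K}$) and for the exactness variants (the density or covering by $f_1^n(U_1)\cap f_1^n(U_2)$ must be realized on the tail $n\ge k$). So your proposal and the paper's proof sit at the same level---same plan, same unclosed step---but you have at least located where the work is.
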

\begin{proof}
	Let $U_1 \subset X$ and $U\subset Y$ be opene subsets. Since $(Y,g_{1,\infty})$ is locally eventually onto, there exists $N\in \mathbb{N}$ such that $g_1^n(U)=Y$ for all $n\geq N$. If $L\subset \mathbb{N}$ so that $\bigcup \{f_1^n(U_1):n\in L\}=X$ and there exists $N\in L $ with $n\geq N$ then $\bigcup \{(f_1^n\times g_1^n)(U_1\times U):n\in L\}=X\times Y$. Using $L=\mathbb{N}$ we obtain the result for strong transitivity. Using $L=\{1,\dots N_1\}$ with $N_1$ sufficiently large, we obtain the result for very strong transitivity. Using $L=\{N_1\}$ with $N_1$ sufficiently large, we obtain the result for locally eventually onto. Remaining properties can be proved easily.
\end{proof}

In the next three theorems, we provide the sufficient conditions under which the topological transitivity and mixing are preserved by every finite rearrangement and the locally eventually onto property is preserved by every rearrangement of the given system.
 
\begin{proposition}\label{rearrangement1}
	Let $(X,f_{1,\infty})$ be an NDDS where $f_m\circ f_n=f_n\circ f_m$ for all $m, n\in \mathbb{N}$. If $(X,f_{1,\infty})$ is topologically transitive then $(X,g_{1,\infty})$ is toplogically trasnitive for every finite rearrangement $g_{1,\infty}$ of $f_{1,\infty}$.
\end{proposition}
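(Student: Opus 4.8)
The plan is to reduce everything to the simple but crucial observation that, because the maps commute, a finite rearrangement changes only finitely many of the iterates $f_1^n$. First I would fix notation: a finite rearrangement $g_{1,\infty}$ of $f_{1,\infty}$ corresponds to a bijection $\sigma\colon\mathbb{N}\to\mathbb{N}$ with $\sigma(n)=n$ for all $n$ outside a finite set, say $\sigma(n)=n$ for all $n>M$, and $g_n=f_{\sigma(n)}$ for every $n$. Since $\sigma$ fixes every index exceeding $M$, it restricts to a permutation of $\{1,2,\dots,M\}$; consequently, for each $n\geq M$ we have the set equality $\{\sigma(1),\sigma(2),\dots,\sigma(n)\}=\{1,2,\dots,n\}$.

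The key step is to show that $g_1^n=f_1^n$ for every $n\geq M$. By definition,
\[
g_1^n=g_n\circ g_{n-1}\circ\cdots\circ g_1=f_{\sigma(n)}\circ f_{\sigma(n-1)}\circ\cdots\circ f_{\sigma(1)}.
\]
This is a composition of the maps $f_1,\dots,f_n$ arranged in the order determined by $\sigma$. Because $f_m\circ f_k=f_k\circ f_m$ for all $m,k\in\mathbb{N}$, the maps $f_1,\dots,f_n$ commute pairwise, and so their composition is independent of the order in which they are taken. Hence $g_1^n=f_n\circ f_{n-1}\circ\cdots\circ f_1=f_1^n$ for all $n\geq M$. (For $n<M$ the iterates $g_1^n$ and $f_1^n$ may genuinely differ, but those indices will be irrelevant below.)

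To finish, I would invoke the equivalence of conditions in Theorem \ref{8}. Since $(X,f_{1,\infty})$ is topologically transitive and $X$ is perfect, condition (iv) of Theorem \ref{8} gives that for every opene pair $U,V\subset X$ the hitting-time set $N(U,V)=\{n\in\mathbb{N}:f_1^n(U)\cap V\neq\emptyset\}$ is infinite. In particular there exists some $n\geq M$ with $f_1^n(U)\cap V\neq\emptyset$. For this $n$ we have $g_1^n=f_1^n$, so $g_1^n(U)\cap V\neq\emptyset$, which shows that the hitting-time set for the rearranged system is nonempty for every opene pair $U,V$. By the definition of topological transitivity (condition (iii) of Theorem \ref{8} applied to $g_{1,\infty}$), the system $(X,g_{1,\infty})$ is topologically transitive.

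The only real obstacle is the reordering argument of the second paragraph, and the accompanying realization that one must work with \emph{large} iterates: since $g_1^n$ agrees with $f_1^n$ only for $n\geq M$, a bare nonemptiness statement about $N(U,V)$ would not suffice, and it is essential to use the stronger characterization that $N(U,V)$ is infinite (guaranteed by the perfectness of $X$ in Theorem \ref{8}) so that a suitable hitting time $n\geq M$ is always available.
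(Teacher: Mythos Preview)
Your proof is correct and follows essentially the same approach as the paper's: both show that commutativity forces $g_1^n=f_1^n$ for all $n$ beyond the rearrangement threshold, and then pick a hitting time past that threshold. You are in fact more careful than the paper, which simply asserts ``there exists $k\geq N$ such that $f_1^k(U)\cap V\neq\emptyset$'' without justification; you correctly identify that this step requires $N(U,V)$ to be infinite and invoke Theorem~\ref{8}(iv) (hence perfectness of $X$) to secure it.
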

\begin{proof}
Let $g_{1,\infty}$ is a finite rearrangement of $f_{1,\infty}$, i.e., there exists $N\in \mathbb{N}$ such that $\{f_1, f_2,\dots, f_N\}=\{g_1, g_2,\dots, g_N\}$ and $f_i=g_i$ for all $i\geq N+1$. Let $U, V\subset X$ be opene. Since  $f_m\circ f_n=f_n\circ f_m$ for all $m, n\in \mathbb{N}$, we can see that $f_1^k(U) = g_1^k(U)$ for all $k\geq N$. Since $(X,f_{1,\infty})$ is topologically transitive, there exists $k\geq N$ such that $f_1^k(U)\cap V\neq \emptyset$. Therefore $g_1^k(U)\cap V\neq \emptyset$. Hence, $(X,g_{1,\infty})$ is topologically transitive.
	\end{proof}

\begin{proposition}\label{rearrangement2}
	Let $(X,f_{1,\infty})$ be an NDDS where $f_m\circ f_n=f_n\circ f_m$ for all $m, n\in \mathbb{N}$. If $(X,f_{1,\infty})$ is topologically mixing then $(X,g_{1,\infty})$ is toplogically mixing for every finite rearrangement $g_{1,\infty}$ of $f_{1,\infty}$. 
\end{proposition}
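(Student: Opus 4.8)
The plan is to follow the strategy of Proposition~\ref{rearrangement1} almost verbatim, the only change being that the defining inequality of mixing requires us to control $g_1^n(U)$ for \emph{all} large $n$ rather than for a single $n$. First I would unwind the definition of a finite rearrangement: there exists $N\in\mathbb{N}$ with $\{g_1,\dots,g_N\}=\{f_1,\dots,f_N\}$ (as unordered collections) and $g_i=f_i$ for every $i\geq N+1$. The key structural observation is that, for each $k\geq N$, the list of factors appearing in $g_1^k=g_k\circ\cdots\circ g_1$ is a permutation of the list appearing in $f_1^k=f_k\circ\cdots\circ f_1$; since $f_m\circ f_n=f_n\circ f_m$ for all $m,n$, the composite is independent of the order of the factors, and hence $g_1^k=f_1^k$ as maps on $X$ for every $k\geq N$.

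With this identity in hand, I would fix an arbitrary opene pair $U,V\subset X$ and invoke the mixing of $(X,f_{1,\infty})$ to obtain $k_0\in\mathbb{N}$ with $f_1^n(U)\cap V\neq\emptyset$ for all $n\geq k_0$. Setting $M=\max\{N,k_0\}$, for every $n\geq M$ we simultaneously have $g_1^n(U)=f_1^n(U)$ (from the commutativity identity, since $n\geq N$) and $f_1^n(U)\cap V\neq\emptyset$ (from mixing, since $n\geq k_0$). Combining these gives $g_1^n(U)\cap V\neq\emptyset$ for all $n\geq M$, which is exactly the statement that $(X,g_{1,\infty})$ is topologically mixing.

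I expect no serious obstacle here: the entire argument rests on the single commutativity-based identity $g_1^k=f_1^k$ for $k\geq N$, and the uniform ``for all $n\geq k$'' clause in the definition of mixing passes through transparently because that identity holds uniformly for all $n\geq N$. The only point deserving care is to justify that reordering the first $N$ factors (which is what a finite rearrangement does) leaves the composite unchanged; this is where full pairwise commutativity of the family $(f_n)$ is genuinely used, and it is the same observation that drives the proof of Proposition~\ref{rearrangement1}.
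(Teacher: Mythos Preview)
Your proposal is correct and follows precisely the route the paper intends: its proof of Proposition~\ref{rearrangement2} consists solely of the sentence ``The proof is similar to that of Theorem~\ref{rearrangement1},'' and what you have written is exactly that adaptation, with the single change from ``there exists $k$'' to ``for all $n\geq M$'' handled via $M=\max\{N,k_0\}$. The key identity $g_1^k=f_1^k$ for $k\geq N$ from pairwise commutativity is the same observation driving both propositions, so nothing further is needed.
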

\begin{proof}
	The proof is similar to that of Theorem \ref{rearrangement1}.	
\end{proof}
\begin{proposition}\label{rearrangement3}
	Let $(X,f_{1,\infty})$ be an NDDS where each $f_n$ is surjective and $f_m\circ f_n=f_n\circ f_m$ for all $m, n\in \mathbb{N}$. If $(X,f_{1,\infty})$ is locally eventually onto then $(X,g_{1,\infty})$ is locally eventually onto for every rearrangement $g_{1,\infty}$ of $f_{1,\infty}$.
\end{proposition}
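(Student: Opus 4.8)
The plan is to exploit commutativity to reorder the factors in the composition $g_1^k$, and then to use surjectivity to absorb the extra maps that an infinite rearrangement may introduce. First I would fix notation: write the rearrangement as $g_n = f_{\sigma(n)}$ for a bijection $\sigma : \mathbb{N} \to \mathbb{N}$, so that $g_1^k = f_{\sigma(k)} \circ \cdots \circ f_{\sigma(1)}$ is the composition of the maps indexed by the $k$-element set $S_k := \{\sigma(1), \dots, \sigma(k)\}$, each factor occurring exactly once (the indices are distinct since $\sigma$ is injective). Because all the $f_n$ commute, this composition is independent of the order in which its factors are applied; this is the one routine point, verified by induction from pairwise commutativity.

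Next, fix an opene set $V \subset X$. By the locally eventually onto property of $(X, f_{1,\infty})$, there is an $m \in \mathbb{N}$ with $f_1^m(V) = X$, where $f_1^m = f_m \circ \cdots \circ f_1$. Since $\sigma$ is a bijection, each of $1, 2, \dots, m$ occurs as some value $\sigma(j)$, so taking $k = \max\{\sigma^{-1}(1), \dots, \sigma^{-1}(m)\}$ guarantees $\{1, \dots, m\} \subseteq S_k$. Setting $T := S_k \setminus \{1, \dots, m\}$ and invoking commutativity to move the factors indexed by $\{1,\dots,m\}$ to the inside, I would factor $g_1^k = \big(\bigcirc_{i \in T} f_i\big) \circ f_1^m$, where $\bigcirc_{i \in T} f_i$ denotes the composition of the maps $\{f_i : i \in T\}$ taken in any order.

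Finally I would evaluate $g_1^k(V) = \big(\bigcirc_{i \in T} f_i\big)\big(f_1^m(V)\big) = \big(\bigcirc_{i \in T} f_i\big)(X)$. As each $f_i$ is surjective, any composition of them is again surjective, so the right-hand side equals $X$. Hence $g_1^k(V) = X$, and since $V$ was an arbitrary opene set, $(X, g_{1,\infty})$ is locally eventually onto.

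Rather than a genuine obstacle, the point worth emphasizing is why surjectivity is needed here but not in Propositions \ref{rearrangement1} and \ref{rearrangement2}: an infinite rearrangement can force arbitrarily many extra factors $f_i$ with $i \in T$ beyond those required to cover $\{1,\dots,m\}$, and surjectivity is precisely what prevents these extra maps from shrinking the image below $X$. In a merely finite rearrangement the set $T$ stays controlled, which is why those earlier results dispense with the surjectivity hypothesis.
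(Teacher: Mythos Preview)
Your argument is correct and follows essentially the same route as the paper: pick $m$ with $f_1^m(V)=X$, choose $k$ large enough that the first $k$ terms of the rearranged sequence contain all of $f_1,\dots,f_m$, and then use commutativity to isolate the block $f_1^m$ inside $g_1^k$ while surjectivity of the remaining factors preserves the full image. Your version is simply more explicit (writing the rearrangement via a bijection $\sigma$ and spelling out the factorization), and your closing remark about why surjectivity is needed only for infinite rearrangements is a useful addition not present in the paper.
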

\begin{proof}
	Suppose $(X,f_{1,\infty})$ is locally eventually onto. Let $V\subset X$ be  opene. Then there exists $k\in \mathbb{N}$ such that $f_1^k(V)=X$. Since $g_{1,\infty}$ is a rearrangement of $f_{1,\infty}$ we can find a natural number $N$ such that $\{f_1, f_2, \dots, f_k\}\subset \{g_1, g_2, \dots, g_N\}$. Then $g_1^N(V)=X$, because each $f_n$ is surjective and $f_m\circ f_n=f_n\circ f_m$ for all $m, n \in \mathbb{N}$. Hence, $(X,g_{1,\infty})$ is locally eventually onto. 
\end{proof}

\section{Generic Dynamical Systems}
We start this section by defining generic dynamical system (GDS), some basic definitions in GDS, various notions of transitivity and at the end of this section we define conjugacy and strong conjugacy of GDS.\\
A \emph{generic dynamical system} is a pair $(X,\mathcal{F})$, where $X$ is a compact metric space and $\mathcal F$ is a collection of continuous self maps on $X$.

For $x\in X, A,B\subset X$ define
\begin{center}
	$  \mathcal{F}(x)=\{f(x):f \in \mathcal{F}\},$\\
	$\displaystyle\mathcal{F}(A)=\bigcup_{f \in \mathcal{F}}f(A)=\bigcup_{x\in A}\mathcal{F}(x),$\\
	$\displaystyle\mathcal{F}^{-1}(B)=\bigcup_{f \in \mathcal{F}} (f)^{-1}(B)$.\\
	
\end{center}

Since $f \in \mathcal{F}$ is continuous, $B$ open implies $\mathcal{F}^{-1}(B)$ is open. We write $\mathcal{F}^{-1}(x)$ for $\mathcal{F}^{-1}(\{x\})$.\\
Observe that equation (\ref{2.1}) implies
\begin{equation}
	\label{4.1}	\mathcal{F}(A)\cap B\neq \emptyset \Leftrightarrow A\cap \mathcal{F}^{-1}(B)\neq \emptyset.
\end{equation}
In particular,
\begin{equation}
	\label{4.2}	\mathcal{F}(x)\cap B\neq \emptyset \Leftrightarrow x\in \mathcal{F}^{-1}
	(B).
\end{equation}

The \emph{orbit} of an element $x\in X$ is defined as
\begin{center}
	$O_{\mathcal{F}}(x)=\{f_n\circ f_{n-1}\circ\cdots\circ f_1(x): f_1, f_2, \dots, f_n \in \mathcal{F}\}$
\end{center} and the \emph{negative orbit} is defined as
\begin{center}
	$O_{\mathcal{F}}^-(x)=\{y\in X: f_n\circ f_{n-1}\circ\cdots\circ f_1(y)=x$ for some $f_1, f_2, \dots, f_n \in \mathcal{F}\}$.
\end{center}

It can be easily seen that $\mathcal{F} (x)\subset O_\mathcal{F}(x)$ and $\mathcal{F}^{-1}(x)\subset O^-_\mathcal{F}(x)$ for every $x\in X$. If $\mathcal{F}$ is a semigroup, then $\mathcal{F} (x)= O_\mathcal{F}(x)$ and $\mathcal{F}^{-1}(x)= O^-_\mathcal{F}(x)$ for every $x\in X$.

A point $x\in X$ is called \emph{transitive point} if the orbit of $x$ is dense in $X$.  The set of all transitive points of $X$ is denoted by $Trans (\mathcal{F})$. The \emph{$\omega$-limit set} of a point $x\in X$, denoted by $\omega(x,\mathcal{F})$ is the set of all limit points of the orbit of $x$.

A subset $A\subset X$ is called \emph{ + invariant} if $\mathcal{F}(A)\subset A$. A subset $A\subset X$ is called \emph{$-$ invariant} if $\mathcal{F}^{-1}(A)\subset A$.

From equation (\ref{2.2}), it follows that $A$ is + invariant if and only if its complement $A^c$ is $-$ invariant. Also, continuity and compactness imply that if $A$ is + invariant, then its closure $\overline{A}$ is + invariant. Clearly, $A$ is + invariant if and only if $\mathcal{F}(x)\subset A$ for all $x\in A$ and is $-$ invariant if and only if $\mathcal{F}^{-1}(x)\subset A$ for all $x\in A$. If each $f$ is surjective, then from equation (\ref{2.3}), it follows that $A$ $ -$ invariant implies $A\subset f(A)$ for all $f \in \mathcal{F}$.

Call \emph{$A$ invariant} when $f(A)= A$ for all $f \in \mathcal{F}$. Call $A$ \emph{weakly  invariant} when $\mathcal{F}(A) $ is a dense subset of $A$. Thus, $A$ is closed and weakly invariant if and only if $ A=\overline{\mathcal{F}(A)}$. Again continuity and compactness imply that $A$ invariant (or weakly invariant) implies $\overline{A}$ is invariant (respectively weakly invariant).\\
It is easy to see that in a generic dynamical system every orbit is + invariant.

If $\mathcal F$ is a semigroup, it is easy to see that $\mathcal{F}(A)$ is + invariant and $\mathcal{F}^{-1}(A)$ is $-$ invariant. In particular, $\mathcal{F}(x)$ and its closure are + invariant. Also $\textit{Trans}(\mathcal{F})$ is $-$ invariant. 
\begin{definition} \normalfont
	
	Given a GDS $(X,\mathcal{F})$, a subset $A\subset X$ is said to be \emph{$\mathcal{F}-$transitive} if for all $U,V$ opene in $X$ there exist $f_1,f_2,\dots, f_n\in \mathcal{F}$ such that $f_n\circ f_{n-1}\circ \cdots f_1(U\cap A)\cap V \ne\emptyset$.
\end{definition}

\begin{proposition}
	Let $(X,\mathcal{F})$ be a GDS where $\mathcal{F}$ is a semigroup and $A\subset X$. Then the following are equivalent:
	\begin{enumerate}[label=\upshape(\roman*), leftmargin=*, widest=iii]
		\item $A$ is $\mathcal{F}-$ transitive.
		\item For all $U,V$ open in $X$ with $U\cap A\ne\emptyset $ and $V\cap A\ne\emptyset $ there exists $f\in \mathcal{F}$ such that $f(A\cap U) \cap V\neq \emptyset$.
		\item $A\cap U \cap \mathcal{F}^{-1}(V)\neq \emptyset$ for all $U,V$ open in $X$ with $U\cap A\ne\emptyset $ and $V\cap A\ne\emptyset $. 
		\item $\mathcal{F}(A\cap U) \cap V\neq \emptyset$ for all $U,V$ open in $X$ with $U\cap A\ne\emptyset $ and $V\cap A\ne\emptyset $. 
		\item $A\cap \mathcal{F}^{-1}(V)$ is dense in $A$ for all $V$ open in $X$ with $V\cap A \neq\emptyset$.
	\end{enumerate}
\end{proposition}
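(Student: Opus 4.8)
The plan is to verify the chain of equivalences (i)$\Leftrightarrow$(ii)$\Leftrightarrow$(iv)$\Leftrightarrow$(iii)$\Leftrightarrow$(v), where almost every step is a direct unwinding of a definition; the only two ingredients carrying genuine weight are the semigroup hypothesis on $\mathcal{F}$ and the basic adjunction recorded in equation (\ref{4.1}). Throughout, I would keep careful track of the standing requirements $U\cap A\neq\emptyset$ and $V\cap A\neq\emptyset$ appearing in (ii)--(v), since matching these quantifier conditions against the definition of $\mathcal{F}$-transitivity is where the bookkeeping lives.

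For (i)$\Leftrightarrow$(ii) I would invoke that $\mathcal{F}$ is a semigroup: any finite composition $f_n\circ f_{n-1}\circ\cdots\circ f_1$ is again a member of $\mathcal{F}$, and conversely a single $f\in\mathcal{F}$ is a length-one composition. Hence the existence of a finite string of maps carrying $U\cap A$ into $V$ is the same as the existence of one $f\in\mathcal{F}$ with $f(A\cap U)\cap V\neq\emptyset$, which is exactly (ii) once $U,V$ are taken to meet $A$. For (ii)$\Leftrightarrow$(iv) I would simply expand $\mathcal{F}(A\cap U)=\bigcup_{f\in\mathcal{F}}f(A\cap U)$: this union meets $V$ precisely when some individual image $f(A\cap U)$ does, so (iv) and (ii) assert the same thing.

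For (iv)$\Leftrightarrow$(iii) I would apply the adjunction (\ref{4.1}) with the two sets taken to be $A\cap U$ and $V$, giving $\mathcal{F}(A\cap U)\cap V\neq\emptyset \Leftrightarrow (A\cap U)\cap\mathcal{F}^{-1}(V)\neq\emptyset$, whose right-hand side is the assertion of (iii). Finally, (iii)$\Leftrightarrow$(v) is the translation of ``dense in $A$'' into the subspace topology: a subset $S\subseteq A$ is dense in $A$ exactly when it meets every nonempty relatively open set $A\cap U$, and taking $S=A\cap\mathcal{F}^{-1}(V)$ turns this into $A\cap U\cap\mathcal{F}^{-1}(V)\neq\emptyset$ for every opene $U$ with $U\cap A\neq\emptyset$, i.e.\ condition (iii).

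The step I expect to demand the most care is (i)$\Leftrightarrow$(ii): not because of any deep content, but because the quantifier in the definition of $\mathcal{F}$-transitivity ranges over opene $U,V$, whereas (ii)--(v) explicitly restrict to $U,V$ meeting $A$. I would make sure the constraints $U\cap A\neq\emptyset$ and $V\cap A\neq\emptyset$ are consistently inherited in both directions and that the semigroup collapse of compositions is applied the right way. Once equation (\ref{4.1}) and the union definition of $\mathcal{F}(\cdot)$ are in hand, the remaining implications are purely formal.
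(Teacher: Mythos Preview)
Your proposal is correct and matches the paper's own proof, which simply asserts that (ii)--(v) are easily seen to be equivalent, that (ii)$\Rightarrow$(i) is trivial, and that (i)$\Rightarrow$(ii) uses the semigroup hypothesis. Your write-up is more explicit about each step---in particular your use of (\ref{4.1}) for (iii)$\Leftrightarrow$(iv) and the subspace-density reformulation for (iii)$\Leftrightarrow$(v)---but the overall architecture and the identification of the semigroup collapse as the one nontrivial ingredient are identical to the paper's argument; your flagging of the quantifier bookkeeping between the definition of $\mathcal{F}$-transitivity and the restricted conditions in (ii)--(v) is a welcome clarification that the paper leaves implicit.
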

\begin{proof}
	It is easy to see that (ii)$-$(v) are equivalent to each other. (ii)$\Rightarrow$(i) is also trivial. (i)$\Rightarrow$(ii) holds when $\mathcal{F}$ is a semigroup.
\end{proof}	
\begin{theorem}
	For $A\subset X$, the following are equivalent:
	\begin{enumerate}[label=\upshape(\roman*), leftmargin=*, widest=iii]
		\item For all opene $U,V$ in $X$ with $U\cap A\ne\emptyset $ and $V\cap A\ne\emptyset $ there exists $f\in \mathcal{F}$ such that $f(A\cap U) \cap V\neq \emptyset$.\label{1}
		\item For all opene $U,V$ in $X$ with $U\cap \overline{A}\ne\emptyset $ and $V\cap \overline{A}\ne\emptyset $ there exists $f\in \mathcal{F}$ such that $f(\overline A\cap U) \cap V\neq \emptyset$.
		\item $\{x\in \overline{A}: {A}\subset \overline{\mathcal{F}(x)}\}$ is a dense subset of $\overline{A}$.
	\end{enumerate}
	Moreover, if $A$ is a $G_\delta$ subset of $X $ satisfying condition \ref{1}, then $\{x\in {A}: {A}\subset \overline{\mathcal{F}(x)}\}$ is a dense, $G_\delta $ subset of $A$.
\end{theorem}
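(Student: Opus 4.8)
The plan is to route the argument through condition (ii), which is nothing but condition (i) applied to the closed set $\overline A$, and then to characterise both via the point set in (iii) by means of the Baire category theorem.

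First I would prove (i)$\Leftrightarrow$(ii). The elementary fact behind this equivalence is that for an open set $U$ one has $U\cap A\neq\emptyset$ if and only if $U\cap\overline A\neq\emptyset$. Granting this, (i)$\Rightarrow$(ii) is immediate from the inclusion $A\cap U\subset\overline A\cap U$: any witness $f$ for $A$ is automatically a witness for $\overline A$. For (ii)$\Rightarrow$(i) the task is to push the witness back into $A$. If $f(\overline A\cap U)\cap V\neq\emptyset$, choose $y\in\overline A\cap U$ with $f(y)\in V$; then $U\cap f^{-1}(V)$ is an open neighbourhood of $y\in\overline A$, hence meets $A$, and any $a\in A\cap U\cap f^{-1}(V)$ witnesses (i). This continuity step is the only delicate point in this equivalence.

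Next I would translate (ii) into the language of preimages. By equation (\ref{4.1}), the existence of $f\in\mathcal F$ with $f(\overline A\cap U)\cap V\neq\emptyset$ is equivalent to $\overline A\cap U\cap\mathcal F^{-1}(V)\neq\emptyset$, so (ii) asserts precisely that $\overline A\cap\mathcal F^{-1}(V)$ is dense in $\overline A$ for every opene $V$ meeting $\overline A$. Since $X$ is second countable, it suffices to let $V$ range over the members of a fixed countable base $\mathcal B$ meeting $A$ (recall $V\cap A\neq\emptyset\Leftrightarrow V\cap\overline A\neq\emptyset$ for open $V$). Using equation (\ref{4.2}) and the neighbourhood-base description of the closure, one checks the identity
\[
\{x\in\overline A: A\subset\overline{\mathcal F(x)}\}=\overline A\cap\bigcap_{V\in\mathcal B,\,V\cap A\neq\emptyset}\mathcal F^{-1}(V),
\]
where each $\mathcal F^{-1}(V)$ is open since the maps in $\mathcal F$ are continuous. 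Then (ii)$\Rightarrow$(iii) follows from the Baire category theorem on the compact, hence complete, metric space $\overline A$: the right-hand side is a countable intersection of dense open subsets of $\overline A$, so it is dense (and $G_\delta$) in $\overline A$. Conversely, (iii)$\Rightarrow$(ii) is a straightforward density argument: given opene $U,V$ meeting $\overline A$, density of the set in (iii) produces $x\in\overline A\cap U$ with $A\subset\overline{\mathcal F(x)}$, and since $V$ meets $A$ and is open there is $f\in\mathcal F$ with $f(x)\in V$, witnessing (ii).

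Finally, for the moreover statement I would run the same Baire argument on $A$ itself instead of $\overline A$. Condition (i) says exactly that $A\cap\mathcal F^{-1}(V)$ is dense in $A$ for every $V\in\mathcal B$ meeting $A$, and each such set is relatively open in $A$. The additional ingredient is that a $G_\delta$ subset of the completely metrizable space $X$ is itself completely metrizable, hence a Baire space; applying Baire to $A$ then shows that $\{x\in A: A\subset\overline{\mathcal F(x)}\}=\bigcap_{V\in\mathcal B,\,V\cap A\neq\emptyset}(A\cap\mathcal F^{-1}(V))$ is a dense $G_\delta$ subset of $A$. I expect the main obstacle to be bookkeeping the two closure-versus-$A$ subtleties: the continuity step in (ii)$\Rightarrow$(i), and the passage from $\overline A$ to the $G_\delta$ set $A$, where one must invoke complete metrizability of $A$ (and not merely of $\overline A$) in order to apply Baire.
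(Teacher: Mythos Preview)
Your proof is correct and follows essentially the same route as the paper: the key identity $\{x:\ A\subset\overline{\mathcal F(x)}\}=\bigcap\{\mathcal F^{-1}(V):\ V\ \text{opene},\ V\cap A\neq\emptyset\}$ combined with the Baire category theorem, and the observation that an open set meets $A$ iff it meets $\overline A$ (which the paper applies directly to the open set $U\cap\mathcal F^{-1}(V)$, while you unpack it as a continuity step). The only minor difference is in the ``moreover'' clause: you re-run Baire on $A$ via its complete metrizability as a $G_\delta$, whereas the paper simply intersects $A$ (a dense $G_\delta$ in $\overline A$) with the dense $G_\delta$ subset of $\overline A$ already obtained in proving (iii).
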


\begin{proof}
	(i)$\Leftrightarrow$(ii): The open sets, $U, V$ and $U\cap \mathcal{F}^{-1}(V)$ meet the closure of $\overline{A }$ if and only if they meet $A$.\\
	(iii)$\Rightarrow$(ii): Let $B=\{x\in {A}: {A}\subset \overline{\mathcal{F}(x)}\}$. For $x\in A$, $A\subset \overline{\mathcal{F}(x)}$ if and only if $\mathcal{F}(x)\cap V\neq \emptyset $ for every open $V$ such that $A\cap V\neq \emptyset$.\\
	By equation (\ref{4.2}) this occurs if and only if $x\in \mathcal{F}^{-1}(V)$ for all open $V$ such that $A\cap V\neq \emptyset$. That is,
	\begin{equation}
		\label{4.3} B= \bigcap \{\mathcal{F}^{-1}(V): \textit{ $V$ is open and $V\cap A\neq \emptyset$}\}.
	\end{equation}
	Notice that we obtain the same intersection if we restrict to those $V$'s is a countable basis and so $B$ is a $G_\delta $ subset of A.\\
	Thus, if $\{x\in {A}: {A}\subset \overline{\mathcal{F}(x)}\}$ is dense in $A$, then $A\cap \mathcal{F}^{-1}(V)$ is dense in $A$ for all open $V$ such that $A\cap V\neq \emptyset$ and so there exists $f\in \mathcal{F}$ such that $f(A\cap U) \cap V\neq \emptyset$. Applied to the closure of $A$ this yields (iii)$\Rightarrow$(ii).\\
	(ii)$\Rightarrow$(iii): Now suppose that for all opene $U,V$in $X$ there exists $f\in \mathcal{F}$ such that $f(A\cap U) \cap V\neq \emptyset$, and $A$ is a $G_\delta$ subset of $X$ and so is a dense, $G_\delta$ subset of $\overline{A}$. For each $V$ open and such that $A\cap V\neq \emptyset,\ \mathcal{F}^{-1}(V)$ is open and has a dense intersection with $\overline{A}$. Letting $V$ vary over members of a countable base, (\ref{4.3}) implies that $\{x\in \overline{A}: {A}\subset \overline{\mathcal{F}(x)}\}=\{x\in \overline{A}: \overline{A}\subset \overline{\mathcal{F}(x)}\}$ is a dense $G_\delta$ subset of $\overline{A}$ by the Baire Category Theorem (proving (iii)).
	
	Finally if $A$ is $G_\delta$ in $X$ satisfying condition (i), then  
	\begin{center}
		
		$\{x\in {A}: {A}\subset \overline{\mathcal{F}(x)}\}=A\cap \{x\in \overline{A}: \overline{A}\subset \overline{\mathcal{F}(x)}\}$
	\end{center}  is a dense, $G_\delta$ subset of $\overline{A}$ and hence of $A$.
\end{proof}

\begin{definition}[Topologically transitive] \normalfont
	A generic dynamical system $(X,\mathcal{F})$ is called \emph{topologically transitive} if for all opene pair $U,V\subset X$, there exist $f_1, f_2, \dots, f_n \in \mathcal{F}$ such that $f_n\circ f_{n-1}\circ \cdots \circ f_1(U)\cap V\neq \emptyset$.
	
\end{definition}
\begin{proposition}
	Let $(X,\mathcal{F})$ be a GDS where $\mathcal{F}$ is a semigroup and $A\subset X$. Then the following are equivalent:	
	\begin{enumerate}[label=\upshape(\roman*), leftmargin=*, widest=iii]
		\item The system $(X,\mathcal{F})$ is topologically transitive.
		\item $X$ is an $\mathcal{F}-$transitive set.
		\item $\mathcal{F}(U)$ is dense in $X$ for all opene $U$ in $X$.
		\item $\mathcal{F}^{-1}(V)$ is dense in $X$ for all opene $V$ in $X$.
	\end{enumerate}
	\begin{proof}
		It is easy to see that (ii), (iii) and (iv) are equivalent to each other and (ii)$\Rightarrow$(i) is also trivial. (i)$\Rightarrow$(ii) is true if $\mathcal{F}$ is a semigroup.
	\end{proof}
\end{proposition}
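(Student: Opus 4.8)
The plan is to verify the chain of equivalences by first observing that (i) and (ii) say literally the same thing, and then reducing (ii), (iii), (iv) to one another through the semigroup hypothesis and equation (\ref{4.1}). So the real work is concentrated in the two equivalences (ii)$\Leftrightarrow$(iii) and (iii)$\Leftrightarrow$(iv).

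First I would dispose of (i)$\Leftrightarrow$(ii). Taking $A=X$ in the definition of an $\mathcal{F}$-transitive set gives $U\cap A=U$, so the assertion ``$X$ is $\mathcal{F}$-transitive'' reads: for every opene pair $U,V$ there exist $f_1,\dots,f_n\in\mathcal{F}$ with $(f_n\circ\cdots\circ f_1)(U)\cap V\neq\emptyset$. This is word-for-word the definition of topological transitivity of $(X,\mathcal{F})$, so both implications are immediate from the definitions; no structural hypothesis on $\mathcal{F}$ is actually required for this particular equivalence.

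Next comes the heart of the argument, (ii)$\Leftrightarrow$(iii), which is where the semigroup property is used. Since $\mathcal{F}$ is a semigroup, any finite composition $g=f_n\circ\cdots\circ f_1$ with $f_i\in\mathcal{F}$ again lies in $\mathcal{F}$, and conversely each single $g\in\mathcal{F}$ is trivially such a composition. Hence ``$X$ is $\mathcal{F}$-transitive'' is equivalent to: for every opene pair $U,V$ there is a \emph{single} $g\in\mathcal{F}$ with $g(U)\cap V\neq\emptyset$. Writing $\mathcal{F}(U)=\bigcup_{g\in\mathcal{F}}g(U)$, the existence of such a $g$ is exactly the statement $\mathcal{F}(U)\cap V\neq\emptyset$; letting $V$ range over all opene sets, this says precisely that $\mathcal{F}(U)$ is dense in $X$. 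Quantifying over every opene $U$ then yields (ii)$\Leftrightarrow$(iii).

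Finally I would obtain (iii)$\Leftrightarrow$(iv) from equation (\ref{4.1}): for opene $U,V$ we have $\mathcal{F}(U)\cap V\neq\emptyset$ if and only if $U\cap\mathcal{F}^{-1}(V)\neq\emptyset$. Thus ``$\mathcal{F}(U)$ is dense for every opene $U$'', read as $\mathcal{F}(U)\cap V\neq\emptyset$ for all opene $U,V$, is equivalent to ``$U\cap\mathcal{F}^{-1}(V)\neq\emptyset$ for all opene $U,V$'', which is exactly the density of $\mathcal{F}^{-1}(V)$ for every opene $V$. I do not anticipate a genuine obstacle: the only real content is the semigroup collapse of finite compositions into single maps in (ii)$\Leftrightarrow$(iii), after which everything is bookkeeping with the definition of density and with equation (\ref{4.1}). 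The one point to state carefully is that ``$\mathcal{F}(U)$ is dense'' must be unpacked with the ``for all opene $V$'' quantifier, so that its quantifier structure matches that of $\mathcal{F}$-transitivity and of (\ref{4.1}).
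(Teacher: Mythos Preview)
Your proof is correct and in fact locates the use of the semigroup hypothesis more precisely than the paper's sketch does. The paper asserts that (ii), (iii), (iv) are ``easy to see'' equivalent and that the semigroup is needed for (i)$\Rightarrow$(ii); you instead note that (i) and (ii) are literally the same statement once $A=X$ (both phrased with finite compositions $f_n\circ\cdots\circ f_1$), and that the semigroup hypothesis is what collapses these compositions to single maps in passing from (ii) to (iii). Your placement of the hypothesis is the accurate one, since (iii) and (iv) speak of $\mathcal{F}(U)=\bigcup_{f\in\mathcal{F}}f(U)$ and $\mathcal{F}^{-1}(V)$, which involve only single $f$'s; the paper's phrasing appears to be a carryover from the preceding proposition on $\mathcal{F}$-transitive subsets, where item (ii) already referred to a single $f\in\mathcal{F}$. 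The appeal to equation~(\ref{4.1}) for (iii)$\Leftrightarrow$(iv) is exactly right.
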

\begin{theorem}
	In the semigroup case if every opene $-$ invariant set is dense, then the system is transitive.
	
\end{theorem}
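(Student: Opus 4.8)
The plan is to reduce transitivity to condition (iv) of the Proposition immediately preceding the theorem, namely that $\mathcal{F}^{-1}(V)$ is dense in $X$ for every opene $V$, and then to extract that density directly from the hypothesis by observing that $\mathcal{F}^{-1}(V)$ is itself an opene $-$ invariant set. So the whole argument hinges on verifying that, for each opene $V$, the set $\mathcal{F}^{-1}(V)$ is open, nonempty, and $-$ invariant.

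First I would fix an opene $V\subset X$ and set $W=\mathcal{F}^{-1}(V)=\bigcup_{f\in\mathcal{F}}f^{-1}(V)$. Since each $f\in\mathcal{F}$ is continuous, $W$ is a union of open sets and hence open. The next step uses that $\mathcal{F}$ is a semigroup to show $W$ is $-$ invariant: I would compute
\[
\mathcal{F}^{-1}(W)=\bigcup_{g\in\mathcal{F}}g^{-1}\left(\bigcup_{f\in\mathcal{F}}f^{-1}(V)\right)=\bigcup_{f,g\in\mathcal{F}}(f\circ g)^{-1}(V),
\]
and since $f\circ g\in\mathcal{F}$ for all $f,g\in\mathcal{F}$, every term on the right lies in $\mathcal{F}^{-1}(V)=W$, so $\mathcal{F}^{-1}(W)\subset W$. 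Thus, whenever $W$ is nonempty it is an opene $-$ invariant set, hence dense by hypothesis; condition (iv) of the Proposition then yields topological transitivity.

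The step that needs genuine care — and which I expect to be the main obstacle — is ruling out $W=\mathcal{F}^{-1}(V)=\emptyset$, since the empty set is not dense and would break the appeal to condition (iv). Here the key point is that $W=\emptyset$ forces \emph{every} opene subset $B\subset V$ to be $-$ invariant, because $\mathcal{F}^{-1}(B)\subset\mathcal{F}^{-1}(V)=\emptyset\subset B$. I would first note that $V\neq X$, since $\mathcal{F}^{-1}(X)=X\neq\emptyset$ whenever $\mathcal{F}$ is nonempty, so $V^c\neq\emptyset$; I would then dispose of the trivial case in which $X$ is a single point, and otherwise pick $p\in V$ and $q\in V^c$ and, using that $V$ is open, choose $r>0$ small enough that $B(p,r)\subset V$ and $q\notin\overline{B(p,r)}$. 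This $B(p,r)$ is then an opene $-$ invariant set that fails to be dense, contradicting the hypothesis; hence $W$ cannot be empty.

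Combining these observations, $\mathcal{F}^{-1}(V)$ is a nonempty opene $-$ invariant set for every opene $V$, so it is dense by hypothesis, and the Proposition gives topological transitivity. The only mildly delicate points are the semigroup computation establishing $-$ invariance and the elementary metric-space argument securing nonemptiness; everything else is a direct invocation of the equivalence already proved.
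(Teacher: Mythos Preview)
Your proposal is correct and follows essentially the same overall strategy as the paper: reduce transitivity to density of $\mathcal{F}^{-1}(V)$ for every opene $V$, observe that this set is open and $-$ invariant in the semigroup case, and then argue by contradiction that it cannot be empty.

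The only noteworthy difference is in the contradiction establishing $\mathcal{F}^{-1}(V)\neq\emptyset$. You argue that $V\neq X$ (since $\mathcal{F}^{-1}(X)=X$), pick $q\in V^c$, and produce a small ball in $V$ whose closure misses $q$; this ball is opene, $-$ invariant, but not dense. The paper instead first notes that $V$ itself is $-$ invariant and hence dense, then in the non-singleton case finds two disjoint opene subsets $U_x,U_y\subset V$; each is $-$ invariant and therefore dense, contradicting their disjointness. Your argument exploits the metric directly to exhibit a single non-dense witness, while the paper's is purely order-topological and produces two mutually incompatible dense sets. Both are short and elementary; the paper's version would transfer verbatim to a Hausdorff (non-metric) setting, whereas yours is marginally more concrete in the metric context assumed here.
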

\begin{proof}
	If $V$ is opene, then $\mathcal{F}^{-1}(V)$ is $-$ invariant and so is dense if it is nonempty. Once we show that $\mathcal{F}^{-1}(V)$ is nonempty for every opene $V$, we have $\mathcal{F}^{-1}(V)$ is dense for all such $V$ and this is transitivity.
	
	Assume instead that $\mathcal{F}^{-1}(V)=\emptyset $ for some opene $V$. We obtain a contradiction. Let $x\in V$. Observe that $V$ is opene and $-$ invariant and so is dense.
	
	Case 1. $V=\{x\}$. Since $V=\{x\}$ is dense and closed, it follows that $X=\{x\}$. Then for all $f\in \mathcal F, \ f(x)=x$ and so $\{x\}=\mathcal{F}^{-1}(x)=\mathcal{F}^{-1}(V)$ which is nonempty.\\
	Case 2. There exists $y\neq x$ with $y\in V$. Choose open disjoint subsets $U_x$ and $U_y$ of $V$ with $x\in U_x$  and $y\in U_y$. Hence, $\mathcal{F}^{-1}(U_x)\cup \mathcal{F}^{-1}(U_y)\subset\mathcal{F}^{-1}(V)=\emptyset$. Hence, $U_x$ and $U_y$ are each $-$ invariant and so are open and dense. But then $U_x\cap U_y$ is open and dense which is impossible since $U_x\cap U_y=\emptyset$.
\end{proof}	

In sections 3.1 and 3.2, we gave some equivalent conditions for extended transitivity and strong extended transitivity in NDDS. The following theorem gives such results for topological transitivity in GDS.
\begin{theorem}
	Let $(X,\mathcal{F})$ be a generic dynamical system, where $X$ is a compact perfect metric space and each $f_n$ is surjective. Then the following are equivalent:
	\begin{enumerate}[label=\upshape(\roman*), leftmargin=*, widest=iii]
		\item The system $(X,\mathcal{F})$ is topologically transitive.
		\item For every pair of opene sets $U,V\subset X$, there exist
		$f_1, f_2,\dots, f_n\in \mathcal{F}$ such that $f_1^{-1}\circ\cdots \circ f_n^{-1}(U)\cap V\neq\emptyset$.
		\item For every opene set $U \subset X$, the set $\bigcup \limits_{\substack{n\in \mathbb{N}\\f_1, f_2, \dots, f_n \in \mathcal{F}}}f_n\circ f_{n-1}\circ \cdots \circ f_1(U)$ is dense in $X$.
		\item For every opene set $U \subset X$, the set $\bigcup \limits_{\substack{n\in \mathbb{N}\\f_1, f_2, \dots, f_n \in \mathcal{F}}}f_1^{-1}\circ f_{2}^{-1}\circ \cdots \circ f_n^{-1}(U)$ is dense in $X$.
		\item If $A\subset X$ is closed and + invariant, then either $A=X$ or $A$ is nowhere dense in $X$.
		\item If $U\subset X$ is open and $U\subset f(U)$ for all $f \in \mathcal{F}$, then either $U=\emptyset$ or $U$ is dense in $X$.
		\item There exists $x\in X$ such that the orbit $O_{\mathcal{F}}(x)$ is dense in $X$.
		\item The set $Trans(\mathcal{F})$ is transitive points equals $\{x:\omega(x,\mathcal{F})=X\}$ and it is a dense $G_{\delta}$ subset of $X$.
	\end{enumerate} 
\end{theorem}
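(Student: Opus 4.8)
The plan is to establish the equivalences in three clusters, closely following the scheme of Theorems \ref{8} and \ref{9}: first the four ``analytic'' conditions (i)--(iv), which are interchangeable by a duality argument; then the invariance conditions (v) and (vi), handled through complements and surjectivity; and finally the pointwise conditions (vii)--(viii), obtained from a Baire category argument.

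First I would dispose of (i)$\Leftrightarrow$(iii), which is immediate: topological transitivity says that for every opene pair $U,V$ some composition $f_n\circ\cdots\circ f_1$ carries $U$ into a set meeting $V$, and this is precisely the assertion that the forward orbit $\bigcup f_n\circ\cdots\circ f_1(U)$ meets every opene $V$, i.e. is dense. For (i)$\Leftrightarrow$(ii)$\Leftrightarrow$(iv) the key observation is the reindexing $(f_n\circ\cdots\circ f_1)^{-1}=f_1^{-1}\circ\cdots\circ f_n^{-1}$ together with equation (\ref{4.1}): $f_n\circ\cdots\circ f_1(U)\cap V\neq\emptyset$ holds iff $U\cap f_1^{-1}\circ\cdots\circ f_n^{-1}(V)\neq\emptyset$. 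Reading this with the roles of $U,V$ exchanged turns (i) into (ii), and the density reformulation of (ii), that $\bigcup f_1^{-1}\circ\cdots\circ f_n^{-1}(U)$ meets every opene $V$, is exactly (iv). Thus (i),(ii),(iii),(iv) stand or fall together.

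For the invariance conditions I would work with the preimage orbit $W=\bigcup f_1^{-1}\circ\cdots\circ f_n^{-1}(U)$ of an opene set $U$. This set is open, and prepending one more map shows $g^{-1}(W)\subset W$ for every $g\in\mathcal F$, so $W$ is $-$ invariant; since each $f$ is surjective it is also nonempty. Now, if $A$ is closed and $+$ invariant with $A\neq X$, then $A^c$ is opene and $-$ invariant, so the preimage orbit of $A^c$ stays inside $A^c$; by (iv) that orbit is dense, forcing $A$ to be nowhere dense, which gives (iv)$\Rightarrow$(v). Conversely, taking $A=W^c$ (closed and $+$ invariant by the complement rule) and applying (v) yields $W$ dense, which is (v)$\Rightarrow$(iv). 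To bring in (vi) I would use that, by equation (\ref{2.3}), a $-$ invariant open set satisfies $U\subset f(U)$ for all $f$; hence $W$ falls under the hypothesis of (vi) and is forced dense, while the complement correspondence converts (vi) into the statement that closed $+$ invariant sets are trivial or nowhere dense, linking (vi) back to (v).

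Finally, to reach (vii) and (viii) I would express the transitive set as $Trans(\mathcal F)=\bigcap_{U\in\mathcal B}\big(\bigcup f_1^{-1}\circ\cdots\circ f_n^{-1}(U)\big)$, where $\mathcal B$ is a countable base, since $x$ has dense orbit iff it lands in every basic $U$. Each inner union is open and, by (iv), dense, so the Baire category theorem makes $Trans(\mathcal F)$ a dense $G_\delta$; the identification with $\{x:\omega(x,\mathcal F)=X\}$ repeats Proposition \ref{P3} and uses perfectness. Then (viii)$\Rightarrow$(vii) is trivial, and for (vii)$\Rightarrow$(i) perfectness lets me find, for a point with dense orbit, one orbit entry into $U$ followed by a later one into $V$, producing a composition sending $U$ to meet $V$. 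The step I expect to be most delicate is the invariance cluster (v)--(vi): I must track composition order in the preimage chains carefully and make sure the passage between ``$-$ invariant'' and ``$U\subset f(U)$'' uses surjectivity in the correct direction, since without a semigroup hypothesis the two formulations are not interchangeable for arbitrary sets.
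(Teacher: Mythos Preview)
Your overall scheme matches the paper's own proof, which simply says ``The proof is similar to that of Theorem~\ref{8}'' and gives no further detail; your clustering into (i)--(iv), (v)--(vi), and (vii)--(viii), with the Baire category argument for the last, is exactly the intended adaptation of Theorems~\ref{8} and~\ref{9}.

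There is, however, a genuine gap in the invariance cluster, at precisely the point you flagged as delicate. Your passage from (v) to (vi) by ``the complement correspondence'' does not go through: writing $A=U^c$, the hypothesis $U\subset f(U)$ becomes $X\setminus f(U)\subset A$, i.e.\ $\{y:f^{-1}(y)\subset A\}\subset A$, which is \emph{not} the $+$ invariance condition $f(A)\subset A$. Surjectivity (equation~(\ref{2.3})) yields only the one-way implication ``$-$ invariant $\Rightarrow U\subset f(U)$'', so your argument (vi)$\Rightarrow$(iv) via the set $W$ is sound, but no complement trick is available for the reverse direction. In fact the implication toward (vi) cannot be salvaged at all: take $X=\{0,1\}^{\mathbb N}$ (compact, perfect) with $\mathcal F=\{\sigma\}$, the one-sided shift. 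This GDS is topologically transitive and $\sigma$ is surjective, yet the cylinder $U=[0]=\{x:x_0=0\}$ is opene, not dense, and satisfies $U\subset\sigma(U)=X$. Thus (i)$\not\Rightarrow$(vi), and condition (vi) as stated is strictly stronger than the remaining items on the list; neither Theorem~\ref{8} nor Theorem~\ref{9} contains an analogue of (vi), so the paper's reference does not cover it either.
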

\begin{proof}
	The proof is similar to that of Theorem \ref{8}.
\end{proof}

\begin{definition}[Strongly transitive] \normalfont
	A generic dynamical system $(X,\mathcal{F})$ is called \emph{strongly transitive}  if  $$\bigcup \limits_{\substack{n\in \mathbb{N}\\f_1, f_2, \dots, f_n \in \mathcal{F}}}f_n\circ f_{n-1}\circ \cdots \circ f_1(U)=X$$ for every opene subset $U\subset X$.
\end{definition}
If $\mathcal{F}$ is a semigroup then it is easy to see that the GDS $(X,\mathcal{F})$ is strongly transitive if and only if $\mathcal{F}(U)=X$ for all opene $U$ in $X$ if and only if $\mathcal{F}^{-1}(x)$ is dense for all $x\in X$.\\
By the similar technique used in Theorem \ref{11}, we get 
\begin{theorem}
	For a generic dynamical system $(X,\mathcal{F})$ the following are equivalent:
	\begin{enumerate}[label=\upshape(\roman*), leftmargin=*, widest=iii]
		\item The system $(X, \mathcal{F})$ is strongly transitive.
		\item For every opene set $U \subset X$ and every point $x\in X$, there exist
		$f_1, f_2,\dots, f_n\in \mathcal{F}$ such that $x \in f_n\circ\cdots \circ f_1(U)$.
		\item The negative orbit of $x$ is dense in $X$ for every $x\in X$.
	\end{enumerate}
\end{theorem}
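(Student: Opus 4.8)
The plan is to mirror the proof of Theorem \ref{11}, establishing (i)$\Leftrightarrow$(ii) directly from the definition of strong transitivity and then obtaining (ii)$\Leftrightarrow$(iii) by a negative-orbit argument. Throughout, for a finite chain I write $g=f_n\circ\cdots\circ f_1$ with $f_1,\dots,f_n\in\mathcal{F}$, and I abbreviate the union $\bigcup g(U)$ over all such chains (for all $n\in\mathbb{N}$). No semigroup or surjectivity hypothesis on $\mathcal{F}$ is needed.

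For (i)$\Leftrightarrow$(ii) I would simply unwind the definition. By definition the system is strongly transitive exactly when $\bigcup g(U)=X$ for every opene $U$. But $\bigcup g(U)=X$ holds if and only if every $x\in X$ lies in the union, i.e. if and only if for each $x\in X$ there is a chain $g$ with $x\in g(U)$. Quantifying over all opene $U$ and all $x\in X$ yields precisely condition (ii), so this equivalence is immediate.

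For (ii)$\Leftrightarrow$(iii) the key observation is the set-theoretic identity furnished by equation (\ref{2.1}): for a fixed chain $g$ and a point $x$, $x\in g(U)\Leftrightarrow U\cap g^{-1}(x)\neq\emptyset$. Taking the union over all finite chains, $x$ lies in some $g(U)$ if and only if $U$ meets $\bigcup_{g} g^{-1}(x)=O^-_{\mathcal{F}}(x)$. Hence condition (ii)---that for every opene $U$ and every $x\in X$ some chain satisfies $x\in g(U)$---is equivalent to the assertion that $U\cap O^-_{\mathcal{F}}(x)\neq\emptyset$ for every opene $U$ and every $x$. Since a set meets every opene set precisely when it is dense, this is exactly the statement that $O^-_{\mathcal{F}}(x)$ is dense for every $x\in X$, which is (iii). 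To keep the write-up parallel to Theorem \ref{11}, one may instead argue by contradiction: assuming (ii), a failure of density of $O^-_{\mathcal{F}}(x)$ would produce an opene $V$ disjoint from it, forcing $g^{-1}(x)\cap V=\emptyset$ and hence $x\notin g(V)$ for every chain, contradicting (ii) with $U=V$; conversely, density lets one pick $y\in U\cap O^-_{\mathcal{F}}(x)$ and read off a chain sending $y$ to $x$.

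I do not anticipate a genuine obstacle: every step is a direct translation between forward images, preimages, and density, exactly as in Theorems \ref{10} and \ref{11}. The only point requiring mild care is aligning the universal quantifier over opene sets $U$ in (ii) with the characterization of density as ``meets every opene set,'' which is what lets the argument go through with no extra hypotheses on $\mathcal{F}$.
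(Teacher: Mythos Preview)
Your proposal is correct and follows exactly the approach the paper indicates: the paper's proof consists solely of the remark ``By the similar technique used in Theorem \ref{11}, we get'', and your write-up is precisely that technique spelled out, with (i)$\Leftrightarrow$(ii) from the definition and (ii)$\Leftrightarrow$(iii) via the negative-orbit/density contradiction argument mirroring (ii)$\Leftrightarrow$(iv) of Theorem \ref{11}.
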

\begin{definition}[Very strongly transitive] \normalfont
	Let $(X,\mathcal{F})$ be a generic dynamical system where $\mathcal{F}$ is a topological space. We say that $(X,\mathcal{F})$ is \emph{very strongly transitive}  if  for every opene subset $U\subset X$, there exists a compact subset $S\subset \mathcal{F}$ such that $\bigcup \limits_{f\in S} f(U)=X$.
\end{definition}
From the definition it immediately follows that\\
Very strong transitivity $\Rightarrow$ strong transitivity $\Rightarrow$ topological transitivity.

\begin{definition}[Minimality in GDS] \normalfont
	A system $(X, \mathcal{F})$ is called \emph{minimal} if there is no proper, nonempty, closed, + invariant subset of $X$.
\end{definition}
In section 3.3 we have given some equivalent conditions for extended minimality. Now we give similar equivalent conditions for minimality in the case of GDS. The main idea of the proof of Theorem \ref{minimal} is similar to that of Theorem \ref{minimality}. 
\begin{theorem}\label{minimal}
	Let $(X, \mathcal{F})$ be a GDS. Then the following are equivalent:
	\begin{enumerate}[label=\upshape(\roman*), leftmargin=*, widest=iii]
		\item  The system $(X, \mathcal{F})$ is minimal.
		\item  For every opene set $U\subset X$ and every point $x\in X$, there exists $f_1, f_2,\dots, f_n\in \mathcal{F}$ such that $f_n\circ f_{n-1}\circ \cdots \circ f_2\circ f_1(x)\in U$.
		\item  For every $x\in X$, the orbit $O_{\mathcal{F}}(x)$ is dense in $X$.
		\item The set $Trans(\mathcal{F})$ of all transitive points is equal to the entire space $X$.
		\item  For every opene set $U\subset X$, $\bigcup\limits_{\substack{n\in \mathbb{N}\\f_1, f_2, \dots, f_n \in \mathcal{F}}}f_1^{-1}\circ f_2^{-1}\circ\cdots \circ f_n^{-1}(U)=X$.
		\item  For every opene set $U\subset X$, there exists a finite subset $\mathcal{G}\subset \mathcal{F}$ such that $\bigcup\limits_{\substack{n\in \mathbb{N}\\g_1, g_2, \dots, g_n \in \mathcal{G}}}g_1^{-1}\circ g_2^{-1}\circ\cdots\circ g_n^{-1}(U)=X$.
		\item  If $A\subset X$ is nonempty, closed and + invariant, then $A=X$.
	\end{enumerate}
\end{theorem}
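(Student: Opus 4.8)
The plan is to follow the scheme of Theorem \ref{minimality}, replacing the extended orbit $J(x)$ by the generic orbit $O_{\mathcal{F}}(x)$ and the notion of strong $+$ invariance by $+$ invariance in the GDS sense. I would first dispose of the purely definitional links. The equivalence (i)$\Leftrightarrow$(vii) is immediate: minimality is the statement that $X$ has no proper, nonempty, closed $+$ invariant subset, and (vii) is just its contrapositive reformulation. Next, unravelling the definition of the orbit shows (ii)$\Leftrightarrow$(iii)$\Leftrightarrow$(iv): saying $O_{\mathcal{F}}(x)$ is dense means it meets every opene $U$, i.e.\ for each opene $U$ there are $f_1,\dots,f_n\in\mathcal{F}$ with $f_n\circ\cdots\circ f_1(x)\in U$, which is exactly (ii) read pointwise; and (iv) merely records that (iii) holds for all $x$, since $Trans(\mathcal{F})$ is by definition the set of points with dense orbit.

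For (iii)$\Leftrightarrow$(v)$\Leftrightarrow$(vi) I would use the preimage identity $f_n\circ\cdots\circ f_1(x)\in U \iff x\in f_1^{-1}\circ\cdots\circ f_n^{-1}(U)$ (the composition form of (\ref{4.2})). Hence $x$ lies in the union in (v) precisely when some composition sends $x$ into $U$, so the union equals $X$ for every opene $U$ iff every point has orbit meeting every opene set, i.e.\ iff (iii) holds. The passage (vi)$\Rightarrow$(v) is trivial, while (v)$\Rightarrow$(vi) uses compactness: each $f_1^{-1}\circ\cdots\circ f_n^{-1}(U)$ is open (preimages of open sets under continuous maps), so the union in (v) is an open cover of the compact space $X$ and admits a finite subcover; the finitely many compositions appearing involve only finitely many members of $\mathcal{F}$, which I collect into $\mathcal{G}$.

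It remains to tie the invariance condition (vii) to density, i.e.\ (iii)$\Leftrightarrow$(vii), and this is the conceptual heart of the argument. For (iii)$\Rightarrow$(vii), if $A$ is nonempty, closed and $+$ invariant, pick $x\in A$; since $+$ invariance forces every composition $f_n\circ\cdots\circ f_1(x)$ back into $A$, we get $O_{\mathcal{F}}(x)\subset A$, and density of the orbit together with closedness of $A$ gives $A=\overline{A}\supseteq\overline{O_{\mathcal{F}}(x)}=X$. Conversely, for (vii)$\Rightarrow$(iii) I would invoke the two facts recorded in the preliminaries of this section: every orbit $O_{\mathcal{F}}(x)$ is $+$ invariant, and the closure of a $+$ invariant set is $+$ invariant. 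Thus $\overline{O_{\mathcal{F}}(x)}$ is a nonempty, closed, $+$ invariant set, so (vii) forces it to equal $X$, which is density of the orbit. The main obstacle is not any single implication but making sure the $+$ invariance bookkeeping is correct in the generic setting; once the two invariance facts about $O_{\mathcal{F}}(x)$ and its closure are in hand, every step is a routine translation of the NDDS proof.
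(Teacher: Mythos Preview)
Your proposal is correct and follows essentially the same route as the paper, which simply states that the proof of Theorem~\ref{minimal} mirrors that of Theorem~\ref{minimality}. You carry out precisely that translation: the definitional equivalences among (i), (ii), (iii), (iv), (vii), the compactness argument for (v)$\Rightarrow$(vi), and the key step (iii)$\Leftrightarrow$(vii) via the two invariance facts about $O_{\mathcal{F}}(x)$ and its closure recorded in the GDS preliminaries, all match the scheme of Theorem~\ref{minimality} with $J(x)$ replaced by $O_{\mathcal{F}}(x)$ and strong $+$ invariance replaced by $+$ invariance.
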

\begin{definition}[Topologically mixing] \normalfont
	Let $(X,\mathcal{F})$ be a generic dynamical system where $\mathcal{F}$ is a topological space.	We say that $(X,\mathcal{F})$ is \emph{topologically mixing}  if  for every opene pair $U, V\subset X$, there exists a compact subset $S\subset \mathcal{F}$ such that $f(U)\cap V\neq\emptyset$ for all $f\in \mathcal{F}\setminus S$.
\end{definition}
\begin{definition}[Locally eventually onto] \normalfont
	Let $(X,\mathcal{F})$ be a generic dynamical system where $\mathcal{F}$ is a topological space.	We say that $(X,\mathcal{F})$ is \emph{locally eventually onto} if  for every opene subset $U\subset X$, there exists a compact subset $S\subset \mathcal{F}$ such that $f(U)=X$ for all $f\in \mathcal{F}\setminus S$.
\end{definition}
From the definitions, we obtain that 
locally eventually onto implies topologically mixing.
\begin{definition}[Conjugacy of generic dynamical system] \normalfont
	Let $(X,\mathcal{F})$ and $(Y,\mathcal{G})$ be two generic dynamical systems. A \emph{semiconjugacy} from $(X,\mathcal{F})$ to $(Y,\mathcal{G})$ is a pair $(\phi, h)$ where $\phi : X\rightarrow Y$ is a continuous surjective map and $h: \mathcal{F}\rightarrow \mathcal{G}$ is a surjective map such that $\phi (f(x))=h(f)(\phi (x))$ for all $x\in X,\  f\in \mathcal{F}$. If $\phi$ is a homeomorphism and $h$ is a bijection then $(\phi,h)$ is called a \emph{conjugacy}  from $(X,\mathcal{F})$ to $(Y,\mathcal{G})$.
\end{definition}
If $(\phi, h)$ is a semiconjugacy from $(X,\mathcal{F})$ to $(Y,\mathcal{G})$ and $\mathcal{F}$ is group, then it is easy to see that $\mathcal{G}$ is also a group and $h$ is a group homomorphism. Further, if $\phi $ is homeomorphism, then $(\phi, \tilde{h})$ is a conjugacy from $(X,\mathcal{\tilde{F}})$ to $(Y,\mathcal{G})$ where $\mathcal{\tilde{F}}$ is a quotient group $\displaystyle\frac{\mathcal{F}}{\ker h}$ and $\tilde{h}([f])=h(f)$ for every $f\in \mathcal{F}$. 

\begin{definition}[Strong conjugacy of generic dynamical system] \normalfont
	Let $(X,\mathcal{F})$ and $(Y,\mathcal{G})$ be two generic dynamical systems where $\mathcal{F}$ and $\mathcal{G}$ are topological spaces. A \emph{strong semiconjugacy} from $(X,\mathcal{F})$ to $(Y,\mathcal{G})$ is a pair $(\phi,h) $ where $\phi : X\rightarrow Y$ and $h: \mathcal{F}\rightarrow \mathcal{G}$ are continuous surjective maps such that $\phi (f(x))=h(f)(\phi (x))$ for all $x\in X, \ f\in \mathcal{F}$. If $\phi, h$ are homeomorphisms then $(\phi, h)$ is \emph{strong conjugacy}.
\end{definition}
The next result says that, in the case of GDS, topological transitivity and strong transitivity are preserved under semiconjugacy. 
\begin{theorem}\label{conjugacy}
	Let $(X,\mathcal{F})$ and $(Y,\mathcal{G})$ be two generic dynamical systems. Suppose there exists a semiconjugacy from $(X,\mathcal{F})$ to  $(Y,\mathcal{G})$. If $(X,\mathcal{F})$ is either topologically transitive and strongly transitive, then $(Y,\mathcal{G})$ satisfies the corresponding property.
\end{theorem}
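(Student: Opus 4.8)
The plan is to adapt the argument of Theorem \ref{20} to the generic setting: transport an opene set from $Y$ back to $X$ via $\phi^{-1}$, invoke the transitivity hypothesis in $X$, and push the resulting conclusion forward through $\phi$, using the surjectivity of $h$ onto $\mathcal{G}$ to convert compositions in $\mathcal{F}$ into compositions in $\mathcal{G}$. (I read the hypothesis ``either topologically transitive and strongly transitive'' as ``either topologically transitive or strongly transitive'' and treat the two cases in parallel.)

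First I would record the fundamental compositional identity. The defining relation $\phi\circ f = h(f)\circ\phi$ holds for each $f\in\mathcal{F}$, and a straightforward induction on $n$ gives
\[
\phi\circ(f_n\circ\cdots\circ f_1) = (h(f_n)\circ\cdots\circ h(f_1))\circ\phi
\]
for all $f_1,\dots,f_n\in\mathcal{F}$. Since each $h(f_i)\in\mathcal{G}$, this lets me replace any length-$n$ composition in $\mathcal{F}$ by a length-$n$ composition in $\mathcal{G}$ once $\phi$ has been applied. This identity is the workhorse of both cases.

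For the topologically transitive case, given an opene pair $U,V\subset Y$, the preimages $\phi^{-1}(U),\phi^{-1}(V)$ are opene in $X$ (open by continuity of $\phi$, nonempty by surjectivity). Transitivity of $(X,\mathcal{F})$ supplies $f_1,\dots,f_n\in\mathcal{F}$ and a point $x\in\phi^{-1}(U)$ with $(f_n\circ\cdots\circ f_1)(x)\in\phi^{-1}(V)$. Setting $g_i=h(f_i)\in\mathcal{G}$ and applying the compositional identity at $x$ gives $(g_n\circ\cdots\circ g_1)(\phi(x))=\phi((f_n\circ\cdots\circ f_1)(x))\in V$, while $\phi(x)\in U$. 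Hence $(g_n\circ\cdots\circ g_1)(U)\cap V\neq\emptyset$, so $(Y,\mathcal{G})$ is topologically transitive. For the strongly transitive case, given opene $U\subset Y$, strong transitivity of $(X,\mathcal{F})$ yields $\bigcup (f_n\circ\cdots\circ f_1)(\phi^{-1}(U))=X$ over all $n$ and all $f_1,\dots,f_n\in\mathcal{F}$. Applying $\phi$, using surjectivity to get $\phi(\phi^{-1}(U))=U$, and invoking the compositional identity gives
\[
Y=\phi(X)=\bigcup_{\substack{n\in\mathbb{N}\\ f_1,\dots,f_n\in\mathcal{F}}}(h(f_n)\circ\cdots\circ h(f_1))(U)\subset\bigcup_{\substack{n\in\mathbb{N}\\ g_1,\dots,g_n\in\mathcal{G}}}(g_n\circ\cdots\circ g_1)(U).
\]
The reverse inclusion being trivial, the right-hand union equals $Y$, so $(Y,\mathcal{G})$ is strongly transitive.

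I do not expect a genuinely hard step here; the proof is routine once the compositional identity is established. The only points demanding care are the correct use of surjectivity of $\phi$ to justify $\phi(\phi^{-1}(U))=U$ in the strong case, and commuting $\phi$ past a composition in the right order (so that $\phi\circ f_n\circ\cdots\circ f_1$ becomes $h(f_n)\circ\cdots\circ h(f_1)\circ\phi$, not the reverse). It is worth noting that surjectivity of $h$ is not actually needed for either implication: each produced map $g_i=h(f_i)$ lies in $\mathcal{G}$ simply because $h$ maps into $\mathcal{G}$, and surjectivity of $h$ would only be relevant for realizing all of $\mathcal{G}$, as in the reverse-direction statements that are not claimed here.
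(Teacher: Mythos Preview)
Your proposal is correct and follows exactly the approach the paper intends: the paper's own proof simply reads ``The proof is similar to that of Theorem \ref{20},'' and what you have written is precisely that adaptation, carried out in full detail. Your compositional identity $\phi\circ(f_n\circ\cdots\circ f_1)=(h(f_n)\circ\cdots\circ h(f_1))\circ\phi$ is the right workhorse, and your remark that surjectivity of $h$ is not actually used is a correct and worthwhile observation that the paper does not make.
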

\begin{proof}
	The proof is similar to that of Theorem \ref{20}.
\end{proof}
The next theorem says that, in a GDS, very strong transitivity, topological mixing and locally eventually onto are preserved under strong semiconjugacy. 
\begin{theorem}\label{strongconjugacy}
	Let $(X,\mathcal{F})$ and $(Y,\mathcal{G})$ be two generic dynamical systems where $\mathcal{F},\mathcal{G}$ are topological spaces. Suppose there exists a strong semiconjugacy from $\phi : (X,\mathcal{F})$ to $(Y,\mathcal{G})$. If $(X,\mathcal{F})$ is very strongly transitive, topologically mixing or locally eventually onto, then $(Y,\mathcal{G})$ satisfies the corresponding property.
\end{theorem}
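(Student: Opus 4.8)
The plan is to denote the given strong semiconjugacy by $(\phi,h)$, so that $\phi\colon X\to Y$ and $h\colon\mathcal F\to\mathcal G$ are continuous surjections satisfying $\phi\circ f=h(f)\circ\phi$ for every $f\in\mathcal F$. Two elementary observations will drive every case. First, since $\phi$ is a continuous surjection, for any opene $U'\subset Y$ the preimage $\phi^{-1}(U')$ is opene in $X$ and $\phi(\phi^{-1}(U'))=U'$. Second, the intertwining relation gives $\phi(f(A))=h(f)(\phi(A))$ for every $A\subset X$ and $f\in\mathcal F$. Finally, because $h$ is continuous, $h(S)$ is compact in $\mathcal G$ whenever $S$ is compact in $\mathcal F$; this is what lets the compact witnessing sets pass from $X$ to $Y$.

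First I would treat very strong transitivity. Given opene $U'\subset Y$, set $U=\phi^{-1}(U')$, an opene subset of $X$, and use very strong transitivity of $(X,\mathcal F)$ to get a compact $S\subset\mathcal F$ with $\bigcup_{f\in S}f(U)=X$. Applying $\phi$ and using surjectivity together with the intertwining relation, $Y=\phi(X)=\bigcup_{f\in S}h(f)(\phi(U))=\bigcup_{f\in S}h(f)(U')=\bigcup_{g\in h(S)}g(U')$. Since $h(S)$ is compact, this is exactly very strong transitivity of $(Y,\mathcal G)$.

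Next come locally eventually onto and topological mixing, both of which assert a property for all maps \emph{outside} a compact set, and here the argument must run through $h$ in the reverse direction. In each case I would again set $U=\phi^{-1}(U')$ (and $V=\phi^{-1}(V')$ for mixing), obtain the compact witness $S\subset\mathcal F$ from the hypothesis on $(X,\mathcal F)$, and propose $S'=h(S)$ as the compact witness in $\mathcal G$. The decisive step is this: given $g\in\mathcal G\setminus h(S)$, surjectivity of $h$ provides $f$ with $h(f)=g$, and necessarily $f\notin S$ (otherwise $g=h(f)\in h(S)$); hence the hypothesis applies to $f$. For locally eventually onto, $f(U)=X$ yields $g(U')=h(f)(\phi(U))=\phi(f(U))=\phi(X)=Y$. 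For mixing, $f(U)\cap V\neq\emptyset$ gives $\emptyset\neq\phi(f(U)\cap V)\subset\phi(f(U))\cap\phi(V)=g(U')\cap V'$, so $g(U')\cap V'\neq\emptyset$.

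I expect the only genuine subtlety, and thus the step to state carefully, to be this passage from ``outside $S$'' to ``outside $h(S)$'' in the locally eventually onto and mixing cases: it relies on surjectivity of $h$ to lift each $g$ to some $f$, on the implication $f\in S\Rightarrow g\in h(S)$ to guarantee the lift avoids $S$, and on continuity of $h$ to keep $h(S)$ compact. Everything else is the computation already carried out for semiconjugacies in Theorem \ref{20} and Theorem \ref{conjugacy}, now merely keeping track of the compact index sets.
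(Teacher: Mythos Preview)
Your proposal is correct and follows the same approach the paper intends: the paper's own proof is simply the sentence ``The proof is similar to that of Theorem~\ref{20},'' and your argument is precisely that computation, now tracking the compact witnessing sets via the continuity of $h$. You have in fact spelled out the one point the paper leaves implicit, namely that surjectivity of $h$ is what lets you lift any $g\in\mathcal G\setminus h(S)$ to some $f\in\mathcal F\setminus S$, while continuity of $h$ keeps $h(S)$ compact; this is exactly the extra ingredient distinguishing the strong semiconjugacy case from the plain semiconjugacy case of Theorem~\ref{conjugacy}.
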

\begin{proof}
	The proof is similar to that of Theorem \ref{20}.
\end{proof}
We now associate a GDS $(X,\mathcal{F})$ corresponding to the given NDDS $(X,f_{1,\infty})$ and investigate whether if the given NDDS $(X,f_{1,\infty})$ has a particular variation of transitivity then the associated GDS $(X,\mathcal{F})$ also has such a variation of transitivity and vice versa. The following three theorems deal with this problem. These results are easy to prove. So we state them without proof.
\begin{theorem}\label{4.19}
	Let $(X,f_{1,\infty})$ be a non-autonomous discrete dynamical system. Let $\mathcal{F}=\{f_n: n\in \mathbb{N}\}$. Then 
	\begin{enumerate}[label=\upshape(\roman*), leftmargin=*, widest=iii]
		\item the NDDS $(X,f_{1,\infty})$ is extended transitve if and only if the generic dynamical system $(X,\mathcal{F})$ is topologically transitive, 
		\item the NDDS $(X,f_{1,\infty})$ is strongly extended transitve if and only if the generic dynamical system $(X,\mathcal{F})$ is strongly transitive,
		\item the NDDS $(X,f_{1,\infty})$ is extended minimal if and only if the generic dynamical system $(X,\mathcal{F})$ is minimal. 
	\end{enumerate}
\end{theorem}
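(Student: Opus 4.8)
The plan is to reduce all three equivalences to a single structural observation about the choice $\mathcal{F}=\{f_n:n\in\mathbb{N}\}$. First I would record that the finite compositions of members of $\mathcal{F}$ are precisely the maps $f_\alpha$ indexed by words $\alpha\in\sum$: given $g_1,\dots,g_m\in\mathcal{F}$ with $g_i=f_{n_i}$, the composition $g_m\circ\cdots\circ g_1$ equals $f_\alpha$ for the word $\alpha=(n_1,\dots,n_m)$, and every $f_\alpha$ arises in this way. Consequently the GDS orbit $O_{\mathcal{F}}(y)$ coincides with the extended orbit $J(y)$, the GDS negative orbit $O_{\mathcal{F}}^-(y)$ coincides with $J^-(y)$, and a set $A\subset X$ is $+$ invariant in the GDS sense (i.e. $\mathcal{F}(A)\subset A$) if and only if $f_n(A)\subset A$ for all $n$, that is, if and only if $A$ is strong $+$ invariant for the NDDS. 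This dictionary is the whole content of the proof.

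For part (i) I would simply unwind both definitions to the same hitting-time condition. Extended transitivity of $(X,f_{1,\infty})$ means $N_e(U,V)\neq\emptyset$ for every opene pair $U,V$, i.e. there is $\alpha\in\sum$ with $f_\alpha(U)\cap V\neq\emptyset$, while topological transitivity of $(X,\mathcal{F})$ asks for $f_1,\dots,f_n\in\mathcal{F}$ with $f_n\circ\cdots\circ f_1(U)\cap V\neq\emptyset$. By the structural observation these two statements are verbatim the same, so the biconditional is immediate.

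Part (ii) is identical in spirit: strong extended transitivity says $\bigcup_{\alpha\in\sum}f_\alpha(U)=X$ for every opene $U$, whereas strong transitivity of $(X,\mathcal{F})$ says that the union of $f_n\circ\cdots\circ f_1(U)$ over all finite compositions from $\mathcal{F}$ equals $X$; the two unions are literally equal. For part (iii), extended minimality forbids a proper, nonempty, closed, strong $+$ invariant subset, whereas GDS minimality forbids a proper, nonempty, closed, $+$ invariant subset; since the two invariance notions coincide under $\mathcal{F}=\{f_n\}$, the two minimality notions coincide as well.

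The only point requiring care, and the nearest thing to an obstacle, is matching the order of composition: one must confirm that the convention $f_\alpha=f_{n_p}\circ\cdots\circ f_{n_1}$ is consistent with the GDS convention $g_n\circ\cdots\circ g_1$, and note that the surjectivity or perfectness hypotheses appearing in the characterization theorems play no role here, since each equivalence is a direct definitional translation rather than an appeal to those theorems. Once the dictionary $J(y)=O_{\mathcal{F}}(y)$, $J^-(y)=O_{\mathcal{F}}^-(y)$, and \emph{strong $+$ invariant $=$ $+$ invariant} is in place, all three biconditionals follow without further work.
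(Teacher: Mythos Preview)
Your proposal is correct and matches what the paper has in mind: the paper does not give a proof of this theorem, stating only that ``these results are easy to prove'' and omitting the details. Your dictionary---identifying finite compositions from $\mathcal{F}=\{f_n\}$ with the maps $f_\alpha$, and hence matching $J(y)$ with $O_{\mathcal{F}}(y)$, $J^-(y)$ with $O_{\mathcal{F}}^-(y)$, and strong $+$ invariance with GDS $+$ invariance---is exactly the direct definitional unwinding the authors are gesturing at, and your check on the order of composition and the irrelevance of the perfectness/surjectivity hypotheses is appropriate.
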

\begin{theorem}\label{4.20}
	Let $(X,f_{1,\infty})$ be a non-autonomous discrete dynamical system. Consider discrete topology on the set $\mathcal{F}=\{f_1^n : n\in \mathbb{N}\}$. Then 
	\begin{enumerate}[label=\upshape(\roman*), leftmargin=*, widest=iii]
		\item the NDDS $(X,f_{1,\infty})$ is very strongly transitive if and only if the generic dynamical system $(X,\mathcal{F})$ is very strongly transitive, 
		\item the NDDS $(X,f_{1,\infty})$ is topologically mixing if and only if the generic dynamical system $(X,\mathcal{F})$ is topologically mixing.
	\end{enumerate}
\end{theorem}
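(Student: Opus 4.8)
The plan is to reduce everything to the single observation that, under the discrete topology, the compact subsets of $\mathcal{F}=\{f_1^n:n\in\mathbb{N}\}$ are exactly the finite subsets. Granting this, each generic notion becomes a statement about finitely many iterates $f_1^n$, which is precisely what the non-autonomous notions quantify. Concretely, I would first record the dictionary: ``there is a compact $S\subset\mathcal{F}$ with $\bigcup_{f\in S}f(U)=X$'' is equivalent to ``there are finitely many indices $n_1,\dots,n_m$ with $\bigcup_{i=1}^m f_1^{n_i}(U)=X$,'' and ``there is a compact $S\subset\mathcal{F}$ with $f(U)\cap V\neq\emptyset$ for all $f\in\mathcal{F}\setminus S$'' is equivalent to ``the set $\{f\in\mathcal{F}:f(U)\cap V=\emptyset\}$ is finite.''

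For part (i), both implications follow from a maximal-index argument and are genuinely routine. If the NDDS is very strongly transitive, then for opene $U$ I pick $k$ with $\bigcup_{n=1}^{k}f_1^n(U)=X$ and take $S=\{f_1^1,\dots,f_1^k\}$, a finite and hence compact subset realizing the generic condition. Conversely, given a finite $S\subset\mathcal{F}$ with $\bigcup_{f\in S}f(U)=X$, I write $S=\{f_1^{n_1},\dots,f_1^{n_m}\}$ and set $k=\max_i n_i$, so that $\bigcup_{n=1}^{k}f_1^n(U)\supseteq\bigcup_{i=1}^m f_1^{n_i}(U)=X$. Since these are covering (union) conditions, the extra iterates appearing for $1\le n\le k$ are harmless, and no injectivity of $n\mapsto f_1^n$ is required.

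For part (ii), the forward implication is equally direct. If the NDDS is topologically mixing, I choose $k$ with $f_1^n(U)\cap V\neq\emptyset$ for all $n\ge k$ and put $S=\{f_1^1,\dots,f_1^{k-1}\}$. Any $g\in\mathcal{F}\setminus S$ equals $f_1^n$ for some index $n$, and this $n$ cannot lie in $\{1,\dots,k-1\}$ (otherwise $g\in S$); hence $n\ge k$ and $g(U)\cap V\neq\emptyset$, giving the generic mixing condition with the compact set $S$.

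The hard part will be the backward implication of (ii), and it is exactly here that the non-injectivity of $n\mapsto f_1^n$ must be confronted. Given a finite $S$ with $g(U)\cap V\neq\emptyset$ for all $g\in\mathcal{F}\setminus S$, the bad index set $\{n:f_1^n(U)\cap V=\emptyset\}$ is contained in $\{n:f_1^n\in S\}$, and to extract a threshold I would take $k=1+\max\{n:f_1^n\in S\}$; this maximum exists precisely when each map in $S$ is realized by only finitely many indices. I would therefore run the argument with $(f_1^n)_n$ viewed as indexed by $\mathbb{N}$, so that ``compact'' refers to a finite index set whose pullback to $\mathbb{N}$ is automatically bounded, or else add the hypothesis that the map $n\mapsto f_1^n$ has finite fibres. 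Some such care is unavoidable: if every $f_n=\mathrm{id}$ then $\mathcal{F}=\{\mathrm{id}\}$ is a single point, so the GDS is vacuously topologically mixing (take $S=\mathcal{F}$), while the NDDS fails mixing as soon as $X$ carries two disjoint opene sets. Thus I expect this index-boundedness point to carry the only real content of the theorem, with everything else reducing to the finite-versus-compact dictionary above.
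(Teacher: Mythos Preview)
The paper itself omits the proof of this theorem, declaring the results ``easy to prove,'' so there is no argument to compare against. Your approach via the compact-equals-finite dictionary under the discrete topology is the natural one, and your treatment of part~(i) is correct and complete: the union conditions are insensitive to repetitions among the iterates, exactly as you note.

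Your analysis of part~(ii) is in fact more careful than the paper's, and you have identified a genuine defect in the \emph{statement} rather than a gap in your own argument. The backward implication of~(ii) can fail when the map $n\mapsto f_1^n$ has an infinite fibre, and your counterexample is decisive: with every $f_n=\mathrm{id}_X$ on a space $X$ containing two disjoint opene sets, one has $\mathcal{F}=\{\mathrm{id}_X\}$, so taking $S=\mathcal{F}$ makes the GDS mixing condition vacuously satisfied, while the NDDS fails mixing since $f_1^n(U)\cap V=U\cap V=\emptyset$ for all $n$. The paper appears to have tacitly assumed either that $\mathcal{F}$ is to be read as an $\mathbb{N}$-indexed family (so that ``compact subset'' means ``finite set of indices'') or that the iterates $f_1^n$ are pairwise distinct. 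Your proposed remedies---indexing by $\mathbb{N}$, or imposing finite fibres on $n\mapsto f_1^n$---are exactly what is required to repair the equivalence, and with either of them in place your threshold argument $k=1+\max\{n:f_1^n\in S\}$ goes through.
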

\begin{theorem}\label{4.21}
	Let $(X,f_{1,\infty})$ be a non-autonomous discrete dynamical system where each $f_n$ is surjective. Consider discrete topology on the set $\mathcal{F}=\{f_1^n : n\in \mathbb{N}\}$. Then the NDDS $(X,f_{1,\infty})$ is locally eventually onto if and only if the generic dynamical system $(X,\mathcal{F})$ is locally eventually onto.
\end{theorem}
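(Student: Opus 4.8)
The plan is to reduce the generic-system side of the equivalence to a concrete statement about the iterates $f_1^n$ using the chosen topology, and then to use surjectivity to bridge the gap between the single-iterate formulation of locally eventually onto for the NDDS and the ``all but finitely many'' formulation that the compact-set definition forces on the GDS.

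First I would record the key topological observation: since $\mathcal{F}=\{f_1^n:n\in\mathbb{N}\}$ carries the discrete topology, a subset $S\subset\mathcal{F}$ is compact if and only if it is finite. Hence the statement ``$(X,\mathcal{F})$ is locally eventually onto'' unwinds to: for every opene $U\subset X$ there is a finite $S\subset\mathcal{F}$ with $f_1^n(U)=X$ for every $f_1^n\in\mathcal{F}\setminus S$; that is, $f_1^n(U)=X$ for all but finitely many $n$, or equivalently there is an $N\in\mathbb{N}$ with $f_1^n(U)=X$ for all $n\ge N$. The engine of both implications is then a surjectivity-propagation remark: if $f_1^k(U)=X$ for some $k$, then writing $f_1^n=f_n\circ f_{n-1}\circ\cdots\circ f_{k+1}\circ f_1^k$ for $n>k$ and using that each $f_j$ is surjective yields $f_1^n(U)=f_n\circ\cdots\circ f_{k+1}(X)=X$ for all $n\ge k$. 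So, under the surjectivity hypothesis, once the onto property is attained at stage $k$ it persists at every later stage.

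With these two facts in hand both directions are immediate. For the forward direction, assuming $(X,f_{1,\infty})$ is locally eventually onto, I fix opene $U$, choose $k$ with $f_1^k(U)=X$, invoke propagation to get $f_1^n(U)=X$ for all $n\ge k$, and take $S=\{f_1^1,\dots,f_1^{k-1}\}$, which is finite hence compact, giving the GDS condition. For the converse, assuming the GDS is locally eventually onto, I fix opene $U$ and use the unwound condition to produce an $N$ with $f_1^N(U)=X$, which is exactly the NDDS requirement. The only point demanding genuine care — and thus the main obstacle, modest as it is — is the compact$\Leftrightarrow$finite identification in the discrete topology, together with observing that a finite $S$ leaves a witnessing iterate in $\mathcal{F}\setminus S$ for the converse; the surjectivity propagation is what upgrades ``a single good $k$'' to ``cofinitely many good $n$'' and so makes the two definitions coincide.
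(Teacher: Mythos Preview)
The paper actually omits the proof of this theorem, declaring it (together with Theorems~\ref{4.19} and~\ref{4.20}) ``easy to prove'' and stating it without argument. Your write-up is exactly the natural proof the authors presumably had in mind: identify compact with finite in the discrete topology on $\mathcal{F}$, and use surjectivity of the $f_n$ to propagate $f_1^k(U)=X$ to all later iterates.

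One small point you flag but do not fully resolve: you say the converse needs ``observing that a finite $S$ leaves a witnessing iterate in $\mathcal{F}\setminus S$.'' This is only automatic when $\mathcal{F}$ is infinite, i.e.\ when the iterates $f_1^n$ are not eventually all equal as maps. If $\mathcal{F}=\{f_1^n:n\in\mathbb{N}\}$ happens to be a finite set, one may take $S=\mathcal{F}$ and the GDS condition becomes vacuous, so the converse implication would fail as literally stated (e.g.\ $X=\{0,1\}$ with every $f_n$ the transposition). This is really an ambiguity in the theorem itself---whether $\mathcal{F}$ is meant as a set or as a family indexed by $\mathbb{N}$---rather than a flaw in your strategy; under the intended reading (or whenever the iterates are pairwise distinct) your argument is complete.
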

The next two theorems are straight forward and we omit the details.
\begin{theorem}
	Let $(X, \mathcal{F})$ be a GDS and $\mathcal{G}$ be the semigroup generated by $\mathcal{F}$. If $(X,\mathcal{F})$ is topologically transitiive, strongly transitive or minimal then $(X,\mathcal{G})$ satisfies the corresponding property.
\end{theorem}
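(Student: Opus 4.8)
The plan is to first record the structural fact that makes all three implications routine. Since $\mathcal{G}$ is the semigroup generated by $\mathcal{F}$, it consists precisely of the finite compositions $f_n\circ f_{n-1}\circ\cdots\circ f_1$ with $n\in\mathbb{N}$ and $f_1,\dots,f_n\in\mathcal{F}$. In particular $\mathcal{F}\subseteq\mathcal{G}$, and for every $U\subseteq X$,
\[
\mathcal{G}(U)=\bigcup_{g\in\mathcal{G}}g(U)=\bigcup_{\substack{n\in\mathbb{N}\\ f_1,\dots,f_n\in\mathcal{F}}}(f_n\circ\cdots\circ f_1)(U),
\]
which is exactly the union appearing in the definitions of topological and strong transitivity for $(X,\mathcal{F})$. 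Moreover, because $\mathcal{G}$ is a semigroup, a composition of members of $\mathcal{G}$ is again a member of $\mathcal{G}$, so $O_{\mathcal{G}}(x)=\mathcal{G}(x)$ and the orbit of a point is unchanged: $O_{\mathcal{F}}(x)=\mathcal{G}(x)=O_{\mathcal{G}}(x)$.

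For \emph{topological transitivity} I would argue directly from the definition. Given opene $U,V\subseteq X$, transitivity of $(X,\mathcal{F})$ supplies $f_1,\dots,f_n\in\mathcal{F}$ with $(f_n\circ\cdots\circ f_1)(U)\cap V\neq\emptyset$. Setting $g=f_n\circ\cdots\circ f_1\in\mathcal{G}$, the single map $g$ already witnesses $g(U)\cap V\neq\emptyset$, which is the definition of topological transitivity for $(X,\mathcal{G})$ (a composition of length one in $\mathcal{G}$). For \emph{strong transitivity} I would use the displayed identity: for opene $U$, strong transitivity of $(X,\mathcal{F})$ says this union is $X$, hence $\mathcal{G}(U)=X$; since $\mathcal{G}$ is a semigroup the defining union for $(X,\mathcal{G})$ collapses to $\mathcal{G}(U)$, so it too equals $X$.

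For \emph{minimality} the key step is to show that the $+$ invariant subsets of $(X,\mathcal{F})$ and of $(X,\mathcal{G})$ coincide. One inclusion is immediate from $\mathcal{F}\subseteq\mathcal{G}$. For the other, suppose $A$ is $\mathcal{F}$-$+$invariant, i.e. $f(A)\subseteq A$ for every $f\in\mathcal{F}$; then for any $g=f_n\circ\cdots\circ f_1\in\mathcal{G}$ a short induction on $n$ gives $g(A)\subseteq A$, so $A$ is $\mathcal{G}$-$+$invariant. Since the notions of closed, proper, nonempty, $+$ invariant set agree for the two systems, the absence of such a set for $(X,\mathcal{F})$ is equivalent to its absence for $(X,\mathcal{G})$; in particular minimality of $(X,\mathcal{F})$ yields minimality of $(X,\mathcal{G})$.

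There is no real obstacle here: every step is a direct unwinding of the definitions once the identification $\mathcal{G}=\{\,f_n\circ\cdots\circ f_1:n\in\mathbb{N},\ f_i\in\mathcal{F}\,\}$ is in place. The only points requiring a little care are the use of the semigroup property to collapse nested $\mathcal{G}$-compositions back into single elements of $\mathcal{G}$ (so that the orbit-style and composition-style formulations of transitivity agree) and the inductive verification that $\mathcal{F}$-$+$invariance propagates to all finite compositions. Both are elementary, consistent with the paper's remark that these results are straightforward.
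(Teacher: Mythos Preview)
Your proposal is correct and is precisely the kind of direct unwinding of the definitions that the paper has in mind; indeed the paper omits the proof entirely, declaring it ``straight forward,'' and your argument---identifying $\mathcal{G}$ with the set of finite $\mathcal{F}$-compositions, collapsing $\mathcal{G}$-compositions via the semigroup property, and checking that $+$ invariance under $\mathcal{F}$ and under $\mathcal{G}$ coincide---is exactly the routine verification that justifies that remark.
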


\begin{theorem}
	Let $(X, f_{1,\infty})$ be an NDDS and $\mathcal{F}=\{f_{\alpha}: \alpha\in \sum\}$. If the NDDS $(X, f_{1,\infty})$ is topologically transitiive, strongly transitive or extended minimal then $(X,\mathcal{F})$ satisfies the corresponding property.
\end{theorem}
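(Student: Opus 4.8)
The plan is to exploit two elementary features of the family $\mathcal{F}=\{f_\alpha:\alpha\in\sum\}$: it is closed under composition, and it already contains every iterate $f_1^n$ of the non-autonomous system. First I would record that $\mathcal{F}$ is a semigroup, since for words $\alpha=(n_1,\dots,n_p)$ and $\beta=(m_1,\dots,m_q)$ one has $f_\alpha\circ f_\beta=f_\gamma$ with $\gamma=(m_1,\dots,m_q,n_1,\dots,n_p)\in\sum$. Consequently, as noted in the excerpt for the semigroup case, $\mathcal{F}(x)=O_{\mathcal{F}}(x)=J(x)$ for every $x\in X$, and for any opene $U$ the generic-orbit union $\bigcup_{k\in\mathbb{N},\,g_1,\dots,g_k\in\mathcal{F}}g_k\circ\cdots\circ g_1(U)$ collapses to $\mathcal{F}(U)=\bigcup_{\alpha\in\sum}f_\alpha(U)$. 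Secondly, the word $(1,2,\dots,n)$ satisfies $f_{(1,2,\dots,n)}=f_1^n$, so $\{f_1^n:n\in\mathbb{N}\}\subset\mathcal{F}$, whence $N(U,V)\subset N_e(U,V)$ and $O(x,f_{1,\infty})\subset J(x)$.

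With these observations in hand, each implication is a short unwinding of the definitions. If $(X,f_{1,\infty})$ is topologically transitive, then for every opene pair $U,V$ there is some $n$ with $f_1^n(U)\cap V\neq\emptyset$; since $f_1^n\in\mathcal{F}$, this single map witnesses topological transitivity of $(X,\mathcal{F})$ (take the one-term composition). If $(X,f_{1,\infty})$ is strongly transitive, then $\bigcup_{n}f_1^n(U)=X$ for every opene $U$; as $\{f_1^n:n\in\mathbb{N}\}\subset\mathcal{F}$ this forces $\mathcal{F}(U)\supseteq X$, hence $\mathcal{F}(U)=X$, which by the semigroup collapse above is precisely strong transitivity of $(X,\mathcal{F})$.

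For the minimality statement I would first verify that a subset $A\subset X$ is strong $+$ invariant for the NDDS ($f_n(A)\subset A$ for all $n$) if and only if it is $+$ invariant for the GDS ($\mathcal{F}(A)\subset A$): one direction restricts to single-letter words, the other iterates $f_n(A)\subset A$ along an arbitrary word $\alpha$. Hence the proper, nonempty, closed strong $+$ invariant sets of the NDDS are exactly the proper, nonempty, closed $+$ invariant sets of $(X,\mathcal{F})$, so extended minimality of the NDDS is verbatim minimality of $(X,\mathcal{F})$.

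The only point demanding care is this matching of invariance notions in the third part: it is tempting to identify $+$ invariance of $(X,\mathcal{F})$ with $+$ invariance in the NDDS sense ($f_1^n(A)\subset A$), but because $\mathcal{F}$ is generated by the individual maps $f_n$ rather than by the iterates $f_1^n$, it is the \emph{strong} $+$ invariant sets that correspond. Everything else follows immediately from the inclusion $\{f_1^n:n\in\mathbb{N}\}\subset\mathcal{F}$ together with the semigroup property, so no genuine obstacle arises.
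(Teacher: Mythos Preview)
Your argument is correct and is precisely the straightforward verification the paper has in mind: the theorem is stated without proof (the authors write that it is ``straight forward and we omit the details''), and the obvious route is exactly the one you take, namely exploiting that $\mathcal{F}=\{f_\alpha:\alpha\in\sum\}$ is a semigroup containing each $f_1^n$, together with the identification of strong $+$ invariance in the NDDS with $+$ invariance in the GDS for the minimality clause.
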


\section{Conclusions}
We studied the variations of topological transitiviy in NDDS and GDS. In the case of NDDS, semiconjugacy preserves topological transitivity, strong transitivity, very strong transitivity, exact transitivity, strong exact transitivity, exact and locally eventually onto. In the case of GDS, topological transitivity, strong transitivity are preserved under semiconjugacy and very strong transitivity, topological mixing, locally eventually onto are preserved under strong semiconjugacy. The following is the schematic representation of implications among the notions of transitivity for non-autonomous discrete dynamical systems:\\

\hspace*{1.3cm}	LEO \hspace*{.5cm}$\implies$ \hspace*{.5cm}VST\hspace*{.5cm} $\implies$ 
\hspace*{.5cm}ST \hspace*{.5cm}$\implies$ \hspace*{.5cm}TT\\
\hspace*{8.1cm} $\big\Downarrow$ \hspace*{2.1cm} $\big\Downarrow$\\
\hspace*{5.9cm} Strong Extended  $\implies$  Extended\\ 
\hspace*{6.7cm}Transitivity \hspace*{1cm} Transitivity.\\
If each $f_n$ is surjective, we have\\

\hspace*{2cm}LEO \hspace*{3cm} $\implies$ \hspace*{3cm}TM \\
\hspace*{2.9cm}$\big\Downarrow$ \hspace*{7.7cm}$\big\Downarrow$ \\
\hspace*{.3cm}Strong Exact Transitivity \hspace*{0cm} $\implies$ \hspace*{0cm} Exact Transitivity \hspace*{0cm} $\implies$\hspace*{0cm} TT.

\section*{Acknowledgements}
The second author would like to thank UGC, INDIA for providing fellowship under NFOBC scheme [Ref. NO.:221610064995]. 

\section{References}

\end{document}